\theoremstyle{plain}
\newtheorem{thm}{Theorem}[section]
\newtheorem{prop}[thm]{Proposition}
\newtheorem{cor}[thm]{Corollary}
\newtheorem{lem}[thm]{Lemma}
\newtheorem{fact}[thm]{Fact}
\newtheorem{conj}[thm]{Conjecture}
\newcommand{\auto}{\tag{\addtocounter{equation}{1}\theequation\;{\tiny\faCar}}}
\theoremstyle{definition}
\newtheorem{defn}[thm]{Definition}
\theoremstyle{remark}
\newtheorem{rem}[thm]{Remark}
\numberwithin{equation}{section}
\title[Sharp isoperimetric inequalities]{Sharp isoperimetric inequalities on the Hamming cube near the critical exponent}
\author[P. Durcik]{Polona Durcik}
\address{Chapman University, Orange, CA, USA}
\email{durcik@chapman.edu}
\author[P. Ivanisvili]{Paata Ivanisvili}
\address{University of California Irvine, Irvine, CA, USA}
\email{pivanisv@uci.edu}
\author[J. Roos]{Joris Roos}
\address{University of Massachusetts Lowell\\ Lowell, MA, USA}
\email{joris\_roos@uml.edu}
\date{\today}
\begin{document}
\begin{abstract}
An isoperimetric inequality on the Hamming cube for exponents $\beta\ge 0.50057$ is proved, achieving equality on any subcube. This was previously known for $\beta\ge \log_2(3/2)\approx 0.585$.
Improved bounds are also obtained at the critical exponent $\beta=0.5$, including a bound that is asymptotically sharp for small subsets.
A key ingredient is a
new Bellman-type function involving the Gaussian isoperimetric profile which appears to be a good approximation of the true envelope function.
Verification uses computer-assisted proofs and interval arithmetic.
Applications include progress towards a conjecture of Kahn and Park as well as sharp Poincar\'e inequalities for Boolean-valued functions near $L^1$.
\end{abstract}

\subjclass[2020]{46B09, 60E15, 05C35, 65G30}
\keywords{Hamming cube, isoperimetric inequality, Bellman functions, computer-assisted proofs}

\maketitle

\section{Introduction}
Let $n\ge 1$ be an integer. For a subset $A$ of the Hamming cube $\{0,1\}^n$ and a vertex $x\in A$ let
$h_A(x)$ be the number of edges connecting $x$ with the complement of $A$ and
let $h_A(x)=0$ if $x\not\in A$.
One of our main results is the following isoperimetric inequality.

\begin{thm}\label{thm:mainisoperim}
For all $\beta\ge \beta_0=0.50057$ and $A\subset \{0,1\}^n$ with $|A|\le \frac12$, \begin{equation}\label{eqn:isoperimhalf}
\mathbf{E} h_A^\beta \ge |A| (\log_2(1/|A|))^\beta.
\end{equation}
This is an equality if $A$ is a subcube.
\end{thm}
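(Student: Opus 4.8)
The equality claim is immediate: a subcube of codimension $k$ has measure $2^{-k}$ and each of its vertices has exactly $k$ neighbours outside it, so both sides of \eqref{eqn:isoperimhalf} equal $2^{-k}k^{\beta}$. For the inequality, the plan is to prove a stronger, relaxed statement of \emph{Bobkov type} by induction on the dimension $n$. One fixes a function $B\colon[0,1]\to[0,\infty)$ — the new Bellman-type function — together with an integrand $\Psi_{\beta}$ normalised so that $\Psi_{\beta}(a,0)=B(a)$ (so constant functions give equality) and $\Psi_{\beta}(\mathbf{1}_A(x),\nabla\mathbf{1}_A(x))=h_A(x)^{\beta}$ (so indicators recover $\mathbf{E}h_A^{\beta}$), and one proves
\[
\mathbf{E}_n\,\Psi_{\beta}\big(f(x),\nabla f(x)\big)\ \ge\ B\big(\mathbf{E}_n f\big)\qquad\text{for every }f\colon\{0,1\}^n\to[0,1].
\]
Applying this with $f=\mathbf{1}_A$ gives $\mathbf{E}h_A^{\beta}\ge B(|A|)$, so Theorem~\ref{thm:mainisoperim} follows once one also has the pointwise bound $B(t)\ge t(\log_2(1/t))^{\beta}$ on $(0,\tfrac12]$, which is forced to be an equality at every dyadic $t=2^{-k}$ by the subcube.

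The induction runs as in Bobkov's proof of the Gaussian isoperimetric inequality. Splitting off the last coordinate, $\{0,1\}^n=\{0,1\}^{n-1}\times\{0,1\}$, and writing $A_0,A_1\subset\{0,1\}^{n-1}$ for the two slices of $A$, one has $h_A(y,\varepsilon)=h_{A_{\varepsilon}}(y)+\mathbf{1}[y\notin A_{1-\varepsilon}]$ for $(y,\varepsilon)\in A$, so that
\[
\mathbf{E}h_A^{\beta}=\tfrac12\,\mathbf{E}_{n-1}\Big[\mathbf{1}_{A_0}\big(h_{A_0}+\mathbf{1}_{\overline{A_1}}\big)^{\beta}+\mathbf{1}_{A_1}\big(h_{A_1}+\mathbf{1}_{\overline{A_0}}\big)^{\beta}\Big].
\]
The coupling between the slices through $\mathbf{1}_{\overline{A_1}}$ and $\mathbf{1}_{\overline{A_0}}$ — which, on indicators, is exactly the last-coordinate component of $\nabla f$ — is what makes a slice-by-slice application of a one-variable bound lose too much, and it is what the extra argument $v=\nabla f$ in $\Psi_{\beta}$ is built to carry. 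Granting the functional inequality in dimension $n-1$ and applying it along the last coordinate, the inductive step reduces, by the usual tensorization (which uses convexity and monotonicity properties of $\Psi_{\beta}$), to its one-dimensional instance: a \emph{two-point inequality} relating $B$ at a midpoint to $\Psi_{\beta}$ at the two endpoints of an edge. This two-point inequality is the ``main inequality'' of the Bellman scheme, and from here everything is about choosing $B$ so that it holds for $\beta$ as small as possible while keeping $B(t)\ge t(\log_2(1/t))^{\beta}$.

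The previously known threshold $\beta\ge\log_2(3/2)$ is essentially the range in which one can take $B$ to be the target function $t\mapsto t(\log_2(1/t))^{\beta}$ itself; below it a genuinely different $B$ is needed, and the key idea is to build $B$ from the Gaussian isoperimetric profile $\mathcal I=\varphi\circ\Phi^{-1}$ — heuristically, $B$ follows a curve shaped by $\mathcal I$ through the nodes $(2^{-k},2^{-k}k^{\beta})$ — which is ``flat enough'' for the two-point inequality to survive down to $\beta_0=0.50057$ yet still lies above the target off the dyadic points. With such a $B$ fixed one must verify (i) the base of the induction (a direct check in low dimensions), (ii) the two-point inequality, and (iii) the domination $B(t)\ge t(\log_2(1/t))^{\beta}$ on $(0,\tfrac12]$. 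Since $B$ is transcendental — it involves $\Phi^{-1}$ — the verifications (ii) and (iii) are carried out by a rigorous computer-assisted argument: the parameter domain is partitioned into small cells, $B$ together with the derivatives entering the estimates are enclosed on each cell by interval arithmetic, and the inequality is certified cell by cell. The main obstacle is precisely the construction of $B$: it must simultaneously dominate $t(\log_2(1/t))^{\beta}$, be admissible for the two-point induction at an exponent this close to $\tfrac12$, and be explicit enough to be handled rigorously — the tension between the first two requirements is what pins down $\beta_0$ — while, on the computational side, pushing the interval-arithmetic verification through uniformly, especially near $t=0$ (where $\Phi^{-1}$ blows up) and near the dyadic nodes (where the inequalities are tight), is the technical core of the proof.
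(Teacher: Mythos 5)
Your high-level template --- reduce to a Bellman-type two-point inequality for a cleverly chosen $B$, then certify that inequality by interval arithmetic --- does match the paper's strategy, and your equality check for subcubes is correct. But two substantive points in your sketch are off in ways that matter.

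First, the induction scheme you describe is a Bobkov--G\"otze-type functional inequality $\mathbf{E}\,\Psi_\beta(f,\nabla f)\ge B(\mathbf{E}f)$ for all $f:\{0,1\}^n\to[0,1]$, and the associated two-point inequality is only $G^1_\beta[B]\ge 0$, i.e. $((y-x)^{1/\beta}+B(y)^{1/\beta})^\beta + B(x)\ge 2B(\tfrac{x+y}{2})$. The paper instead uses the Kahn--Park refinement (Proposition \ref{prop:kahnpark}), which is a set-level induction and demands only $\max(G^1_\beta[B],G^2_\beta[B])\ge 0$, with $G^2_\beta[B](x,y)=y-x+(2^\beta-1)B(y)+B(x)-2B(\tfrac{x+y}{2})$. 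That extra $G^2$ term does \emph{not} arise from a Bobkov-type $\Psi_\beta$-functional inequality, and the paper stresses (and uses, e.g. throughout Case $LJQ$ in \S\ref{sec:caseLJQ}) that it is crucial: there are regions where $G^1$ is at best zero or negative while $G^2$ saves the day. The Bobkov--G\"otze scheme you outline is exactly what the paper uses for the much weaker asymptotic estimate of Theorem \ref{thm:asymp}, which at $|A|=\tfrac12$ gives only $\mathcal{B}_{1/2}(\tfrac12)\ge 0.207$ --- far short of what is needed. So as written, your induction step is too weak to yield Theorem \ref{thm:mainisoperim}.

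Second, your description of the Bellman function is essentially backwards. The Gaussian profile $I=\varphi\circ\Phi^{-1}$ does not shape $B$ near $t=0$ or ``through the dyadic nodes''; the paper's $b_\beta$ equals the target $L_\beta(t)=t(\log_2(1/t))^\beta$ exactly on $[0,\tfrac14]$, is a cubic interpolant $Q_\beta$ on $[\tfrac14,\tfrac12]$, and only on $[\tfrac12,1]$ switches to $J$, a rescaling of the Gaussian profile determined by $J''J=-2$, $J(\tfrac12)=\tfrac12$, $J(1)=0$. Consequently there is no issue with $\Phi^{-1}$ blowing up near $t=0$, and the pointwise domination $b_\beta\ge L_\beta$ on $(0,\tfrac12]$ is just $Q_\beta\ge L_\beta$ on $[\tfrac14,\tfrac12]$ (Lemma \ref{lem:LQmax}). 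The constant $\beta_0=0.50057$ is pinned down by a different mechanism than the ``tension'' you describe: it is the failure of the one-sided Bobkov two-point inequality for $J$ when the left endpoint drops below $x_0=1-w_0/2\approx 0.553$ (Proposition \ref{prop:twoptbobkov}, Remark \ref{rem:Jbobkovtwoptfailure}), which forces either $\beta>\tfrac12$ or a constant $c_0<1$ in the critical Case $J$.I of \S\ref{sec:J-I}.
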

Here $\mathbf{E}f=2^{-n} \sum_{x\in \{0,1\}^n} f(x)$ and $|A|=\mathbf{E} \mathbf{1}_A$.
If \eqref{eqn:isoperimhalf} holds for some $\beta$, then it follows for all $\beta'\ge \beta$ (by H\"older's inequality, see Lemma \ref{lem:holder} below). The value $\beta_0=0.50057$ is essentially optimal for our argument; the failure for smaller values of $\beta\ge 0.5$ is rooted in the failure of a certain variant of Bobkov's inequality for the Gaussian isoperimetric profile
(see Remark \ref{rem:beta0explanation} and \ref{rem:Jbobkovtwoptfailure} below).

\subsection{Motivation, history and further results}
The quantity \[\mathbf{E} h_A^\beta\] should be interpreted as arising from a natural interpolation between vertex boundary measure ($\beta=0$) and edge boundary measure ($\beta=1$) which are classical quantities that are well-understood on the Hamming cube (Harper \cite{Har66}, Bernstein \cite{Ber67}, Hart \cite{Hart76}; a very nice exposition can be found e.g. in Bollob\'as \cite[\S 16]{Bol86}).

Talagrand \cite{Tal93} has initiated the study of $\mathbf{E} h_A^\beta$ for other values of $\beta$ and proved non-trivial dimension-free lower bounds when $\beta=\frac12$. This is a natural and surprising result, since Hamming ball examples show that no such bounds can exist if $\beta<\frac12$ (see e.g. \cite[\S 3]{BIM23}).
However, Talagrand's bounds are not sharp and while progress has been made (e.g. Bobkov--G\"otze \cite{BG99}, Kahn--Park \cite{KP20}, Beltran--Ivanisvili--Madrid \cite{BIM23}), no sharp lower bound is currently known for the exponent $\beta=1/2$.

In this paper we are interested in the minimum possible value of the quantity $\mathbf{E} h_A^\beta$ when $|A|$ is fixed and $\beta\in [\frac12, 1]$.
\begin{defn}\label{defn:isoperimprofile}
The \emph{$\beta$-isoperimetric profile} $\mathcal{B}_\beta$ for the Hamming cube is
\begin{equation}
\label{kideverti}
\mathcal{B}_\beta(x) = \inf_{n\ge 1} \inf_{\substack{A\subset\{0,1\}^n},\\|A|=x} \mathbf{E} h_A^\beta.
\end{equation}
\end{defn}
\begin{figure}[hb]
\includegraphics[width=12cm]{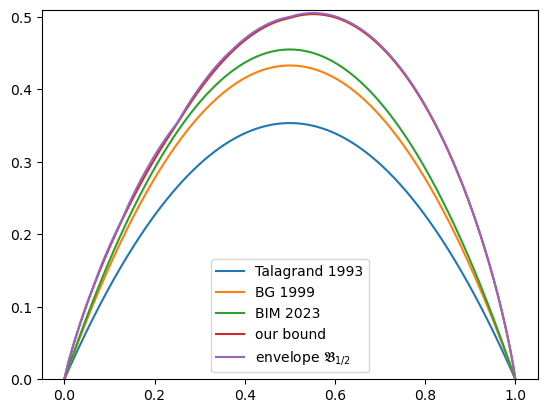}
\caption{Comparison of lower bounds for $\mathcal{B}_{\frac12}$.} \label{fig:boundcompare}
\end{figure}
The definition is restricted to \emph{dyadic rationals} $x$, i.e.
\[ x\in \mathcal{Q}=\{ k 2^{-n}\,:\,n\ge 0, 0\le k\le 2^n \}\subset [0,1],\]
since otherwise there exists no $A$ with $|A|=x$.
For $\beta=1$ the precise value is known for every $x=k2^{-n}\in\mathcal{Q}$:
\begin{equation}\label{eqn:hartfractal} \mathcal{B}_1(x) = x \Big(n - \frac{2}{k}\sum_{j=1}^{k-1} s(j)\Big),
\end{equation}
where $s(j)$ is the sum of binary digits of $j$ (Hart \cite{Hart76}).
The graph of $\mathcal{B}_1$ is fractal and there is numerical evidence to believe that this is also the case for $\mathcal{B}_\beta$ with $\beta\in (\frac12,1)$ (see \S \ref{sec:computedenvelopes}, in particular Figure \ref{fig:envelopespectrum}). For $\beta=1$ it is not difficult to see that minimizing $\mathbf{E} h_A$ with $|A|$ fixed is equivalent to maximizing the number of edges of $A$ (as an induced subgraph). This no longer holds when $\beta<1$ and no characterization of extremizers is known for any value $\beta<1$.
The example of a codimension $k$ subcube
\[ A = \{x\in \{0,1\}^n\,:\,x_1=\cdots=x_k=0\} \]
shows that
\[ \mathcal{B}_\beta(2^{-k})\le 2^{-k} k^\beta. \]
Subcubes are extremizers in (\ref{kideverti}) for $\beta=1$, and it is believed that they will remain extremizers for other values of $\beta$ as well.
\begin{conj}\label{gipoteza}
For all $\beta\ge \frac12$ and all $k\ge 1$,
\[ \mathcal{B}_\beta(2^{-k})= 2^{-k} k^\beta. \]
\end{conj}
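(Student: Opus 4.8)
The plan splits according to the threshold $\beta_0=0.50057$ of Theorem~\ref{thm:mainisoperim}. Since Conjecture~\ref{gipoteza} concerns only the dyadic points $x=2^{-k}$, the upper bound $\mathcal B_\beta(2^{-k})\le 2^{-k}k^\beta$ is the subcube example recorded above, so in every case the content is the matching lower bound: $\mathbf{E} h_A^\beta\ge 2^{-k}k^\beta$ whenever $|A|=2^{-k}$. For $\beta\ge\beta_0$ this is immediate, since then $|A|=2^{-k}\le\tfrac12$ and $\log_2(1/|A|)=k$, so Theorem~\ref{thm:mainisoperim} gives $\mathbf{E} h_A^\beta\ge 2^{-k}k^\beta$ at once. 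Thus only the range $\beta\in[\tfrac12,\beta_0)$ remains, and one should note that H\"older's inequality (Lemma~\ref{lem:holder}) runs the wrong way here, propagating the bound from smaller to larger $\beta$, so it cannot transport Theorem~\ref{thm:mainisoperim} down from $\beta_0$ to $\tfrac12$.

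One tempting shortcut: for each fixed $A$ the function $\beta\mapsto\mathbf{E} h_A^\beta=\sum_{m\ge1}c_m m^\beta$, with $c_m=2^{-n}|\{x\in A:h_A(x)=m\}|\ge0$, is log-convex by Cauchy--Schwarz. If the envelope $\beta\mapsto\ln\mathcal B_\beta(2^{-k})$, an infimum of such log-convex functions, were itself convex on $[\tfrac12,\infty)$, then --- being affine, namely $\beta\ln k-k\ln2$, on $[\beta_0,\infty)$ by Theorem~\ref{thm:mainisoperim} --- it would lie above that affine function on $[\tfrac12,\beta_0)$, which is exactly the conjecture. The obstruction is that an infimum of log-convex functions need not be log-convex, so this route requires an independent proof that the envelope is convex in $\beta$, which seems no easier than the conjecture itself.

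Failing that, I would refine the Bellman-function machinery behind Theorem~\ref{thm:mainisoperim}. Splitting $\{0,1\}^{n}$ along the first coordinate writes $A=(\{0\}\times A_0)\cup(\{1\}\times A_1)$ with $h_A(x)=h_{A_i}(x)+\mathbf{1}[x\notin A_{1-i}]$ for $x$ in the slice $A_i$, hence
\[
\mathbf{E} h_A^\beta=\tfrac12\,\mathbf{E}\big[(h_{A_0}+\mathbf{1}_{A_0\setminus A_1})^\beta\big]+\tfrac12\,\mathbf{E}\big[(h_{A_1}+\mathbf{1}_{A_1\setminus A_0})^\beta\big].
\]
One seeks a Bellman function $\mathfrak B_\beta$, built from the Gaussian isoperimetric profile $I=\varphi\circ\Phi^{-1}$, with $\mathfrak B_\beta(0)=\mathfrak B_\beta(1)=0$ and $\mathfrak B_\beta(2^{-k})=2^{-k}k^\beta$, for which this recursion, combined with a compression step that pins down the overlap term, yields an induction on dimension --- reducing the whole problem to a single two-point ``main inequality'' for $\mathfrak B_\beta$, a variant of Bobkov's inequality for $I$. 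The profile $I$ is the natural building block because $I(x)/x\sim\sqrt{2\ln(1/x)}$ as $x\to0^+$, so $x\,(I(x)/x)^{2\beta}$ already matches the conjectured boundary value $x(\log_2(1/x))^\beta$ up to a multiplicative constant and a suitably rescaled $I$-based ansatz is essentially the extremal envelope; the number $\beta_0$ is, by design, precisely where the two-point inequality for this ansatz first fails.

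To push below $\beta_0$ one must exploit that Conjecture~\ref{gipoteza} demands tightness only at dyadic points, whereas Theorem~\ref{thm:mainisoperim} would force $\mathfrak B_\beta$ to equal $L_\beta(x):=x(\log_2(1/x))^\beta$ on all of $(0,\tfrac12]$, while the true profile $\mathcal B_\beta$ is believed to be fractal and to lie strictly above $L_\beta$ off the dyadic points --- as it provably does for $\beta=1$ by Hart's formula \eqref{eqn:hartfractal}. A promising avenue is therefore to look for a \emph{self-similar} Bellman function obeying a Hart-type functional equation, still equal to $2^{-k}k^\beta$ at $x=2^{-k}$ but allowed to bulge above $L_\beta$ in between, so that the extra room restores what the two-point inequality loses below $\beta_0$. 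I expect essentially all of the difficulty to lie in verifying the resulting self-referential main inequality all the way down to $\beta=\tfrac12$ --- ideally by interval arithmetic, as in the rest of the paper: at the critical exponent the inequality degenerates, which is the same phenomenon that makes Talagrand's $\beta=\tfrac12$ bound delicate and still unsharp, and the purely combinatorial alternative of compressing $A$ to a canonical form stalls because, unlike the $\beta=1$ edge-boundary case, fully compressed sets of density $2^{-k}$ need not be subcubes and no characterization of extremizers is known.
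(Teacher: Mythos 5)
This statement is labelled \emph{Conjecture}, not theorem, and the paper does not prove it: the text immediately after the conjecture says only that Theorem~\ref{thm:mainisoperim} establishes it for $\beta\ge\beta_0=0.50057$, leaving the range $\beta\in[\tfrac12,\beta_0)$ open. So there is no ``paper's own proof'' to compare against. Your reduction for $\beta\ge\beta_0$ is correct and is exactly what the paper does --- the upper bound from a codimension-$k$ subcube plus the lower bound from Theorem~\ref{thm:mainisoperim} with $|A|=2^{-k}\le\tfrac12$, $\log_2(1/|A|)=k$ --- and your observation that Lemma~\ref{lem:holder} only transports bounds from smaller $\beta$ to larger $\beta$ is also correct and is why that case is not automatic from $\beta=\tfrac12$.

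For $\beta\in[\tfrac12,\beta_0)$ you correctly do not claim a proof. Two small comments on your heuristics. First, the log-convexity shortcut: you are right that it would close the gap, since a convex function of $\beta$ that agrees with an affine function on $[\beta_0,\infty)$ must lie above that affine function on $[\tfrac12,\beta_0)$, and the subcube example already gives the matching upper bound; but, as you say, an infimum (here over $n$ and $A$) of log-convex functions has no reason to stay log-convex, and you have supplied no independent argument for convexity of $\beta\mapsto\ln\mathcal B_\beta(2^{-k})$. Second, the Bellman/fractal avenue: this is in the spirit of \S\ref{sec:computedenvelopes} and Remark~\ref{rem:beta0explanation}, but the paper gives a specific reason it stalls at $\beta_0$ --- Remark~\ref{rem:Jbobkovtwoptfailure} traces the failure to Proposition~\ref{prop:twoptbobkov} requiring $I'\le0$, equivalently to $J_w'$ changing sign at $1-\tfrac{w}{2}$ --- so ``more room between dyadic points'' alone will not help unless the ansatz also repairs that two-point failure near $x=\tfrac12$. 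In short: your $\beta\ge\beta_0$ argument matches the paper, and your treatment of $\beta<\beta_0$ is an honest research sketch of an open problem, not a proof; the paper claims no more either.
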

Theorem \ref{thm:mainisoperim} proves the conjecture for $\beta\ge 0.50057$.
This was previously shown to hold for $\beta\ge \log_2(\frac32)\approx 0.585$ and $k=1,2$ by Kahn and Park \cite{KP20} and then for $\beta\ge \log_2(\frac32)$ and all $k\ge 1$ by Beltran, Ivanisvili and Madrid \cite{BIM23} (also for $k=1$ when $\beta\ge 0.53$).

\subsection{Results at the critical exponent}
At the critical exponent $\beta=\frac12$ we obtain
improved estimates for the value $\mathcal{B}_\frac12(\frac12)\le \frac12$.
Talagrand \cite{Tal93} proved that
\[ \mathcal{B}_{\frac12}(x) \ge C x(1-x) \]
for all $x\in\mathcal{Q}$ with $C=\sqrt{2}$.
This has been improved subsequently by Bobkov--G\"otze \cite{BG99} ($C=\sqrt{3}$) and in \cite[Cor. 1.7]{BIM23} ($C=2\sqrt{2^{\frac32}-2}$).
The later implies $\mathcal{B}_{\frac12}(\frac12)\ge \frac12 \sqrt{2^{\frac32}-2}\approx 0.455$.
The best possible bound of the form $Cx(1-x)$ is achieved when $C=\frac43 \sqrt{2}$ since for larger values of $C$ the function would contradict the upper bound $\mathcal{B}_{\frac12}(\frac14)\le \frac14 \sqrt{2}$. This would only give $\mathcal{B}_{\frac12}(\frac12)\ge \frac13 \sqrt{2}\approx 0.471$. We break this barrier by a significant margin, coming to within $0.3\%$ of the conjectured value.
\begin{thm}\label{thm:onehalf}
For all $k\ge 1$,
\[ 2^{-k} \sqrt{k}\ge \mathcal{B}_{\frac12}(2^{-k})\ge 0.997\cdot 2^{-k} \sqrt{k}. \]
In particular,
\[ 0.5\ge \mathcal{B}_{\frac12}(\tfrac12)\ge 0.4985. \]
\end{thm}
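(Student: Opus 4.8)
The upper bound $\mathcal B_{\frac12}(2^{-k})\le 2^{-k}\sqrt k$ is the codimension-$k$ subcube example recorded above (and at $k=1$ it gives $\mathcal B_{\frac12}(\tfrac12)\le\tfrac12$), so I will concentrate on the lower bound $\mathcal B_{\frac12}(2^{-k})\ge 0.997\cdot 2^{-k}\sqrt k$.

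The plan is to exhibit an explicit lower envelope for $\mathcal B_{\frac12}$: a function $B:[0,1]\to[0,\infty)$ built out of the Gaussian isoperimetric profile $I(x)=\varphi(\Phi^{-1}(x))$ --- the ``new Bellman-type function'' of the paper --- with two properties: (i) $\mathbf E_n h_A^{1/2}\ge B(|A|)$ for every $n\ge1$ and every $A\subset\{0,1\}^n$, so that $\mathcal B_{\frac12}\ge B$ on $\mathcal Q$; and (ii) $B(2^{-k})\ge 0.997\cdot 2^{-k}\sqrt k$ for all $k\ge1$. For (i) I would argue by induction on $n$ using the last-coordinate slicing $\{0,1\}^n=\{0,1\}^{n-1}\times\{0,1\}$: if $A_0,A_1\subset\{0,1\}^{n-1}$ are the two slices of $A$, with $|A_0|=a$, $|A_1|=b$ and $|A|=\tfrac{a+b}{2}$, then
\[ \mathbf E_n h_A^{1/2}=\tfrac12\,\mathbf E_{n-1}\bigl(h_{A_0}+\mathbf 1_{A_0\setminus A_1}\bigr)^{1/2}+\tfrac12\,\mathbf E_{n-1}\bigl(h_{A_1}+\mathbf 1_{A_1\setminus A_0}\bigr)^{1/2}. \]
The shift terms $\mathbf 1_{A_i\setminus A_{1-i}}$ prevent a naive induction from closing, so I would strengthen the inductive hypothesis to a statement that additionally tracks the ``virtual edges'' accumulated along the slicing, and choose $B$ precisely so that the associated two-point inequality --- the variant of Bobkov's inequality for the Gaussian profile referred to in the introduction --- holds. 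Since that inequality involves $\Phi^{-1}$ it is transcendental, so I would discharge it by reducing to a compact parameter region, subdividing it into finitely many boxes, and certifying each box by interval arithmetic, treating the singular endpoints of $I$ near $0$ and $1$ by separate explicit estimates.

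For (ii) I would evaluate $B(2^{-k})$ by interval arithmetic for all $k$ up to an explicit threshold $k_0$ and check $B(2^{-k})\ge 0.997\cdot 2^{-k}\sqrt k$ directly, the worst case occurring at some small $k$ (and at $k=1$ this is exactly $B(\tfrac12)\ge 0.4985$). For $k>k_0$ I would instead invoke an effective version of the asymptotics $I(x)=x\sqrt{2\log(1/x)}\,(1+o(1))$ as $x\to0$, with an explicit remainder, to obtain $B(2^{-k})=2^{-k}\sqrt k\,(1+o(1))$ and, quantitatively, $B(2^{-k})>0.997\cdot 2^{-k}\sqrt k$ once $k>k_0$ --- so that the bound is in fact asymptotically sharp for small subsets. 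Chaining (i), (ii) and the subcube upper bound yields $2^{-k}\sqrt k\ge\mathcal B_{\frac12}(2^{-k})\ge 0.997\cdot 2^{-k}\sqrt k$, and $k=1$ gives the displayed estimate at $\tfrac12$.

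The main obstacle is step (i): engineering the strengthened inductive hypothesis so that one slicing step reduces to a single two-point inequality for $B$, and then turning that transcendental inequality into a finite, rigorously certified interval-arithmetic computation --- with provably correct handling of the degenerate endpoints of the Gaussian profile --- while also making sure the finite verification at small $k$ in step (ii) meshes cleanly with the effective large-$k$ asymptotics so that no value of $k$ is left uncovered.
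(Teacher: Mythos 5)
Your overall skeleton --- pick a Bellman-type envelope $B$, verify a two-point inequality by interval arithmetic, run the slicing induction to get $\mathcal{B}_{1/2}\ge B$, and evaluate at the dyadics $2^{-k}$ --- is indeed the paper's strategy, and the step you flag as hard (closing the induction via a two-point inequality) is where the paper's work goes. But the $B$ you propose cannot work, for a structural reason. The paper's envelope $b_{1/2}$ in \eqref{eqn:bbdef} is \emph{not} built from the Gaussian isoperimetric profile on $[0,\tfrac12]$: it is $L_{1/2}(x)=x\sqrt{\log_2(1/x)}$ on $[0,\tfrac14]$ and the cubic interpolant $Q_{1/2}$ on $[\tfrac14,\tfrac12]$; the rescaled Gaussian profile $J$ enters only on $[\tfrac12,1]$. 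This split is forced: the one-sided Bobkov inequality $G^1_{1/2}[B]\ge 0$ (the version that closes the Hamming-cube induction, as opposed to Bobkov's symmetric two-sided one) holds for a function with $B\cdot B''=-2$ only where $B$ is \emph{decreasing} (Proposition \ref{prop:twoptbobkov}), and fails where it is increasing (Remark \ref{rem:Jbobkovtwoptfailure}). Any $B$ modelled on $I$ near $0$ is increasing there, so your step (i) breaks precisely in the region containing the points $2^{-k}$, $k\ge 2$. Even setting the induction aside, a constant multiple $cI$ gives $B(2^{-k})\sim c\sqrt{2\log 2}\cdot 2^{-k}\sqrt k$ as $k\to\infty$ but $B(\tfrac12)=c/\sqrt{2\pi}\approx 0.40c$ at $k=1$; no $c$ makes both of these sit in the required window $[0.997\cdot 2^{-k}\sqrt k,\ 2^{-k}\sqrt k]$.

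Your step (ii) also misattributes where the $0.997$ comes from. In the paper, $b_{1/2}(2^{-k})=2^{-k}\sqrt k$ \emph{exactly} for every $k\ge1$: for $k\ge2$ this is $L_{1/2}(2^{-k})$, and for $k=1$ it is $Q_{1/2}(\tfrac12)=\tfrac12$. So once $\mathcal{B}_{1/2}\ge 0.997\cdot b_{1/2}$ is established (this is \eqref{eqn:actualisoperimhalf}, a consequence of the two-point inequality \eqref{eqn:maintwoptonehalf} via Proposition \ref{prop:kahnpark}), evaluating at $2^{-k}$ gives the lower bound instantly for all $k$ --- no threshold $k_0$, no separate small-$k$ interval arithmetic, and no effective asymptotics of $I$ are needed. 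The constant $c_0=0.997$ is a global scaling factor that makes $G_{1/2}[c_0\, b_{1/2}]\ge 0$ hold at all: the unscaled $G_{1/2}[b_{1/2}]\ge 0$ fails because $J'>0$ on $(\tfrac12,x_0)$, and $c_0$ is tuned by the near-diagonal analysis in Case $J$.I of \S\ref{sec:J-I}, not by how well $B(2^{-k})$ approximates $2^{-k}\sqrt k$. The asymptotic sharpness of $\mathcal{B}_{1/2}(x)$ as $x\to0^+$ that you fold into this step is true, but it is a separate result, Theorem \ref{thm:asymp}, proved by a Bobkov--G\"otze induction with $L_{1/2}$ alone, not via this envelope.
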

This is a consequence of a more technical lower bound, see \eqref{eqn:actualisoperimbeta0} below; see Figure \ref{fig:boundcompare} for comparison with previous lower bounds\footnote{Note $\mathfrak{B}_{\beta}$ may be different from $\mathcal{B}_{\beta}$, see Definition \ref{defn:envelope}.}.
While we still do not have any sharp estimates at $\beta=\frac12$ for fixed $x=|A|$, we identify the sharp asymptotic behavior of $\mathcal{B}_{\frac12}$ as $|A|\to 0^+$.

\begin{thm}\label{thm:asymp}
For all $A\subset \{0,1\}^n$,
\begin{equation}\label{eqn:sharpasymp}
\mathbf{E} \sqrt{h_A} \ge |A| \sqrt{\log_2(1/|A|)+1} - |A|
\end{equation}
and as a consequence,
\[ \mathcal{B}_{\frac12}(x) ~\underline{\sim}~ x\sqrt{\log_2(1/x)} \quad\text{as}\;x\to 0^+.\]
\end{thm}
(Here $f(x)~\underline{\sim}~ g(x)$ as $x\to x_*$ means $\liminf_{x\to x_*} \frac{f(x)}{g(x)}=1$.)
The bound \eqref{eqn:sharpasymp} is quite bad unless $|A|$ is very close to $0$ (e.g. at $|A|=\frac12$ it proves only $\mathcal{B}_{\frac12}(\frac12)\ge 0.207$).
However, it features the sharp asymptotic behavior as $|A|\to 0^+$, both in the constant of the leading term and in the power of the logarithm.
This can be compared with the estimate (1.1) of Talagrand \cite{Tal93} which gave a sharp asymptotic in the power of the logarithm, but not the constant. To our knowledge, this is the first sharp estimate proved at the critical exponent $\beta=\frac12$.

We also obtain asymptotics as $|A|\to 1^-$, but we do not know if these are sharp (see Remark \ref{rem:asympnearone}).

\subsection{Classical isoperimetric inequality}
Let us now discuss the case $|A|>\frac12$, which we have omitted in Theorem \ref{thm:mainisoperim}.
The classical isoperimetric inequality states that
\[ \mathbf{E}h_A \ge |A|^* \log_2(1/|A|^*),\]
where $x^*=\min(x,1-x)$, with equality when $A$ is a subcube or complement of a subcube (the inequality follows from \eqref{eqn:hartfractal}).
The symmetry between $|A|\le \frac12$ and $|A|\ge \frac12$ is natural here because $\mathbf{E} h_A=\mathbf{E} h_{A^c}$. However, generally $\mathbf{E} h_A^\beta\not=\mathbf{E} h_{A^c}^\beta$ for $\beta\not=1$ and complements of subcubes are no longer believed to be extremizers.
Nevertheless, we are able to prove an analogue of the classical isoperimetric inequality
\begin{equation}\label{eqn:classicalisoperimbeta} \mathbf{E}h_A^\beta \ge |A|^* (\log_2(1/|A|^*))^\beta,
\end{equation}
for any given $\beta\in [\beta_0,1]$. This was shown to hold for $\beta=\log_2(3/2)$ in \cite{BIM23}. For $|A|>\frac12$ it turns out that this inequality is far from optimal.

Let $J$ be the unique smooth function on $(\frac12,1)$ continuous at the endpoints satisfying $J''J=-2, J(\tfrac12)=\tfrac12, J(1)=0$.
The function $J$ plays a central role in this paper and can be expressed in terms of the Gaussian isoperimetric profile $I(x)$, see \eqref{eqn:JintermsofI} below.
\begin{thm}\label{thm:isoperimJ}
For $\beta=\beta_0=0.50057$ and $A\subset \{0,1\}^n$ with $|A|\ge \frac12$,
\begin{equation}\label{eqn:isoperimJ} \mathbf{E} h_A^{\beta} \ge \max(J(|A|), (1-|A|)(\log_2(1/(1-|A|)))^{\beta})
\end{equation}
\end{thm}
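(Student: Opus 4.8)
The plan is to prove the two entries of the maximum in \eqref{eqn:isoperimJ} separately. The second entry, $(1-|A|)(\log_2(1/(1-|A|)))^{\beta_0}$, is exactly the right‑hand side of the classical‑type inequality \eqref{eqn:classicalisoperimbeta} (since $|A|^*=1-|A|$ when $|A|\ge\tfrac12$), which holds for all $\beta\in[\beta_0,1]$; so the real task is the bound $\mathbf{E}h_A^{\beta_0}\ge J(|A|)$. The first move is to rewrite the left‑hand side in gradient form: since $h_A$ is supported on $A$ and, at a point $x\in A$, equals $\|\nabla\mathbf{1}_A(x)\|^2:=\sum_i(\mathbf{1}_A(x)-\mathbf{1}_A(x\oplus e_i))^2$, we have $\mathbf{E}h_A^{\beta_0}=\mathbf{E}\big[\|\nabla\mathbf{1}_A\|^{2\beta_0}\mathbf{1}_A\big]$, and because the Gaussian isoperimetric profile $I$ vanishes at $0$ and at $1$ this is the ``one‑sided'' Gaussian gradient functional $\mathbf{E}\big[(I(\mathbf{1}_A)^2+\|\nabla\mathbf{1}_A\|^2)^{\beta_0}\mathbf{1}_A\big]$. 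It therefore suffices to establish the following one‑sided analogue of Bobkov's inequality: for every $f:\{0,1\}^n\to\{0,1\}$ with $\mathbf{E}f\ge\tfrac12$,
\[ J(\mathbf{E}f)\ \le\ \mathbf{E}\big[(I(f)^2+\|\nabla f\|^2)^{\beta_0}\,\mathbf{1}_{\{f=1\}}\big]. \]
The defining ODE $J''J=-2$ — equivalently the formula \eqref{eqn:JintermsofI} expressing $J$ through $I$ — is precisely the second‑order condition making this inequality an equality on subcubes, the constant $2$ in place of Bobkov's $1$ accounting for the fact that each boundary edge is now counted from only one side.

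To prove the one‑sided inequality I would run an induction on $n$ in the spirit of Bobkov's tensorization argument. Fixing one coordinate and writing $\mathbf{E}_n=\mathbf{E}_{n-1}\mathbf{E}_1$, one averages over that coordinate first; the function of the remaining $n-1$ coordinates that results is no longer Boolean, so one must carry a second parameter recording the accumulated gradient mass. Thus the induction is really carried out for a two‑variable Bellman function $\mathbb{B}(x,s)$ on a domain $\{(x,s):x\in[0,1],\ 0\le s\le s_{\max}(x)\}$ built from $J$ and $I$, with $\mathbb{B}(\cdot,0)$ equal to $J$ on $[\tfrac12,1]$ and to a suitable substitute on $[0,\tfrac12)$ (supplied by Theorem~\ref{thm:mainisoperim}), so that slices of measure below $\tfrac12$ are handled automatically. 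The base case $n=1$ is the elementary check on $\{0,1\}$ ($J(\tfrac12)=\tfrac12$, $J(1)=0$, together with $J\ge 0$), and the inductive step reduces, after a routine reformulation of the gradient splitting $\|\nabla f\|^2=\|\nabla_{n-1}f\|^2+(\text{cross-edge})^2$, to a single \emph{two‑point inequality} for $\mathbb{B}$: for $a,b$ in the $x$‑range of the domain and the induced $s$‑parameters,
\[ \mathbb{B}\Big(\tfrac{a+b}{2},\cdot\Big)\ \ge\ \tfrac12\,\mathbb{B}(a,\cdot)+\tfrac12\,\mathbb{B}(b,\cdot), \]
the two arguments on the right being adjusted by the contribution of the new cross‑edge raised to the power $\beta_0$.

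The crux — and the step I expect to be the main obstacle — is the verification of this two‑point inequality. It is a genuinely two‑variable, nonlinear inequality with no slack to spare: it is an equality along the subcube configuration (where it collapses to $J''J=-2$), and it is exactly this ``variant of Bobkov's inequality'' whose failure for $\beta\in[\tfrac12,\beta_0)$ fixes the threshold $\beta_0=0.50057$ and which only barely survives at $\beta=\beta_0$. I would reduce it by homogeneity and by monotonicity in the gradient variable to a one‑parameter inequality, pin down the equality configuration, Taylor‑expand there to second order to see that $\beta\ge\beta_0$ is forced, and then dispose of the remaining compact range of parameters by interval arithmetic, in line with the computer‑assisted verifications used elsewhere in the paper. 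Combining the resulting bound $\mathbf{E}h_A^{\beta_0}\ge J(|A|)$ with \eqref{eqn:classicalisoperimbeta} then yields \eqref{eqn:isoperimJ}.
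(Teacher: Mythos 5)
Your plan has two substantive gaps, one logical and one structural, and it diverges from the paper's actual route in a way that I don't think can be repaired as sketched.

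The logical gap is that your delegation of the $L$-half of the max to \eqref{eqn:classicalisoperimbeta} is circular. In the paper, \eqref{eqn:classicalisoperimbeta} for $\beta=\beta_0$ is a \emph{corollary} of Theorem \ref{thm:isoperimJ} together with Theorem \ref{thm:mainisoperim}, not a prior fact — it was previously known only for $\beta=\log_2(3/2)\approx 0.585$. And you cannot get it cheaply from Theorem \ref{thm:mainisoperim} applied to $A^c$, precisely because $\mathbf{E} h_A^\beta \neq \mathbf{E} h_{A^c}^\beta$ when $\beta\neq 1$ (the paper emphasizes this). So the ``easy'' half of your split has no independent proof. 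The paper instead proves both halves in one shot: it defines $\tilde b_{\beta_0}$ by replacing $J$ on $[\tfrac12,1]$ with $\max(J(x),L_{\beta_0}(1-x))$, and verifies the single Kahn–Park two-point inequality $G_{\beta_0}[\tilde b_{\beta_0}]\ge 0$ via Lemma \ref{lem:maxlemma2}, using \eqref{eqn:maintwoptbeta0} for the $b_{\beta_0}$-branch, the known \cite[Lemma 2.3]{BIM23} estimate for the $\tilde L$-branch on $x,y\ge\tfrac12$, and Lemma \ref{lem:JvsL_fun} to guarantee that the $\tilde L$-branch is only triggered when $\tfrac{x+y}{2}\ge 1-10^{-10^{380}}$, hence $x,y\ge\tfrac12$. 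Proposition \ref{prop:kahnpark} then delivers \eqref{eqn:isoperimJ} directly.

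The structural gap is in your plan for the $J$-half. Rewriting $\mathbf{E} h_A^{\beta_0}$ as a one-sided Gaussian gradient functional $\mathbf{E}[(I(\mathbf{1}_A)^2+\|\nabla\mathbf{1}_A\|^2)^{\beta_0}\mathbf{1}_A]$ is a tautology (since $I$ vanishes on $\{0,1\}$) and doesn't by itself enable a Bobkov-style tensorization, because — as you observe — conditioning destroys Booleanness. Your proposed fix (a two-variable Bellman function $\mathbb{B}(x,s)$) is not the paper's device, it is not made precise enough to check, and it would sit entirely inside the $G^1$-type regime: a Bobkov/Bobkov--Götze induction can only ever yield two-point inequalities of the form $G^1_\beta[B]\ge 0$. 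But the paper stresses that the Kahn--Park $G^2$ term is crucial (it is invoked, for example, throughout Case $LJQ$), and without it the two-point inequality for the piecewise function on $[0,\tfrac12]$ fails. Finally, your suggestion to ``supply'' the $[0,\tfrac12)$ part from Theorem \ref{thm:mainisoperim} is not how the induction works: Proposition \ref{prop:kahnpark} requires the two-point inequality to hold uniformly across all of $[0,1]$, including the matching at $x=\tfrac12$, which is precisely what the cases $LJQ$, $QJQ$, $LJ$, $QJ$ in \S\ref{sec:maintwopt} are there to check.
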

In particular, this implies \eqref{eqn:classicalisoperimbeta} for $\beta=\beta_0$. The inequality \eqref{eqn:isoperimJ} improves \eqref{eqn:classicalisoperimbeta} when
\[\tfrac12<|A|< 1-10^{-10^{380}}.\]
(see Lemma \ref{lem:JvsL_fun}).

\begin{rem}
The restriction to the hardest case $\beta=\beta_0$ in Theorem \ref{thm:isoperimJ} is only for technical reasons. We can show the conclusion for all $\beta\in [\beta_0,1]$, but omit the details for brevity.
\end{rem}

\subsection{Towards a conjecture of Kahn and Park}

As an immediate consequence of Theorem \ref{thm:mainisoperim} we come closer to a conjecture of Kahn and Park \cite[Conjecture 1.3]{KP20}.
\begin{cor}\label{cor:kahnpark}
Let $(A,B,W)$ be a partition of $\{0,1\}^n$ and assume $|A|=\frac12$. Then
\[ |\nabla(A,B)| + n^{0.50057} |W|\ge \tfrac12, \]
where $|\nabla(A,B)|=2^{-n} \# \{(x,y)\,:\,x\in A, y\in B\}$ and $|W|=\mathbf{E} \mathbf{1}_W$.
\end{cor}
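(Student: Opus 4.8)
The plan is to derive Corollary~\ref{cor:kahnpark} directly from Theorem~\ref{thm:mainisoperim} applied to the set $A$, with the three-way partition used only to bound the quantity $\mathbf{E} h_A^{\beta_0}$ from above by a combination of edge and vertex contributions. First I would set $\beta=\beta_0=0.50057$ and observe that since $|A|=\tfrac12$, Theorem~\ref{thm:mainisoperim} gives
\[ \mathbf{E} h_A^{\beta_0} \ge \tfrac12 (\log_2(1/|A|))^{\beta_0} = \tfrac12. \]
So it suffices to prove the pointwise-in-expectation bound
\[ \mathbf{E} h_A^{\beta_0} \le |\nabla(A,B)| + n^{\beta_0}|W|. \]

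The key step is a vertex-by-vertex estimate. Fix $x\in A$ and recall $h_A(x)$ counts edges from $x$ leaving $A$; each such edge goes either to $B$ or to $W$. Write $h_A(x)=h_A^B(x)+h_A^W(x)$ where $h_A^B(x)$ is the number of neighbors of $x$ in $B$ and $h_A^W(x)$ the number in $W$. Since $0\le \beta_0\le 1$, the function $t\mapsto t^{\beta_0}$ is concave and subadditive on $[0,\infty)$, hence
\[ h_A(x)^{\beta_0} \le h_A^B(x)^{\beta_0} + h_A^W(x)^{\beta_0}. \]
For the first term I would use $h_A^B(x)^{\beta_0}\le h_A^B(x)$ (valid since $h_A^B(x)$ is a nonnegative integer and $\beta_0\le 1$, so $t^{\beta_0}\le t$ for $t\ge 1$, and it is $0$ when $t=0$). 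For the second term use the trivial bound $h_A^W(x)\le n$, so $h_A^W(x)^{\beta_0}\le n^{\beta_0}\mathbf{1}_{h_A^W(x)\ge 1}\le n^{\beta_0}\mathbf{1}_{h_A^W(x)\ge 1}$; more simply, $h_A^W(x)^{\beta_0}\le n^{\beta_0}$ whenever $x$ has at least one neighbor in $W$, but to get the clean form I would instead just bound $h_A^W(x)^{\beta_0}\le n^{\beta_0}\cdot\mathbf{1}_{h_A^W(x)\ge 1}$ and then note this is at most $n^{\beta_0}$ times the indicator, which upon taking expectations and using that each vertex of $W$ is counted... — here one must be slightly careful. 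Summing over $x\in A$ and taking $\mathbf{E}$: the $B$-term gives $\mathbf{E}\sum_{x\in A} h_A^B(x)^{\beta_0}\le 2^{-n}\#\{(x,y): x\in A, y\in B, x\sim y\} = |\nabla(A,B)|$, and the $W$-term gives $\mathbf{E}\sum_{x\in A} h_A^W(x)^{\beta_0}\le n^{\beta_0}\cdot 2^{-n}\#\{x\in A: \exists y\in W, x\sim y\}\le n^{\beta_0}|W|$, since distinct such $x$ can be charged to the at least... actually one charges each contributing $x$ to a neighbor in $W$, and a vertex $w\in W$ has only $n$ neighbors — so this crude charging gives $n^{1+\beta_0}|W|$, not $n^{\beta_0}|W|$.

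The main obstacle is therefore exactly this $W$-term: the naive bound loses a factor of $n$. The fix is to not take $\beta_0$-powers on the $W$-side at all. Instead, go back to $h_A(x)^{\beta_0}\le h_A^B(x)^{\beta_0}+h_A^W(x)^{\beta_0}$ and replace the second estimate by $h_A^W(x)^{\beta_0}\le n^{\beta_0-1} h_A^W(x)$ (valid since $1\le h_A^W(x)\le n$ implies $h_A^W(x)^{\beta_0}=h_A^W(x)\cdot h_A^W(x)^{\beta_0-1}\le h_A^W(x)\cdot n^{\beta_0-1}$ as $\beta_0-1<0$). Then $\mathbf{E}\sum_{x\in A} h_A^W(x)^{\beta_0}\le n^{\beta_0-1}\cdot 2^{-n}\#\{(x,y):x\in A,y\in W,x\sim y\}\le n^{\beta_0-1}\cdot n|W| = n^{\beta_0}|W|$, since each $w\in W$ has exactly $n$ neighbors total, at most $n$ of which lie in $A$. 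Combining, $\mathbf{E} h_A^{\beta_0}\le |\nabla(A,B)| + n^{\beta_0}|W|$, and together with $\mathbf{E} h_A^{\beta_0}\ge\tfrac12$ this is precisely the claimed inequality. I expect the whole argument to be short once the correct power-splitting on the $W$-side is identified; the only subtlety is ensuring the exponent bookkeeping ($\beta_0-1<0$, so larger $h_A^W$ helps) is done in the direction that absorbs the combinatorial factor $n$.
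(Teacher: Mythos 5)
Your proposal has a genuine error, and it is concentrated precisely at the step you flagged as ``the only subtlety.'' You attempt to fix the lossy $W$-bound by claiming
\[ h_A^W(x)^{\beta_0} = h_A^W(x)\cdot h_A^W(x)^{\beta_0-1} \le h_A^W(x)\cdot n^{\beta_0-1} \qquad\text{since } 1\le h_A^W(x)\le n,\ \beta_0-1<0, \]
but the inequality goes the wrong way. Because $\beta_0-1<0$, the map $t\mapsto t^{\beta_0-1}$ is \emph{decreasing}, so $h_A^W(x)\le n$ gives $h_A^W(x)^{\beta_0-1}\ge n^{\beta_0-1}$, i.e.\ $h_A^W(x)^{\beta_0}\ge n^{\beta_0-1}h_A^W(x)$. (Concretely, if $h_A^W(x)=1$ then the left side is $1$ while the right side is $n^{\beta_0-1}<1$.) With the inequality corrected, the $W$-term again costs $n\cdot n^{\beta_0-1}\cdot|W|\cdot\dots$, and you are back to the factor-of-$n$ loss you identified. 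This is not a cosmetic issue: the whole difficulty with your route (applying Theorem \ref{thm:mainisoperim} to $A$) is that the vertices carrying the ``boundary mass'' live in $A$, and a single $w\in W$ has up to $n$ neighbors in $A$, so any charging from $A$ to $W$ generically loses a factor of $n$. It is not clear that your intermediate inequality $\mathbf{E}h_A^{\beta_0}\le|\nabla(A,B)|+n^{\beta_0}|W|$ even holds in general.

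The paper avoids all of this by applying Theorem \ref{thm:mainisoperim} to the set $B\cup W$ rather than to $A$. Since $|B\cup W|=1-|A|=\tfrac12$, the theorem gives $\mathbf{E}h_{B\cup W}^{\beta_0}\ge\tfrac12$. Now $h_{B\cup W}(x)$ is supported on $x\in B\cup W$ and counts neighbors of $x$ in $A$. Splitting $\mathbf{E}h_{B\cup W}^{\beta_0}$ into $\mathbf{E}(h_{B\cup W}^{\beta_0}\mathbf{1}_B)+\mathbf{E}(h_{B\cup W}^{\beta_0}\mathbf{1}_W)$, each vertex is charged to itself: on $B$, use $h^{\beta_0}\le h$ and sum to get $|\nabla(A,B)|$; on $W$, use $h^{\beta_0}\le n^{\beta_0}$ and sum to get $n^{\beta_0}|W|$. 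No charging argument, no factor of $n$. This is the missing idea in your proposal: exploit $|A|=\tfrac12\Rightarrow|B\cup W|=\tfrac12$ and push the isoperimetric lower bound onto the side of the partition that $W$ actually lives in.
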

Kahn and Park proved this with $\log_3(\frac32)\approx 0.585$ in the exponent of $n$ and in \cite[Cor. 1.6]{BIM23} this was improved to $0.53$, while the conjectured optimal exponent is $0.5$. Corollary \ref{cor:kahnpark} follows from Theorem \ref{thm:mainisoperim} because for $|B\cup W|=|A|=\frac12$, we have
\[ \tfrac12\le \mathbf{E} h_{B\cup W}^{\beta_0} \le \mathbf{E} (h_{B\cup W}\mathbf{1}_B) + \mathbf{E} (h_{B\cup W}^{\beta_0}\mathbf{1}_W) \le |\nabla(A,B)| + n^{0.50057} |W|. \]

\subsection{Sharp Poincar\'e inequalities}
Let $1\leq p\le 2$ and let $C_p$ be the largest constant such that the $L^p$ Poincar{\'e} inequality
\[\|\nabla f\|_p \geq C_p \|f-\mathbf{E} f\|_p \]
holds for all functions $f:\{0,1\}^n\to \mathbb{C}$, where $\|f\|_p=(\mathbf{E}|f|^p)^{1/p}$ and
\[|\nabla f|^2=\sum_{i=1}^n (\tfrac12(f(x)-f(x\oplus e_i)))^2.\]
The value of $C_{p}$ remains unknown except for $p=2$ where $C_2=1$. In the endpoint case $p=1$, it is known \cite{BELP08}, \cite{ILvHV} that
\[ \sqrt{\tfrac{2}{\pi}}\ge C_1>\tfrac{2}\pi\]
and it is conjectured that $C_1=\sqrt{2/\pi}$, matching Pisier's inequality \cite{pis02} for the Gaussian case.
The lower bound $C_{1}>\frac{2}{\pi}$ was obtained only recently \cite{ILvHV} with arguments that eventually led to the resolution of Enflo's problem \cite{IVH}. These techniques were later used in \cite{haonan1} and \cite{Esken1} to obtain sharpening of Poincar\'e inequalities in the quantum setting, and for vector valued functions.

In \cite{BIM23} it was shown that if we restrict the inequality to Boolean-valued functions $f:\{0,1\}^n\to \{0,1\}$, then
the corresponding best constant $C_{B,p}$
satisfies $C_{B,1} >C_1$. Specifically, it was shown that $C_{B,1}\ge \sqrt{2^{3/2}-2}>\sqrt{2/\pi}$.
It is conjectured that indicator functions of half-cubes extremize this inequality, which would mean that $C_{B,p}=1$ for all $p\ge 1$.
We obtain an improvement of the best known lower bound for $C_{B,1}$ as well as the sharp bound $C_{B,p} =1$ for $p\geq 2\beta_0$.
\begin{thm}\label{thm:poincare} For all $p\geq 2\beta_0= 1.00114$ and $f:\{0,1\}^n\to \{0,1\}$,
\begin{equation}\label{eqn:poincarealmostL1}
\|\nabla f\|_p\ge \|f-\mathbf{E}f\|_p.
\end{equation}
Equality is achieved when $f=\mathbf{1}_A$ for a half-cube $A$.
Moreover,
\[ \|\nabla f\|_1\ge 0.997 \|f-\mathbf{E}f\|_1. \]
\end{thm}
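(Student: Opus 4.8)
The plan is to deduce Theorem~\ref{thm:poincare} from the isoperimetric estimates already established, namely Theorem~\ref{thm:mainisoperim} (for the case $p\ge 2\beta_0$) and Theorem~\ref{thm:onehalf} (for the $p=1$ refinement). The first observation is that for a Boolean function $f=\mathbf{1}_A$ one has the pointwise identity $|\nabla f(x)|^2 = \tfrac14 h_A(x)$ when $x\in A$ and $|\nabla f(x)|^2 = \tfrac14 h_{A^c}(x)$ when $x\notin A$, since each coordinate flip either keeps $x$ inside $A$ (contributing $0$) or moves it out (contributing $(\tfrac12)^2$). Hence for any $p\ge 1$,
\[ \|\nabla f\|_p^p = \mathbf{E}|\nabla f|^p = 2^{-p}\big( \mathbf{E} h_A^{p/2} + \mathbf{E} h_{A^c}^{p/2}\big). \]
On the other side, $\|f-\mathbf{E}f\|_p^p = \mathbf{E}|\mathbf{1}_A - |A||^p = |A|(1-|A|)^p + (1-|A|)|A|^p$. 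Writing $a=|A|$ and assuming without loss of generality $a\le \tfrac12$ (the two sides are symmetric under $A\leftrightarrow A^c$), the desired inequality \eqref{eqn:poincarealmostL1} becomes
\[ \mathbf{E} h_A^{p/2} + \mathbf{E} h_{A^c}^{p/2} \ge 2^p\big( a(1-a)^p + (1-a)a^p\big). \]

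The second step is to discard the $A^c$ term (which is nonnegative) and reduce to proving $\mathbf{E} h_A^{p/2}\ge 2^p\big(a(1-a)^p+(1-a)a^p\big)$ for $a\le\tfrac12$. Set $\beta = p/2\ge \beta_0$. By Theorem~\ref{thm:mainisoperim}, $\mathbf{E} h_A^{\beta}\ge a(\log_2(1/a))^{\beta}$, so it suffices to check the \emph{purely one-variable} inequality
\[ a(\log_2(1/a))^{\beta} \ge 2^{2\beta}\big( a(1-a)^{2\beta} + (1-a)a^{2\beta}\big) \qu\text{for all dyadic } a\in(0,\tfrac12]. \]
Since both sides vanish to first order and the right side is dominated near $a=0$ by $2^{2\beta}a$ whereas the left side is $a(\log_2(1/a))^\beta\gg a$, the inequality is comfortable for small $a$; the only delicate point is near $a=\tfrac12$. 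At $a=\tfrac12$ it reads $\tfrac12\cdot 1 \ge 2^{2\beta}\cdot 2\cdot \tfrac12\cdot 2^{-2\beta} = \tfrac12$, an equality, so one must verify it holds with the correct sign on a neighborhood. The cleanest route is to note that $a$ ranges over dyadic rationals, so $a\le\tfrac12$ actually means $a\le\tfrac14$ or $a=\tfrac12$; at $a=\tfrac12$ equality holds (this is the claimed extremality of half-cubes), and for $a\le\tfrac14$ one has $\log_2(1/a)\ge 2$ while $(1-a)^{2\beta}\le 1$ and $(1-a)a^{2\beta-1}\le a^{2\beta-1}\le (\tfrac14)^{2\beta-1}$, so the right side is at most $2^{2\beta}a(1 + (\tfrac14)^{2\beta-1})$, and one checks $2^\beta\cdot(\log_2(1/a))^\beta/a^{0}\ge \dots$ — concretely it reduces to $(\log_2(1/a))^\beta \ge 2^{2\beta}(1-a)^{2\beta}+2^{2\beta}(1-a)a^{2\beta-1}$, which for $a\le\tfrac14$, $\beta\ge\beta_0$ follows by monotonicity after bounding $\log_2(1/a)\ge 2$.

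For the $p=1$ statement, the same reduction with $\beta=\tfrac12$ gives the need for $\mathbf{E}\sqrt{h_A}\ge 2\,c\,(a(1-a)+(1-a)a)\cdot(\text{something})$; more precisely \eqref{eqn:poincarealmostL1} with constant $0.997$ amounts to
\[ \mathbf{E}\sqrt{h_A} + \mathbf{E}\sqrt{h_{A^c}} \ge 0.997\cdot 2\big(a(1-a)+(1-a)a\big) = 0.997\cdot 4a(1-a). \]
Here one cannot afford to throw away the $A^c$ term, since near $a=\tfrac12$ both contribute comparably. Instead one applies Theorem~\ref{thm:onehalf} to \emph{both} $A$ and $A^c$: writing $a=2^{-k}$ when $a$ is a negative power of two (and handling general dyadic $a$ via the technical bound \eqref{eqn:actualisoperimbeta0} referenced in the statement of Theorem~\ref{thm:onehalf}), $\mathbf{E}\sqrt{h_A}\ge 0.997\cdot a\sqrt{\log_2(1/a)}$ and similarly for $A^c$ when $|A^c|\le\tfrac12$, i.e. when $a\ge\tfrac12$; combining and using $\sqrt{\log_2(1/a)}\ge$ the relevant linear-in-$a$ lower bound recovers $4a(1-a)$ up to the factor $0.997$. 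I expect the main obstacle to be precisely this last step: the elementary-looking inequality $a\sqrt{\log_2(1/a)}\ge $ (const)$\cdot a(1-a)$ is false for $a$ near $\tfrac12$ with the constant needed, so one genuinely needs the two-sided contribution $\mathbf{E}\sqrt{h_A}+\mathbf{E}\sqrt{h_{A^c}}$ together with the full strength of the technical lower bound \eqref{eqn:actualisoperimbeta0} — not merely the clean form stated in Theorem~\ref{thm:onehalf} — and verifying the resulting one-variable inequality over all dyadic $a\in(0,1)$ with the constant $0.997$ is where the interval-arithmetic input is really used. The equality case $f=\mathbf{1}_A$ for a half-cube is immediate from the computation above: there $a=\tfrac12$, $h_A\equiv 1$ on $A$, $h_{A^c}\equiv 1$ on $A^c$, so $\|\nabla f\|_p^p = 2^{-p}$ and $\|f-\mathbf{E}f\|_p^p = 2^{-p}$.
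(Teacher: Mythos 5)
Your reduction to $\|\nabla f\|_p^p = 2^{-p}(\mathbf{E} h_A^{p/2} + \mathbf{E} h_{A^c}^{p/2})$ and $\|f-\mathbf{E}f\|_p^p = a(1-a)^p + (1-a)a^p$ is correct and matches the paper's \eqref{eqn:twosidedgradient}. But your ``second step'' --- discarding the $A^c$ term and deducing the result from Theorem \ref{thm:mainisoperim} alone --- is exactly the move the paper's introduction warns against, and it fails. Check your own arithmetic at $a=\tfrac12$: the one-variable inequality you propose reads
\[ a(\log_2(1/a))^\beta \;\ge\; 2^{2\beta}\big(a(1-a)^{2\beta}+(1-a)a^{2\beta}\big), \]
and the right side at $a=\tfrac12$ equals $2^{2\beta}\cdot 2\cdot\tfrac12\cdot 2^{-2\beta}=1$, not $\tfrac12$. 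So the proposed inequality is $\tfrac12\ge 1$, which is false; for a half-cube $\mathbf{E} h_A^{p/2}=\tfrac12$ while the target $2^p(a(1-a)^p+(1-a)a^p)=1$. You lose exactly a factor of $2$ by dropping $\mathbf{E} h_{A^c}^{p/2}$, and this is why the half-cube is an extremizer: both boundary terms contribute equally. A separate, smaller error: the dyadic rationals in $(0,\tfrac12]$ are not confined to $[0,\tfrac14]\cup\{\tfrac12\}$ (e.g.\ $\tfrac38,\tfrac5{16},\tfrac7{16}$ are all dyadic), so the ``cleanest route'' you invoke is not available.

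For $p=1$ you do correctly recognize that both terms must be kept and that the full strength of $b_\beta$ (including the $Q_\beta$ piece on $[\tfrac14,\tfrac12]$ and $J$ on $[\tfrac12,1]$, i.e.\ \eqref{eqn:actualisoperimhalf}) is needed --- but the same is true for all $p\ge 2\beta_0$, and the paper handles both cases uniformly. The paper's proof keeps both $A$ and $A^c$ terms, uses $\mathbf{E}h_A^\beta\ge b_\beta(|A|)$ from Corollary \ref{cor:actualisoperim}, and reduces to the single one-variable inequality
\[ 2^{-2\beta}\big(b_\beta(x)+b_\beta(1-x)\big) - x^{2\beta}(1-x)-x(1-x)^{2\beta}\ge 0 \]
(Proposition \ref{prop:poincarepenult}), which is then verified by a three-case analysis (small $x$ via Corollary \ref{prop:Jlowerbd}, mid-range via interval arithmetic, and $x\in[\tfrac14,\tfrac12]$ by a monotonicity argument). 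Your proposal as written cannot be repaired without restoring the $A^c$ contribution and upgrading from $L_\beta$ to $b_\beta$; once that is done, you would essentially reproduce the paper's argument.
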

This relies on the familiar observation that $\|\nabla \mathbf{1}_A\|_p$ can be written in terms of $\mathbf{E} h_A^{p/2}$ and $\mathbf{E} h_{A^c}^{p/2}$; see \eqref{eqn:twosidedgradient}.
A subtlety is that \eqref{eqn:poincarealmostL1} does not follow from Theorems \ref{thm:mainisoperim} and \ref{thm:isoperimJ}, but requires the stronger, more technical isoperimetric inequalities \eqref{eqn:actualisoperimhalf} and \eqref{eqn:actualisoperimbeta0} proved below.

\subsection*{Structure of the paper} Here is a brief overview of the different sections and where to find the proofs of the main theorems:
\begin{itemize}
\item[--] In \S \ref{sec:envelope} we reduce the proofs of Theorems \ref{thm:mainisoperim}, \ref{thm:onehalf} and \ref{thm:isoperimJ} to proving certain two-point inequalities for a Bellman-style function, Theorem \ref{thm:maintwopt}. Finding this function was facilitated by numerical approximation of the true envelope function.
\item[--] In \S \ref{sec:bobkov} we revisit Bobkov's inequality and prove Theorem \ref{thm:asymp}, as well as a variant of Bobkov's two-point inequality used in the proof of Theorem \ref{thm:maintwopt}.
\item[--] In \S \ref{sec:auto} we explain how to automate proving that a function is positive using interval arithmetic.
To our knowledge it is the first time computer-assisted methods are used in this context.
\item[--] In \S \ref{sec:prelim} we prove various auxiliary estimates used throughout.
\item[--] In \S \ref{sec:maintwopt} we prove the main two-point inequalities, Theorem \ref{thm:maintwopt}. This takes a considerable amount of effort, even with computer-assisted proofs.
The analysis naturally breaks into several cases; the most critical one is ``Case $J$.I'' in \S \ref{sec:J-I}.
\item[--] In \S \ref{sec:poincare} we derive Theorem \ref{thm:poincare} as a consequence of Theorem \ref{thm:maintwopt}.
\end{itemize}

\subsection*{Acknowledgments} We thank the American Institute of Mathematics (AIM) for funding our SQuaRE workshop from which this project developed. We also thank our fellow SQuaRE members Irina Holmes and Alexander Volberg. J.R. also thanks Rodrigo Ba\~{n}uelos for helpful comments. The authors were supported in part by grants from the National Science Foundation DMS-2154356 (P.D.), CAREER-DMS-2152401 (P.I.), DMS-2154835 (J.R.).

\section{Envelope functions}\label{sec:envelope}
Let $B:[0,1]\to [0,\infty)$ be given. For $\beta\ge \tfrac12$, $0\le x\le y\le 1$ set
\begin{equation}\label{eqn:Gdef}
G^1_{\beta}[B](x,y) = ((y-x)^{1/\beta}+B(y)^{1/\beta})^\beta + B(x) - 2 B(\tfrac{x+y}2),
\end{equation}
\[ G^2_{\beta}[B](x,y) = y-x + (2^\beta-1) B(y) + B(x) - 2 B(\tfrac{x+y}2), \]
\[ G_{\beta}[B](x,y) = \max(G^1_\beta[B](x,y), G^2_{\beta}[B](x,y)). \]

\begin{prop}\label{prop:kahnpark}
Suppose $B:[0,1]\to [0,\infty)$ satisfies $B(0)=B(1)=0$ and that the following two-point inequality holds:
\begin{equation}\label{eqn:twoptineq}
G_\beta[B](x,y)\ge 0
\end{equation}
for all $0\le x\le y\le 1$. Then $\mathcal{B}_\beta\ge B$.
\end{prop}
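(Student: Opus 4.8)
The plan is to induct on the dimension $n$, showing that for every $A \subset \{0,1\}^n$ one has $\mathbf{E} h_A^\beta \ge B(|A|)$; taking the infimum over $n$ and over $A$ with $|A| = x$ then gives $\mathcal{B}_\beta(x) \ge B(x)$ for every dyadic rational $x$. The base case $n = 0$ is trivial since then $A$ is either empty or all of $\{0,1\}^0$, contributing $\mathbf{E} h_A^\beta = 0 = B(0) = B(1)$ by hypothesis. For the inductive step, I would split the cube $\{0,1\}^n$ into its two facets $\{x : x_n = 0\}$ and $\{x : x_n = 1\}$, each a copy of $\{0,1\}^{n-1}$, and write $A = A_0 \cup A_1$ accordingly, with $x_0 = |A_0|$, $x_1 = |A_1|$ measured inside $\{0,1\}^{n-1}$ so that $|A| = \tfrac{x_0+x_1}{2}$. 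By symmetry assume $x_0 \le x_1$; set $x = x_0$, $y = x_1$, so $\tfrac{x+y}{2} = |A|$.

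The key point is to control $\mathbf{E} h_A^\beta$ in terms of $\mathbf{E} h_{A_0}^\beta$ and $\mathbf{E} h_{A_1}^\beta$ in the facets. For a vertex $v \in A_0$, its boundary degree in $A$ is $h_{A_0}(v)$ (its boundary edges within the facet) plus possibly one more edge if the corresponding vertex in the other facet lies outside $A_1$; similarly for $v \in A_1$. The worst case for the lower bound is analyzed in two regimes, corresponding to the two functions $G^1$ and $G^2$. In the first regime one uses the subadditivity-type estimate for the map $t \mapsto t^\beta$ together with the rearrangement of the extra cross-edges: the $|A_1 \setminus A_0|$-many extra edges incident to $A_0$-vertices and $|A_0 \setminus A_1|$-many extra edges incident to $A_1$-vertices are distributed so as to minimize the sum, leading (after applying the elementary inequality $(a^{1/\beta} + b^{1/\beta})^\beta \le a + b$ in the appropriate direction, and the induction hypothesis applied to $A_0$ and $A_1$ inside $\{0,1\}^{n-1}$) to the bound $\mathbf{E} h_A^\beta \ge \tfrac12\big(((y-x)^{1/\beta} + B(y)^{1/\beta})^\beta + B(x)\big)$. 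Rearranging, this is exactly $\mathbf{E} h_A^\beta - B(|A|) \ge \tfrac12 G^1_\beta[B](x,y) \ge 0$. In the second regime, each cross-edge between $A_0$ and $A_1^c$ (or $A_0^c$ and $A_1$) is counted with multiplicity one in $h_A$ for a vertex that may have been boundary-free before; using convexity of $t \mapsto t^\beta$ on the relevant vertices (so that adding $1$ to a degree-$d$ vertex contributes at least $(d+1)^\beta - d^\beta \ge (2^\beta - 1)$ worth of increase when $d \ge 1$, and contributes $1$ when $d = 0$) yields the complementary bound $\mathbf{E} h_A^\beta - B(|A|) \ge \tfrac12 G^2_\beta[B](x,y) \ge 0$. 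Taking the better of the two estimates closes the induction.

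The main obstacle I expect is making the combinatorial bookkeeping in the inductive step fully rigorous — in particular, justifying that the cross-edges can be assumed to be arranged in the extremal configuration (those edges joining $A_0$-vertices to the complement of $A_1$, and vice versa, sit on as few vertices as possible, or alternatively spread out, depending on which of $G^1, G^2$ is being proved), and that one may pass to that configuration without increasing $\mathbf{E} h_A^\beta$. This is a compression/rearrangement argument: one should check that moving a cross-edge to overlap with an existing within-facet boundary edge only decreases the $\beta$-th power sum (for $G^1$), while the opposite spreading only decreases it in the regime governing $G^2$. A secondary technical point is the precise form of the elementary power inequalities $(a^{1/\beta}+b^{1/\beta})^\beta \le a + b$ and $(d+1)^\beta - d^\beta \ge 2^\beta - 1$ for $d \ge 1$, both of which hold for $\beta \in [\tfrac12, 1]$ by concavity; these should be isolated as a small lemma. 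Once the two-facet reduction is in place, the hypothesis \eqref{eqn:twoptineq} is exactly what is needed, with no further input.
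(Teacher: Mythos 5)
The paper does not reproduce a proof of this proposition; it cites Kahn and Park \cite{KP20} (and \cite{BIM23}) and only indicates that the argument is an induction on $n$. Your overall strategy---splitting $\{0,1\}^n$ into two facets, applying the induction hypothesis to $A_0,A_1$, and invoking the two-point inequality at the facet densities $x=|A_0|\le|A_1|=y$---is exactly the Kahn--Park scheme, and the reduction you describe (show $2\mathbf{E}h_A^\beta\ge B(x)+T$ with $T=\mathbf{E}[(h_{A_1}+\mathbf{1}_{A_1\setminus A_0})^\beta]$, then establish both $T\ge((y-x)^{1/\beta}+B(y)^{1/\beta})^\beta$ and $T\ge(y-x)+(2^\beta-1)B(y)$, and let the hypothesis $\max(G^1,G^2)\ge 0$ choose which bound to use) is correct. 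However, the way you propose to prove the two lower bounds for $T$ does not work as stated.

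For the first bound a rearrangement/compression argument is unnecessary and is a distraction: since $\mathbf{1}_{A_1\setminus A_0}$ is $\{0,1\}$-valued it equals its own $1/\beta$-th power, so pointwise $(h_{A_1}+\mathbf{1}_{A_1\setminus A_0})^\beta=\bigl((h_{A_1}^\beta)^{1/\beta}+(\mathbf{1}_{A_1\setminus A_0})^{1/\beta}\bigr)^\beta$, which is the $\ell^{1/\beta}$-norm (convex, since $1/\beta\ge 1$) of the pair $(h_{A_1}^\beta,\mathbf{1}_{A_1\setminus A_0})$. Jensen's inequality on $\{0,1\}^{n-1}$ then gives $T\ge\bigl((\mathbf{E}h_{A_1}^\beta)^{1/\beta}+|A_1\setminus A_0|^{1/\beta}\bigr)^\beta$, and monotonicity in each coordinate together with $\mathbf{E}h_{A_1}^\beta\ge B(y)$ and $|A_1\setminus A_0|\ge y-x$ finishes it. The ``main obstacle'' you identified dissolves. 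For the second bound your pointwise inequality is false: for integer $d\ge 1$ and $\beta<1$ the increments of the concave power are decreasing, so $(d+1)^\beta-d^\beta\le 2^\beta-1$, with strict inequality for $d\ge 2$, not $\ge$. The inequality you actually need is $(t+1)^\beta\ge 1+(2^\beta-1)t^\beta$ for all integers $t\ge 0$ (equality at $t=0,1$; for $t\ge 1$ one checks the derivative is nonnegative using $2^{\beta-1}\ge 2^\beta-1$). Note this fails for real $t\in(0,1)$, so the integrality of $h_{A_1}$ is essential and must be invoked. With it, $T=\mathbf{E}[h_{A_1}^\beta\mathbf{1}_{A_1\cap A_0}]+\mathbf{E}[(h_{A_1}+1)^\beta\mathbf{1}_{A_1\setminus A_0}]\ge (2^\beta-1)\mathbf{E}h_{A_1}^\beta+|A_1\setminus A_0|\ge(2^\beta-1)B(y)+(y-x)$, using $2-2^\beta\ge 0$ on $A_1\cap A_0$. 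Finally, a small bookkeeping slip: the extra cross-edges incident to $A_0$-vertices number $|A_0\setminus A_1|$ and those incident to $A_1$-vertices number $|A_1\setminus A_0|$; you wrote them the other way around.
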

This was proved by Kahn and Park \cite{KP20}. The proof is by induction on $n$ (see \cite{KP20}, \cite{BIM23}). It improves on prior induction schemes by Talagrand \cite{Tal93}, Bobkov \cite{Bob97} and Bobkov--G\"otze \cite{BG99}.
The refinement lies mainly in the inclusion of the term $G^2$ which is crucial for our application.
This reduces the proof of Theorems \ref{thm:mainisoperim}, \ref{thm:onehalf} and \ref{thm:isoperimJ} to finding an appropriate function $B$ and verifying the two-point inequality \eqref{eqn:twoptineq}.
Both steps are difficult obstacles.
Define
\[ L_\beta(x) = x (\log_2(1/x))^\beta, \]
and let $Q_\beta(x)$ be the unique cubic interpolation polynomial such that $Q_\beta(0)=Q_\beta(1)=0$, $Q_\beta(\frac12)=\frac12$ and $Q_\beta(\frac14)=2^{\beta-2}$. Then
\[ Q_\beta(x) = \tfrac23 x (1-x)(2^{\beta+2}-3+4(3-2^{\beta+1}) x). \]
The polynomial $Q_\frac12$ has been used also in \cite{BIM23}.
Define
\begin{equation}\label{eqn:bbdef}
b_\beta(x) = \left\{ \begin{array}{ll}
L_\beta(x) & \text{for } x\in [0, \frac14],\\
Q_\beta(x)
& \text{for }x\in [\frac14, \frac12],\\
J(x) & \text{for }x\in [\frac12, 1].
\end{array}\right.
\end{equation}
The function $J$ is as in Theorem \ref{thm:isoperimJ} and
can be expressed in terms of the Gaussian isoperimetric profile, see \eqref{eqn:JintermsofI}.

\begin{thm}[Main two-point inequality]\label{thm:maintwopt}
Let $\beta_0=0.50057$ and $c_0=0.997$.
Then for all $0\le x\le y\le 1$:
\begin{equation}\label{eqn:maintwoptbeta0}
G_{\beta_0}[b_{\beta_0}](x,y)\ge 0,
\end{equation}
\begin{equation}\label{eqn:maintwoptonehalf}
G_{\frac12}[c_0 \cdot b_{\frac12}](x,y)\ge 0.
\end{equation}
\end{thm}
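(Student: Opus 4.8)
The plan is to reduce Theorem~\ref{thm:maintwopt} to a finite collection of pointwise positivity assertions about explicit functions of one or two real variables, and then dispatch each such assertion by the interval-arithmetic machinery promised in \S\ref{sec:auto}. Since $b_\beta$ is defined piecewise on $[0,\frac14]$, $[\frac14,\frac12]$, $[\frac12,1]$ by the three formulas $L_\beta$, $Q_\beta$, $J$, and both arguments $x,\tfrac{x+y}2,y$ can independently fall into any of these three regimes (subject to $x\le \tfrac{x+y}2\le y$), the proof naturally breaks into a case distinction according to which subinterval each of the three sampling points lands in. For each such case and each of $G^1_\beta$, $G^2_\beta$, the quantity $G^i_\beta[b_\beta](x,y)$ becomes a concrete elementary expression (involving $\log_2$, rational powers, and the Gaussian isoperimetric profile $I$ through $J$), and the claim is that it is $\ge 0$ on the corresponding polytope in $(x,y)$-space.

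First I would set up the two reductions that trim the case list. One is monotonicity in $\beta$: by Lemma~\ref{lem:holder} it suffices to verify \eqref{eqn:maintwoptbeta0} exactly at $\beta=\beta_0$, and for \eqref{eqn:maintwoptonehalf} the scaling $c_0\cdot b_{1/2}$ has to be handled on its own since the inequality is not monotone under scalar multiplication; but note that $G^2_\beta[cB]=c\,G^2_\beta[B]+(1-c)(y-x)\ge c\,G^2_\beta[B]$ when $c\le 1$, so the $G^2$ part of \eqref{eqn:maintwoptonehalf} follows from the $G^2$ part for $b_{1/2}$ itself, and only the $G^1$ part genuinely needs the factor $c_0$. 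The other reduction is a boundary/degenerate-case analysis: when $x=y$ both $G^i$ vanish, and near the diagonal one expands to second order, where the sign is governed by $B''$ (convexity-type conditions) — in particular on $(\frac12,1)$ the defining ODE $J''J=-2$ is exactly what makes the local expansion of $G^1$ nonnegative, which is the structural reason $J$ was chosen; similarly the endpoints $x=0$ and $y=1$ use $B(0)=B(1)=0$ and need separate one-variable limits.

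The key steps, in order, are: (1) record the closed forms of $G^1_\beta[b_\beta]$, $G^2_\beta[b_\beta]$ in each of the finitely many $(x,y)$-regions, using \eqref{eqn:JintermsofI} to make $J$ explicit; (2) handle the three ``easy'' regions where all of $x,\tfrac{x+y}2,y$ lie in $[0,\frac14]$ (pure $L_\beta$), or all in $[\frac14,\frac12]$ (pure polynomial $Q_\beta$), or all in $[\frac12,1]$ (pure $J$) — here one can often argue by hand or with a low-dimensional interval check, and the pure-$J$ case is where the Bobkov-type two-point inequality from \S\ref{sec:bobkov} does the work; (3) handle the ``mixed'' regions where the three points straddle one or two breakpoints, which is the bulk of the work and where the function is least smooth; (4) combine everything, recalling $G_\beta=\max(G^1_\beta,G^2_\beta)\ge 0$ so for each point it suffices that \emph{one} of the two is nonnegative, which gives flexibility — use $G^2$ (the linear-in-$B$ quantity, better behaved near $y=1$ and for large subcubes) where $G^1$ degenerates and vice versa.

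\textbf{The main obstacle} will be the mixed region near $y$ close to $1$ with $x$ near $\frac12$, i.e. the interaction of the $J$-branch with itself and with the polynomial branch — this is the ``Case $J$.I'' flagged in the introduction. There the Gaussian profile $I$ enters transcendentally, the margin is genuinely tiny (this is precisely what pins $\beta_0$ at $0.50057$ rather than $0.5$), and a naive interval-arithmetic bound on the raw expression will not close because of catastrophic cancellation between $((y-x)^{1/\beta}+J(y)^{1/\beta})^\beta$ and $2J(\tfrac{x+y}2)-J(x)$ as $y\to 1$. The remedy I anticipate is to not evaluate $G^1$ directly but to rewrite it after subtracting off the leading Taylor behavior in $(1-y)$ and $(y-x)$ dictated by the ODE $J''J=-2$, reducing to a positivity claim for a remainder term with a safe lower bound; equivalently, one proves a sharpened, ``$\beta$-perturbed'' version of Bobkov's functional inequality for $I$ (the variant announced for \S\ref{sec:bobkov}) and then the main two-point inequality on the $J$-region becomes a corollary, with the remaining mixed cases closed by interval arithmetic on a suitably subdivided grid, using monotonicity to pass from finitely many boxes to the full polytope.
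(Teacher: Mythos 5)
Your overall strategy---decompose $[0,1]^2\cap\{x\le y\}$ by which of $x,\tfrac{x+y}2,y$ fall into $[0,\tfrac14]$, $[\tfrac14,\tfrac12]$, $[\tfrac12,1]$; in each cell write out $G^i_\beta[b_\beta]$ explicitly and certify positivity via interval arithmetic; use a Bobkov-type two-point inequality for the pure-$J$ region; and deploy a Taylor-subtraction trick where interval arithmetic suffers from cancellation---is exactly the skeleton of the paper's proof (Figure~\ref{fig:cases}, \S\ref{sec:maintwopt}). Your side observation that $G^2_\beta[cB]=(1-c)(y-x)+cG^2_\beta[B]\ge c\,G^2_\beta[B]$ for $c\le1$ is correct, and in fact the same holds for $G^1$ (since $((y-x)^{1/\beta}+c^{1/\beta}B(y)^{1/\beta})^\beta\ge c((y-x)^{1/\beta}+B(y)^{1/\beta})^\beta$ when $c\le 1$), though the paper does not spell this out.

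However, you have misidentified where the genuine obstacle lives, and the misidentification is consequential for your plan. You locate the bottleneck at ``$y$ close to $1$ with $x$ near $\tfrac12$,'' invoking catastrophic cancellation as $y\to1$, and you expect the Bobkov-type inequality of \S\ref{sec:bobkov} to ``do the work'' on the pure-$J$ region. Neither is right. As $y\to1$ one has $J(y)\to0$, so $G^1$ tends to $(y-x)+J(x)-2J(\tfrac{x+y}2)$, which is bounded away from zero and causes no cancellation; indeed the paper's Case~$J$.II, covering $y\in[\tfrac{11}{16},1]$, is dispatched with a crude one-line lower bound and a coarse partition, at $\beta=\tfrac12$ with $c=1$ (see \eqref{eqn:g_J_2_auto}). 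The cancellation you fear occurs instead near the \emph{diagonal} $y\approx x$, where both $(y-x)$ and $2J(\tfrac{x+y}2)-J(x)-J(y)$ vanish to high order; this is the paper's Case~$J$.I, requiring the $N=6$ Taylor development of Lemma~\ref{lem:neardiaglowerbd}. More importantly, the Bobkov-type inequality (Proposition~\ref{prop:twoptbobkov}, Corollary~\ref{cor:JsatisfiesBobkovtwopt}) has the hypothesis $I'\le0$, which for $J$ translates to $x\ge x_0=1-\tfrac{w_0}2\approx0.552$. The range $\tfrac12\le x<x_0$---precisely where the near-diagonal failure forces $\beta_0>\tfrac12$ (Remark~\ref{rem:beta0explanation}, Remark~\ref{rem:Jbobkovtwoptfailure})---is \emph{not} covered by that inequality; this is the content of Remark~\ref{rem:Jbobkovtwoptfailure}, and it is why the paper cannot prove the result at $\beta=\tfrac12$. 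Had you implemented your plan as stated, you would find the supposedly ``easy'' pure-$J$ region is where the whole proof hinges, the Bobkov input is unavailable exactly there, and the Taylor subtraction must be carried out in $h=y-x$ to order $6$ with careful interval enclosures for $J^{(k)}$---not in $(1-y)$.
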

\begin{rem}
\eqref{eqn:maintwoptbeta0} continues to hold for all $\beta\in [\beta_0, 1]$ and this can be proved by the same methods, but we do not pursue this here.
\end{rem}
\begin{rem}\label{rem:beta0explanation}
Failure for smaller values of $\beta_0$ occurs at $x=\tfrac12$ and $y>\tfrac12$, see Figure \ref{fig:failure}. In particular, the failure only involves the function $J$ (the somewhat suspicious looking cubic $Q_\beta$ surprisingly is not involved in this failure).
\begin{figure}[ht]
\includegraphics[width=12cm]{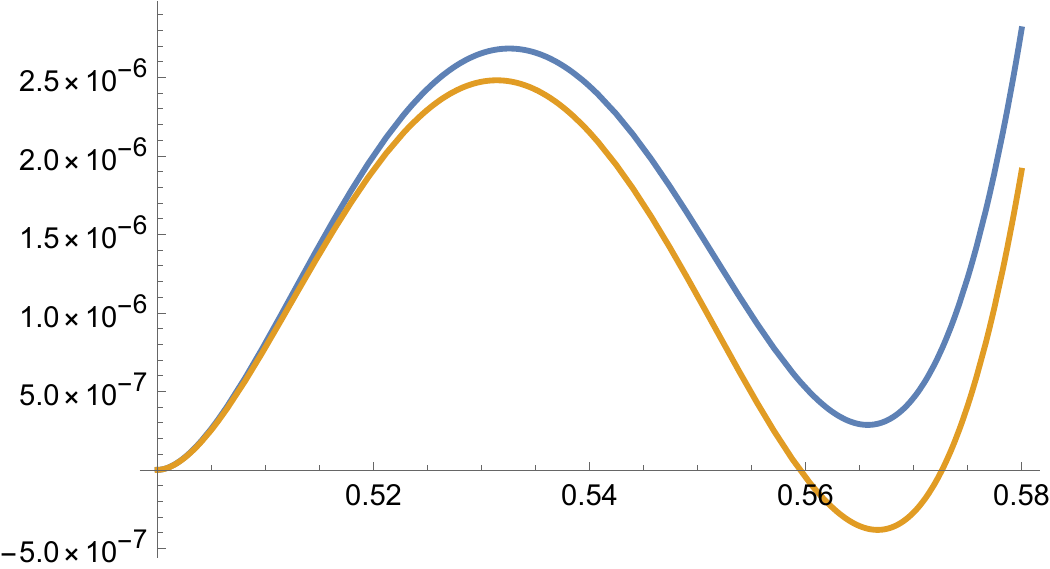}
\caption{The function $y\mapsto G_{\beta}[b_\beta](\frac12,y)$ for $\beta=\frac12+37\cdot 2^{-16}\approx 0.50057$ (blue) and $\beta=\frac12+36\cdot 2^{-16}$ (orange).}
\label{fig:failure}
\end{figure}

$J$ is a very natural candidate for modeling the envelope $\mathfrak{B}_{\frac12}$ (see Definition \ref{defn:envelope}) on $[\frac12,1]$.
The function $J$ extremizes the two-point inequality locally: letting $y\to x$ in the inequality $G_{\frac12}[B](x,y)\ge 0$ and assuming that $B$ is $C^2$ gives the differential inequality
\[ B'' B \ge -2, \]
so it is natural to consider functions satisfying $B''B=-2$ and numerical differentiation of the approximated envelope $\mathfrak{B}_{\frac12}$ on the interval $[\frac12,1]$ further supports this approach.
The numerical evidence also suggests that $|\mathfrak{B}_{\frac12}(x)-J(x)|\le 3\cdot 10^{-5}$ for $x\in [\frac12,1]$ (see Figure \ref{fig:boundcompare}).
The failure of the inequality for the function $J$ at $\beta=\frac12$ is predicted by an analogue of Bobkov's two-point inequality, see Proposition \ref{prop:twoptbobkov} and Remark \ref{rem:Jbobkovtwoptfailure}.
\end{rem}
\begin{rem}
With some additional effort in \S \ref{sec:J-I}, the two-point inequalities could be proved for more optimal values for $\beta_0, c_0$ with arbitrary precision.
\end{rem}

The proof of Theorem \ref{thm:maintwopt} uses computer assistance and is contained in \S \ref{sec:maintwopt}. Theorem \ref{thm:maintwopt} and Proposition \ref{prop:kahnpark} immediately imply:

\begin{cor}\label{cor:actualisoperim}
The following isoperimetric inequalities hold:
\begin{equation}\label{eqn:actualisoperimbeta0} \mathcal{B}_{\beta_0} \ge b_{\beta_0},
\end{equation}
\begin{equation}\label{eqn:actualisoperimhalf}
\mathcal{B}_\frac12 \ge 0.997\cdot b_\frac12.
\end{equation}
\end{cor}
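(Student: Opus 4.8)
The plan is to invoke Proposition \ref{prop:kahnpark} twice, once for each asserted inequality. For \eqref{eqn:actualisoperimbeta0} I would take $\beta=\beta_0$ and $B=b_{\beta_0}$. The hypotheses of Proposition \ref{prop:kahnpark} are: (i) $B(0)=B(1)=0$, and (ii) the two-point inequality $G_\beta[B](x,y)\ge 0$ for all $0\le x\le y\le 1$. For (i), note that by \eqref{eqn:bbdef} the function $b_{\beta_0}$ coincides with $L_{\beta_0}(x)=x(\log_2(1/x))^{\beta_0}$ on a neighborhood of $0$, and $L_{\beta_0}(x)\to 0$ as $x\to 0^+$ since the factor $x$ dominates the logarithmic factor; on a neighborhood of $1$ it coincides with $J$, and $J(1)=0$ by the defining property of $J$ from Theorem \ref{thm:isoperimJ}. (One should also record that $b_{\beta_0}$ is genuinely $[0,\infty)$-valued, which is immediate on each of the three pieces.) For (ii), the required inequality $G_{\beta_0}[b_{\beta_0}](x,y)\ge 0$ is exactly \eqref{eqn:maintwoptbeta0} of Theorem \ref{thm:maintwopt}. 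Proposition \ref{prop:kahnpark} then yields $\mathcal{B}_{\beta_0}\ge b_{\beta_0}$.

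For \eqref{eqn:actualisoperimhalf} I would take $\beta=\tfrac12$ and $B=0.997\cdot b_{\frac12}=c_0 b_{\frac12}$. Scaling by the positive constant $c_0$ preserves both the sign and the vanishing at $0$ and $1$, so hypothesis (i) holds just as before; and the two-point inequality $G_{\frac12}[c_0\cdot b_{\frac12}](x,y)\ge 0$ is precisely \eqref{eqn:maintwoptonehalf} of Theorem \ref{thm:maintwopt}. Applying Proposition \ref{prop:kahnpark} gives $\mathcal{B}_{\frac12}\ge 0.997\cdot b_{\frac12}$.

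There is no real obstacle at the level of the corollary: all the difficulty is encapsulated in Theorem \ref{thm:maintwopt}, whose proof (carried out in \S\ref{sec:maintwopt} via the case analysis and interval-arithmetic verification described in \S\ref{sec:auto}) is where the work lies. The corollary itself amounts only to checking that $b_{\beta_0}$ and $c_0 b_{\frac12}$ satisfy the mild structural hypotheses ($[0,\infty)$-valued, vanishing at the endpoints) demanded by Proposition \ref{prop:kahnpark}, which is routine from the piecewise definition \eqref{eqn:bbdef}.
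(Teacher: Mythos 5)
Your argument is exactly the paper's: apply Proposition \ref{prop:kahnpark} with $B=b_{\beta_0}$ (resp.\ $B=c_0 b_{1/2}$), with the two-point inequality supplied by Theorem \ref{thm:maintwopt}, and the routine boundary/non-negativity checks coming from the piecewise definition \eqref{eqn:bbdef}. Correct, and the same approach the paper takes.
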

(Recall Definition \ref{defn:isoperimprofile} and \eqref{eqn:bbdef}.)

\subsection{Computed envelopes}\label{sec:computedenvelopes}
In order to find a good candidate $B$ it is helpful to observe that if $B_1, B_2$ satisfy \eqref{eqn:twoptineq}, then so does $\max(B_1,B_2)$. This is a consequence of monotonicity (see Lemma \ref{lem:maxlemma2}) and the same holds for the supremum of a families of such functions $(B_i)_{i\in I}$.
\begin{defn}\label{defn:envelope}
The \emph{$\beta$-envelope} $\mathfrak{B}_\beta$ is the supremum of $B$ over all functions $B$ such that $B(0)=B(1)=0$ and \eqref{eqn:twoptineq} holds for all $0\le x\le y\le 1$ with $x,y\in\mathcal{Q}$.
\end{defn}
By the above observation, the $\beta$-envelope satisfies \eqref{eqn:twoptineq}, so Proposition \ref{prop:kahnpark} implies $\mathcal{B}_\beta\ge \mathfrak{B}_\beta$.
Apart from the case $\beta=1$ it is not known whether the reverse inequality holds.

There is a natural way to numerically approximate the envelope functions $\mathfrak{B}_\beta$.
Figure \ref{fig:envelopespectrum} shows plots of numerically approximated envelopes for different values of $\beta$. Details on this and further explorations will appear elsewhere. These numerical approximations are not used to prove any of our results, but they were helpful in finding the function $b_{\frac12}$ which can be viewed as an approximation of the envelope $\mathfrak{B}_{\frac12}$.

\begin{figure}[ht]
\includegraphics[width=12cm]{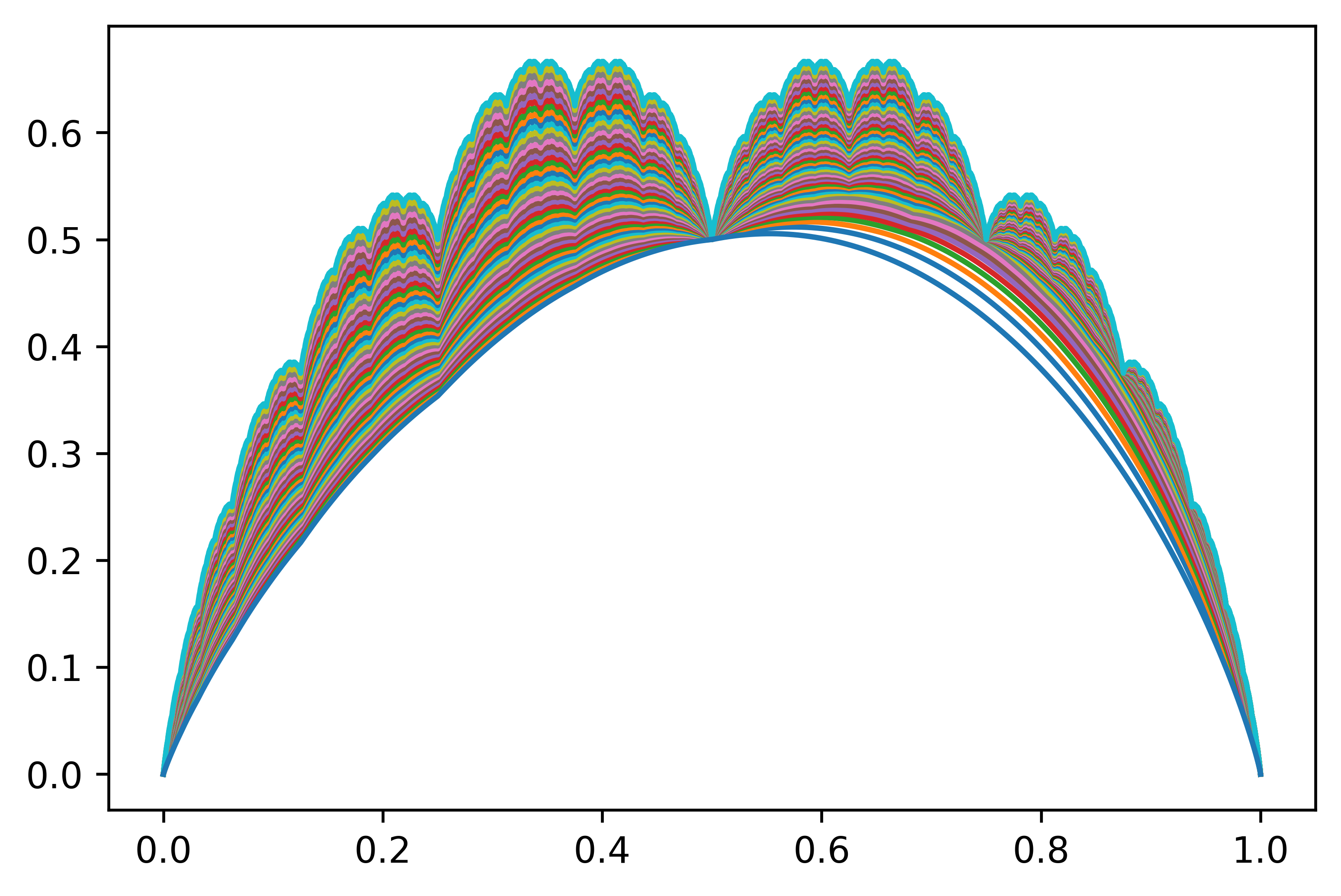}
\caption{Numerically approximated envelopes $\mathfrak{B}_\beta$ for $\beta=\frac12$ (bottom, blue) to $\beta=1$ (top, cyan) with increments in $\beta$ of $0.01$; matching the isoperimetric profile $\mathcal{B}_1$ in \eqref{eqn:hartfractal} when $\beta=1$.}
\label{fig:envelopespectrum}
\end{figure}

\subsection{Proofs of Theorems \ref{thm:mainisoperim}, \ref{thm:onehalf}, \ref{thm:isoperimJ}}\label{sec:mainthmproofs}
To prove Theorem \ref{thm:mainisoperim} it suffices to consider the case $\beta=\beta_0$ (by Lemma \ref{lem:holder}). For $\beta=\beta_0$, \eqref{eqn:isoperimhalf} follows from \eqref{eqn:actualisoperimbeta0} because
\[ b_\beta(x)=Q_\beta(x)\ge L_\beta(x)\quad\text{for}\;x\in [\tfrac14,\tfrac12]. \]
This and various other useful facts about $L_\beta$ and $Q_\beta$ are proved in \S \ref{sec:LandQprop} (see Lemma \ref{lem:LQmax}).
The lower bound in Theorem \ref{thm:onehalf} follows from \eqref{eqn:actualisoperimhalf} while the upper bound follows from letting $A$ be a subcube of codimension $k$.

To prove Theorem \ref{thm:isoperimJ} it suffices to show that
\begin{equation*}
\tilde{b}_\beta(x) = \left\{ \begin{array}{ll}
L_\beta(x) & \text{for } x\in [0, \frac14],\\
Q_\beta(x)
& \text{for }x\in [\frac14, \frac12],\\
\max(J(x),L_\beta(1-x)) & \text{for }x\in [\frac12, 1].
\end{array}\right.
\end{equation*}
also satisfies the two-point inequality
\begin{equation}\label{eqn:btildetwopt}
G_{\beta_0}[\tilde{b}_{\beta_0}](x,y)\ge 0
\end{equation}
for all $0\le x\le y\le 1$. Notice that $J(x)$ and $L_\beta(1-x)$ are equal to $\frac12$ at $x=\frac12$.
Lemma \ref{lem:JvsL_fun} shows that
\begin{equation}\label{eqn:JvsL} J(x)>L_{\beta_0}(1-x)\quad \text{for}\quad x\in (\tfrac12, 1-10^{-10^{380}}).
\end{equation}

The two-point inequality \eqref{eqn:btildetwopt} now follows from Lemma \ref{lem:maxlemma2} by writing $\tilde{b}_{\beta_0}(x)$ as the maximum of $b_{\beta_0}(x)$ and the function
\[\tilde{L}(x)=L_{\beta_0}(1-x)\mathbf{1}_{x\ge \frac12}.\]
The hypotheses of Lemma \ref{lem:maxlemma2} are satisfied by \eqref{eqn:maintwoptbeta0} and because it is known that the function $\tilde{L}(x)=x\mapsto L_{\beta_0}(1-x)$ satisfies $G_{\beta_0}[\tilde{L}](x,y)\ge 0$ for all $\frac12\le x\le y\le 1$ (see \cite[Lemma 2.3]{BIM23}). The estimate \eqref{eqn:JvsL} means that $b_{\beta_0}(\tfrac{x+y}{2})<\tilde{L}(\tfrac{x+y}{2})$ can only happen when $\frac{x+y}2\ge 1-10^{-10^{380}}$ which in particular implies $y\ge x\ge\frac12$. Thus \eqref{eqn:btildetwopt} is proved and Theorem \ref{thm:isoperimJ} follows.

\section{Variants of Bobkov's inequality}\label{sec:bobkov}

In this section we revisit aspects of Bobkov's classical proof \cite{Bob97} of the Gaussian isoperimetric inequality to achieve two objectives:
\begin{enumerate}
\item prove a sharp asymptotic estimate, Theorem \ref{thm:asymp}
\item prove a crucial two-point inequality satisfied by the function $J$
\end{enumerate}
This is also informed by Bobkov and G\"otze's work \cite{BG99}.
For functions $f:\{0,1\}^n\to \mathbb{R}$ let
\[ M_i f(x) = (f(x)-f(x\oplus e_i))_+, M f = \sqrt{\sum_{i=1}^n (M_i f)^2}. \]
Observe $M \mathbf{1}_A = \sqrt{h_A}$. This notation originates in \cite{Tal93}, \cite{BG99}.
\begin{lem}\label{prop:generaltwopointinduction}
Suppose that $\mathcal{I}\subset [0,\infty)$ is an interval and $B:\mathcal{I}\to \mathbb{R}$. If
\begin{equation}\label{eqn:generalnormtwopoint}
B(\mathbf{E} f) \le \mathbf{E} \sqrt{B(f)^2 + (Mf)^2}
\end{equation}
holds for all $f:\{0,1\}\to \mathcal{I}$, then it holds for all $f:\{0,1\}^n\to \mathcal{I}$ and $n\ge 1$.
\end{lem}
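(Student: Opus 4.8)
The plan is to prove this by induction on $n$, with the base case $n=1$ being exactly the hypothesis. The key observation is the standard tensorization structure: writing a function $f:\{0,1\}^n\to\mathcal{I}$ as a pair of functions on $\{0,1\}^{n-1}$ by splitting on the last coordinate, $f_0(x)=f(x,0)$ and $f_1(x)=f(x,1)$ for $x\in\{0,1\}^{n-1}$, we have $\mathbf{E}_n f = \frac12(\mathbf{E}_{n-1}f_0 + \mathbf{E}_{n-1}f_1)$, and the gradient splits as $(Mf)^2 = \sum_{i=1}^{n-1}(M_i f)^2 + (M_n f)^2$ where the last term only involves the difference $f_0-f_1$ in the last coordinate, while the first $n-1$ terms are computed within the respective fibers.

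First I would apply the induction hypothesis in dimension $n-1$ to each fiber: this gives $B(\mathbf{E}_{n-1}f_0) \le \mathbf{E}_{n-1}\sqrt{B(f_0)^2 + (M_{\le n-1}f_0)^2}$ and similarly for $f_1$, where $M_{\le n-1}$ denotes the gradient in the first $n-1$ coordinates. Next, I would apply the base case (dimension one) to the two-point function on $\{0,1\}$ whose values are the quantities $\mathbf{E}_{n-1}\sqrt{B(f_0)^2+(M_{\le n-1}f_0)^2}$ and the analogous one for $f_1$ — but this requires care, since the base case inequality \eqref{eqn:generalnormtwopoint} is stated for functions valued in $\mathcal{I}$, and these averaged square-root quantities need not land in $\mathcal{I}$. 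The correct route is instead to combine the two fiber inequalities using a pointwise-in-$x$ application of the one-dimensional inequality to the pair $(f_0(x),f_1(x))$ together with the subadditivity property of the map $(a,b,t)\mapsto\sqrt{(a+b)^2+t^2}$-type expressions under averaging; more precisely, one wants a two-variable convexity/Minkowski-type inequality
\[
\mathbf{E}_{n-1}\sqrt{B(g)^2 + v^2} \ge \sqrt{\big(\mathbf{E}_{n-1}\sqrt{B(g)^2+w^2}\big)^2 + (\mathbf{E}_{n-1} u)^2}
\]
relating the averaged square-roots appropriately, which is exactly the kind of statement that makes Bobkov's scheme close.

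Concretely, the cleanest packaging is: define $\Phi(x) = \sqrt{B(f(x))^2 + (M_{\le n-1}f(x))^2}$ thought of as functions $\Phi_0,\Phi_1$ on the fibers; the one-dimensional hypothesis applied to the $\{0,1\}$-valued function $j\mapsto f(x,j)$ says $B(\tfrac12(f_0(x)+f_1(x))) \le \tfrac12\sum_j\sqrt{B(f_j(x))^2 + (f_0(x)-f_1(x))_+^2\delta_{j}}$ appropriately; then one uses the elementary inequality that for vectors, $\mathbf{E}\sqrt{\|u\|^2 + \|v\|^2} \ge \sqrt{\|\mathbf{E} u\|^2 + \|\mathbf{E} v\|^2}$ (Minkowski's integral inequality for the Euclidean norm, i.e. $\|\cdot\|_{L^2_x L^2_{\mathrm{coord}}}$) to pull the fiber-average inside. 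Chaining these gives $B(\mathbf{E}_n f)$ bounded by the full $n$-dimensional right-hand side. I expect the main obstacle to be bookkeeping the interaction between the fiber-averaging (which must use the induction hypothesis, valid only for $\mathcal{I}$-valued functions) and the last-coordinate two-point step: one must arrange the argument so that the one-dimensional hypothesis is only ever applied to genuinely $\mathcal{I}$-valued functions, and the passage from fibers to the whole cube is handled purely by the dimension-free Minkowski inequality for the Euclidean norm, which requires no hypothesis on $B$. This is the standard Bobkov–Talagrand induction, so once the order of operations is fixed the remaining steps are routine.
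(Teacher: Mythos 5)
Your approach is the standard Bobkov induction, which is indeed what the paper invokes: the authors give no proof of their own but cite Bobkov \cite[Lemma~1]{Bob97} and Bobkov--G\"otze \cite[Lemma~2.1]{BG99}. So in spirit you are on the right track, and you correctly identify the ingredients (tensorize on the last coordinate, use the two-point hypothesis in that coordinate, and use the vector Minkowski inequality $|\mathbf{E}v|\le \mathbf{E}|v|$ to reassemble). However, the way you order the steps does not close, and the concern that led you to abandon the first ordering is a red herring.

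The first route you wrote down is in fact fine once you apply the two-point hypothesis to the correct pair. You should not apply \eqref{eqn:generalnormtwopoint} to the pair of averaged square-roots $\bigl(\mathbf{E}_{n-1}\Phi_0,\mathbf{E}_{n-1}\Phi_1\bigr)$; you apply it to $\bigl(\mathbf{E}_{n-1}f_0,\mathbf{E}_{n-1}f_1\bigr)$, which does lie in $\mathcal{I}\times\mathcal{I}$ since $\mathcal{I}$ is an interval. This gives
\[
B(\mathbf{E}_nf)\le \tfrac12\sum_{j\in\{0,1\}}\sqrt{B(\mathbf{E}_{n-1}f_j)^2+\bigl(\mathbf{E}_{n-1}f_j-\mathbf{E}_{n-1}f_{1-j}\bigr)_+^2}.
\]
Then you use monotonicity of $(a,b)\mapsto\sqrt{a^2+b^2}$ in each nonnegative argument, the induction hypothesis $B(\mathbf{E}_{n-1}f_j)\le \mathbf{E}_{n-1}\Phi_j$ with $\Phi_j=\sqrt{B(f_j)^2+(M_{\le n-1}f_j)^2}$, and the elementary convexity bound $(\mathbf{E}(f_j-f_{1-j}))_+\le\mathbf{E}(f_j-f_{1-j})_+=\mathbf{E}\,u_j$ where $u_j=(f_j-f_{1-j})_+=M_nf(\cdot,j)$. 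That yields
\[
B(\mathbf{E}_nf)\le\tfrac12\sum_j\sqrt{(\mathbf{E}\Phi_j)^2+(\mathbf{E}u_j)^2}\le\tfrac12\sum_j\mathbf{E}\sqrt{\Phi_j^2+u_j^2}=\mathbf{E}_n\sqrt{B(f)^2+(Mf)^2},
\]
the middle inequality being Minkowski's integral inequality for the Euclidean norm on $\mathbb{R}^2$ applied to the vector field $x\mapsto(\Phi_j(x),u_j(x))$, and the last equality using $(Mf)(x,j)^2=(M_{\le n-1}f_j)(x)^2+u_j(x)^2$.

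The alternative ordering you sketch in the final paragraph — pointwise two-point step on $j\mapsto f(x,j)$, then Minkowski — is Bobkov's original order, but as written you never invoke the induction hypothesis, so the chain does not terminate. In that ordering, the induction hypothesis must be applied to the midpoint function $g=\tfrac12(f_0+f_1):\{0,1\}^{n-1}\to\mathcal{I}$: start from $B(\mathbf{E}_nf)=B(\mathbf{E}_{n-1}g)\le\mathbf{E}_{n-1}\sqrt{B(g)^2+(Mg)^2}$, then use the pointwise two-point bound $B(g(x))\le\tfrac12\sum_j\sqrt{B(f_j(x))^2+u_j(x)^2}$ together with $M_ig\le\tfrac12(M_if_0+M_if_1)$ and Minkowski to dominate the integrand by $\tfrac12\sum_j\sqrt{B(f)(x,j)^2+(Mf)(x,j)^2}$. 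Either ordering works once the bookkeeping is done carefully; you should commit to one of them, note that $\mathcal{I}$ being an interval is what makes the intermediate quantities ($\mathbf{E}f_j$ or $g(x)$) legal inputs to $B$, and make the application of the induction hypothesis explicit.
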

This is a version of Lemma 2.1 of Bobkov--G\"otze \cite{BG99} and follows from it by rescaling. The proof is by induction on $n$, see \cite[Lemma 1]{Bob97}. For $n=1$, \eqref{eqn:generalnormtwopoint} is referred to as a two-point inequality and may be written as
\begin{equation}\label{eqn:Bobkovtwopt}
B\Big(\tfrac{x+y}{2}\Big) \le \tfrac12 \sqrt{(y-x)^2 + B(y)^2} + \tfrac12 B(x),
\end{equation}
where $\max(f(0),f(1))=y$ and $\min(f(0),f(1))=x$. Observe that \eqref{eqn:Bobkovtwopt} is equivalent to
\[G^1_{\frac12}[B](x,y)\ge 0\]
with $G^1_\beta$ as in \eqref{eqn:Gdef}.

\begin{proof}[Proof of Theorem \ref{thm:asymp}]
It is known that the function $L(x)=x\sqrt{\log_2(1/x)}$ satisfies \eqref{eqn:Bobkovtwopt} when $0\le x\le y\le \frac12$ (see \cite[Lemma 2.2]{BIM23}).
Applying Lemma \ref{prop:generaltwopointinduction} with $\mathcal{I}=[0,\frac12]$ we therefore have
\[ L(\mathbf{E} f)\le \mathbf{E} \sqrt{ L(f)^2 + (Mf)^2} \]
for all $f:\{0,1\}^n\to [0,\frac12]$.
Plugging in $f=\frac12 \mathbf{1}_A$ we have $\mathbf{E} f=\frac12 |A|$ and the inequality becomes
\[ L(\tfrac12|A|) \le \tfrac12 \mathbf{E} (\mathbf{1}_A \sqrt{1 + h_A})\le \tfrac12 (|A| + \mathbf{E} \sqrt{h_A}) \]
so the claim follows.
\end{proof}

The two-point inequality \eqref{eqn:Bobkovtwopt} is a variant of Bobkov's original two-point inequality \cite{Bob97} for the ``two-sided gradient'', i.e.,
\begin{equation}\label{eqn:originalBobkovtwopt}
B\Big(\tfrac{x+y}{2}\Big) \le \tfrac12 \sqrt{\Big(\tfrac{y-x}2\Big)^2 + B(y)^2} + \tfrac12 \sqrt{\Big(\tfrac{y-x}2\Big)^2 + B(x)^2},
\end{equation}
which is closely related and satisfied by the Gaussian isoperimetric profile \cite{Bob97}
\[ I(x) = \varphi(\Phi^{-1}(x)), \]
where $\varphi$ is the standard Gaussian density function and $\Phi$ its cumulative distribution function, i.e.
\[ \varphi(t) = (2\pi)^{-1/2} e^{-t^2/2},\quad \Phi(t) = \int_{-\infty}^t \varphi(s)\,ds. \]
The function $I$ is also characterized by the conditions
\[ I(0)=I(1)=0, I\cdot I''=-1. \]
For a real parameter $w>0$ and $x$ with $w^{-1}(1-x)\in [0,1]$ we define
\[ J_w(x) = \sqrt{2}\cdot w I(w^{-1}(1-x)). \]
Then $J_w(1)=0$ and $J_w''J_w=-2$. Thus, with $J$ being the function in Theorem \ref{thm:isoperimJ},
\begin{equation}\label{eqn:JintermsofI}
J(x) = J_{w_0}(x),
\end{equation}
where $w_0$ is such that $J_{w_0}(\frac12)=\frac12$. One computes $w_0\in [0.895, 0.896]$.

In his seminal paper \cite{Bob97}, Bobkov showed that $I$ is the pointwise maximal non-negative continuous function satisfying \eqref{eqn:originalBobkovtwopt} together with the boundary conditions $I(0)=I(1)=0$.
We are interested in the inequality \eqref{eqn:Bobkovtwopt}.

\begin{lem}\label{lem:Iprimesqconvex}
If a non-negative function $I$ solves $I\cdot I''=-1$, then $I$ is concave and $(I')^2$ is convex.
\end{lem}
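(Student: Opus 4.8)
The plan is to prove both claims by directly exploiting the ODE $I\cdot I'' = -1$ together with the non-negativity of $I$. For concavity, note that $I\geq 0$ and $I''= -1/I$, so $I'' \leq 0$ wherever $I>0$; hence $I$ is concave on the interior of its domain (and the boundary behavior is governed by continuity). So concavity is essentially immediate.

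The substantive part is showing $(I')^2$ is convex, i.e. that $\frac{d^2}{dx^2}(I')^2 \geq 0$. First I would compute $\frac{d}{dx}(I')^2 = 2 I' I''= 2 I' \cdot(-1/I) = -2 I'/I$. Differentiating again, $\frac{d^2}{dx^2}(I')^2 = -2\,\frac{I'' I - (I')^2}{I^2} = -2\,\frac{-1 - (I')^2}{I^2} = \frac{2(1+(I')^2)}{I^2} \geq 0$, using $I''I = -1$ in the numerator. This shows $(I')^2$ is convex on the open interval where $I>0$, which together with continuity gives the claim on the closed domain. The computation is short, so the only thing to be careful about is the regularity/endpoint discussion: one should note that a non-negative solution of $I\cdot I'' = -1$ is automatically $C^2$ (in fact smooth) on the interior, so the second-derivative computations are legitimate, and that convexity/concavity on the open interval extends to the closure by the continuity hypothesis built into the statement.

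The main (minor) obstacle is therefore not any analytic difficulty but making sure the hypotheses are used cleanly: the statement as phrased does not specify the domain, so I would phrase the argument as taking place on any interval on which $I$ is a non-negative solution, deduce the inequalities $I'' \leq 0$ and $(I')^{2\prime\prime}\geq 0$ pointwise on the interior, and then conclude. In our applications $I$ is either the Gaussian profile on $[0,1]$ or the rescaled variant $J_w$, and in both cases $I$ vanishes only at the endpoints, so the division by $I^2$ is harmless in the interior; I would remark on this so that the lemma can be applied verbatim later.
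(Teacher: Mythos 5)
Your proof is correct and follows essentially the same route as the paper: concavity from $I''=-1/I\le 0$, then computing $((I')^2)'=-2I'/I$ and $((I')^2)''=2(1+(I')^2)/I^2\ge 0$ using the ODE. The extra remarks on regularity and endpoint behavior are sensible but not part of the paper's argument.
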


\begin{proof}
The first claim follows from $I''=-1/I\le 0$. For the second claim, we compute $((I')^2)' = 2 I' I'' = -2I'/I$ and
$((I')^2)'' = -2I''/I + 2(I')^2/I^2 = 2 (1+(I')^2)/I^2 \ge 0$.
\end{proof}

\begin{prop}\label{prop:twoptbobkov}
Let $I$ be a non-negative function so that $I''\cdot I=-1$ and $I'\le 0$ hold on an interval $\mathcal{I}$.
Then we have for all $x,y\in \mathcal{I}$ with $x\le y$ that
\begin{equation}\label{eqn:onesidedtwoptI} \sqrt{(\tfrac12 (y-x)^2 + I(y)^2} + I(x) \ge 2 I(\tfrac{x+y}{2}).
\end{equation}
\end{prop}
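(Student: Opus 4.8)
The plan is to fix $x\le y$ in $\mathcal{I}$, set $m=\tfrac{x+y}{2}$ and $r=\tfrac{y-x}{2}\ge 0$, and reduce \eqref{eqn:onesidedtwoptI} to a one-variable inequality. Note that $I''\cdot I=-1$ forces $I>0$, hence $I\in C^2$, on $\mathcal{I}$. Since the left-hand side of \eqref{eqn:onesidedtwoptI} is $\ge 0$, the inequality is equivalent to $A\ge B$, where $A=(2r^2+I(m+r)^2)^{1/2}\ge 0$ and $B=2I(m)-I(m-r)$; and since $A\ge 0$, the bound $A^2\ge B^2$ already implies $A\ge B$. I would therefore aim to show that
\[ \phi(t)=2t^2+I(m+t)^2-\bigl(2I(m)-I(m-t)\bigr)^2\ \ge\ 0 \qquad\text{for all }t\in[0,r] \]
(here $m\pm t\in[x,y]\subseteq\mathcal{I}$ since $\mathcal{I}$ is an interval), because $\phi(r)=A^2-B^2$.

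Since $\phi(0)=0$, it suffices to prove $\phi'(0)=0$ and $\phi''(t)\ge 0$ on $[0,r]$; then $\phi'\ge\phi'(0)=0$ and hence $\phi\ge\phi(0)=0$ on $[0,r]$. Differentiating and using the ODE $I''=-1/I$ to eliminate all second derivatives of $I$ should give
\[ \phi'(t)=4t-4I(m)\,I'(m-t)+2I(m-t)\,I'(m-t)+2I(m+t)\,I'(m+t), \]
so that $\phi'(0)=(-4+2+2)\,I(m)I'(m)=0$, and then
\[ \phi''(t)=4\,\frac{I(m-t)-I(m)}{I(m-t)}+2\bigl((I'(m+t))^2-(I'(m-t))^2\bigr). \]
The first term is $\ge 0$ because $I'\le 0$ makes $I$ nonincreasing, so $I(m-t)\ge I(m)>0$; the second term is $\ge 0$ because $(I')^2$ is nondecreasing on $\mathcal{I}$ (from $I''=-1/I<0$ the derivative $I'$ is decreasing, so $-I'=|I'|$ is nondecreasing on the region where $I'\le 0$) and $m+t\ge m-t$. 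Lemma \ref{lem:Iprimesqconvex} gives even more, namely convexity of $(I')^2$, but only this monotonicity is needed. Hence $\phi''\ge 0$ on $[0,r]$ and the proof is complete.

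The only genuine obstacle is choosing a reduction in which the auxiliary function is this well-behaved. Squaring to clear the square root, and --- crucially --- freezing the midpoint $m$ while letting only the half-width $t$ vary, is what makes both $\phi(0)$ and $\phi'(0)$ vanish and forces the ODE $I''=-1/I$ to collapse $\phi''$ into a sum of two obviously nonnegative terms. (If one instead works directly with $F(t)=(2t^2+I(m+t)^2)^{1/2}+I(m-t)-2I(m)$, one still gets $F(0)=F'(0)=0$, but $F''$ does not simplify nearly as cleanly, so squaring first is preferable.) The rest --- the two derivative computations and the elementary sign checks --- is routine once the defining ODE is invoked.
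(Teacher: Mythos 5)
Your proof is correct and follows essentially the same route as the paper: fix the midpoint $m$, vary the half-width $t$, square both sides, and show the auxiliary function $\phi$ (which is exactly the paper's $G_c$) satisfies $\phi(0)=\phi'(0)=0$ and $\phi''\ge0$ via the ODE $I''=-1/I$ and the hypothesis $I'\le0$. You do shave off two small steps --- noting that $A\ge0$ together with $A^2\ge B^2$ already gives $A\ge B$ (so the paper's appeal to concavity to show $2I(m)-I(m-t)\ge0$ is unnecessary), and using monotonicity of $(I')^2$ directly rather than invoking its convexity from Lemma~\ref{lem:Iprimesqconvex} --- but these are minor streamlinings of the same argument.
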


\begin{rem}
The crucial difference to \cite{Bob97} is the additional requirement that $I'\le 0$.
This requirement is necessary: the conclusion fails if it does not hold.
\end{rem}

\begin{proof}
This is inspired by Bobkov's argument in \cite{Bob97}.
Let $c=\frac{x+y}{2}$ and $u=\frac{y-x}{2}$. Then $x=c-u, y=c+u$.
Let us fix $c\in \mathcal{I}$. Write $\mathcal{I}=[a,b]$.
The variable $u$ ranges in the interval $[0, c_*]$ with
$c_* = \mathrm{min}(c-a, b-c)$.
We need to show for all $u\in [0, c_*]$:
\[ \sqrt{2u^2 + I(c+u)^2} \ge 2 I(c) - I(c-u)\]
Since $2I(c) - I(c-u)\ge 0$ (by concavity of $I$), this is equivalent to
\[ 2u^2 + I(c+u)^2 \ge 4 I(c)^2 + I(c-u)^2 - 4 I(c) I(c-u). \]
Define
\[ G_c(u) = I(c+u)^2 - I(c-u)^2 + 2u^2 - 4I(c)^2 + 4 I(c) I(c-u).\]
We have
\[ G_c'(u) = 2 I'(c+u)I(c+u) + 2 I'(c-u)I(c-u) + 4u - 4 I(c) I'(c-u),\]
\[ G_c''(u) = 2(I'(c+u)^2 - I'(c-u)^2) + 4(1- \tfrac{I(c)}{I(c-u)}) ,\]
where we have used $I\cdot I''=-1$ to obtain the last equation.
Since $I$ is decreasing on $\mathcal{I}$ we have $I(c-u)\ge I(c)$, so
\[ G''_c(u) \ge 2 (I'(c+u)^2 - I'(c-u)^2). \]
Recall that if $f$ is convex, then for all $x\le y$,
$f(y)-f(x) \ge (y-x)f'(x).$
In particular, since $(I')^2$ is convex by Lemma \ref{lem:Iprimesqconvex},
\[ I'(c+u)^2 - I'(c-u)^2 \ge 2u ((I')^2)'(c-u). \]
Observe
\[ ((I')^2)' = 2 I' \cdot I'' = - 2 I' / I \ge 0, \]
since $I'\le 0$ on the interval $\mathcal{I}$.
Thus we have shown that $G''_c\ge 0$ on $[0, c_*]$.
Since $G'_c(0) = 0$ we must have $G'_c\ge 0$ on $[0, c_*]$.
Since $G_c(0)=0$, also $G_c\ge 0$ on $[0, c_*]$, as required.
\end{proof}

Applying Proposition \ref{prop:twoptbobkov} to $I=J_w/\sqrt{2}$ we obtain:
\begin{cor}\label{cor:JsatisfiesBobkovtwopt}
Let $0<w\le 1$.
Then for all $1-\frac{w}{2}\le x\le y\le 1$:
\[ \sqrt{(y-x)^2 + J_w(y)^2} + J_w(x) \ge 2 J_w(\tfrac{x+y}{2}). \]
\end{cor}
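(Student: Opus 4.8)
The plan is to deduce the corollary from Proposition~\ref{prop:twoptbobkov} applied to the rescaled function $\hat I:=J_w/\sqrt2$. Writing $\mathcal I=[1-\tfrac w2,1]$, I would verify that $\hat I$ satisfies on $\mathcal I$ the three hypotheses of that proposition: $\hat I\ge 0$, $\hat I\cdot\hat I''=-1$, and $\hat I'\le 0$. The first two are immediate; the third is where the hypothesis $1-\tfrac w2\le x$ of the corollary enters, and verifying it (and checking that it fails outside $\mathcal I$) is really the only point requiring care.

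In detail: since $J_w(x)=\sqrt2\,w\,I(w^{-1}(1-x))$ with $I\ge 0$ and $w>0$, we have $\hat I(x)=w\,I(w^{-1}(1-x))\ge 0$ on $\mathcal I$. From $J_w''J_w=-2$ we get $\hat I''\hat I=\tfrac12 J_w''J_w=-1$. For the sign of $\hat I'$, differentiating gives $\hat I'(x)=-I'(w^{-1}(1-x))$. The Gaussian isoperimetric profile $I(t)=\varphi(\Phi^{-1}(t))$ is increasing on $[0,\tfrac12]$ (as $\Phi^{-1}$ is increasing and $\varphi$ is increasing on $(-\infty,0]$), so $I'\ge 0$ there; and $w^{-1}(1-x)\in[0,\tfrac12]$ precisely when $x\ge 1-\tfrac w2$. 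Hence $\hat I'\le 0$ on $\mathcal I$, whereas on $(1-w,1-\tfrac w2)$ one would have $\hat I'>0$ — which is exactly why the corollary restricts to $x\ge 1-\tfrac w2$.

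Finally I would invoke Proposition~\ref{prop:twoptbobkov}. To sidestep the degeneracy of the ODE at the right endpoint, where $\hat I$ vanishes, apply it on the compact subinterval $[1-\tfrac w2,1-\epsilon]$ for $0<\epsilon<\tfrac w2$, on which $\hat I$ is smooth and the three hypotheses hold; this yields, for $1-\tfrac w2\le x\le y\le 1-\epsilon$,
\[ \sqrt{\tfrac12(y-x)^2+\hat I(y)^2}+\hat I(x)\ \ge\ 2\hat I\big(\tfrac{x+y}2\big), \]
and letting $\epsilon\to 0^+$ and using continuity of $J_w$ (with $J_w(1)=0$) extends it to all $1-\tfrac w2\le x\le y\le 1$. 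Substituting $\hat I=J_w/\sqrt2$, so that $\hat I(x)=\tfrac1{\sqrt2}J_w(x)$ and $\hat I(y)^2=\tfrac12 J_w(y)^2$, and multiplying through by $\sqrt2$ gives
\[ \sqrt{(y-x)^2+J_w(y)^2}+J_w(x)\ \ge\ 2J_w\big(\tfrac{x+y}2\big), \]
which is the assertion. The main obstacle, as noted, is pinning down the sign of $\hat I'$ and hence the precise range of validity; everything else is a routine rescaling and a continuity argument at the degenerate endpoint.
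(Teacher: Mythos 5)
Your proof is correct and takes essentially the same approach as the paper, which simply states ``Applying Proposition~\ref{prop:twoptbobkov} to $I=J_w/\sqrt{2}$ we obtain'' the result. You have filled in the verification of the hypotheses (in particular the sign of $\hat I'$ on $[1-\tfrac w2,1]$, which the paper implicitly accepts and only comments on in Remark~\ref{rem:Jbobkovtwoptfailure}) and added a careful limiting argument at the degenerate right endpoint, but the route is the same.
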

\begin{rem}\label{rem:Jbobkovtwoptfailure}
The range of $x,y$ is optimal: if $x<1-\frac{w}{2}$, the inequality fails because $J_w'$ changes sign at $1-\frac{w}2$, so the necessary condition $I'\le 0$ in Proposition \ref{prop:twoptbobkov} is not satisfied.
This is the underlying reason why we cannot prove Theorem \ref{thm:mainisoperim} for $\beta=\frac12$.
\end{rem}
Setting $w=w_0$, we obtain in particular that \eqref{eqn:Bobkovtwopt} holds for $B=J$ as long as $y\ge x\ge x_0,$ where
\begin{equation}\label{eqn:x0def}
x_0 = 1-\tfrac{w_0}2 \in [0.552, 0.553].
\end{equation}
We will rely on this in the proof of Theorem \ref{thm:maintwopt} in \S \ref{sec:maintwopt}.

\begin{rem}\label{rem:asympnearone}
Corollary \ref{cor:JsatisfiesBobkovtwopt} and Lemma \ref{prop:generaltwopointinduction}
imply also that
\[ J(\mathbf{E} f)\le \mathbf{E} \sqrt{ J(f)^2 + (Mf)^2} \]
holds for all $f:\{0,1\}^n\to [x_0, 1]$.
Letting $f=(1-x_0)\mathbf{1}_A + x_0$ we obtain the isoperimetric inequality
\[ \mathbf{E} \sqrt{h_A} \ge (1-x_0)^{-1}(J((1-x_0)|A|+x_0)-\|J\|_\infty (1-|A|)), \]
which is no good when $|A|$ is away from $1$, but improves on \eqref{eqn:actualisoperimhalf} as $|A|\to 1^-$ since the function on the right hand side is asymptotically equivalent to $J(|A|)$.
\end{rem}

\section{Computer-assisted proofs}\label{sec:auto}
It can be quite challenging to prove that a given function of several real variables is strictly positive.
In this section we describe a simple but powerful method to automate positivity proofs for functions on compact rectangles that we will use routinely throughout the paper. This is likely well-known to readers familiar with computer-assisted proofs.

\subsection{Dyadic partitioning}
By a \emph{rectangle} we mean a product of closed intervals
$[a_1, b_1]\times \cdots \times [a_n, b_n]\subset \mathbb{R}^n$ for $a,b\in \mathbb{R}^n$ with $a_i<b_i$ for all $i=1,\dots,n$. We will use the notation $[a,b]$ for such a rectangle.
A map $\underline{f}:[a,b]\times [a,b]\to \mathbb{R}$ is called a {\em tight lower bound} of a function $f:[a,b]\to \mathbb{R}$ if
\[ f(x)\ge \underline{f}(\underline{x},\overline{x})\quad\text{for all}\; x\in [\underline{x}, \overline{x}]\subset [a,b]\]
and $\underline{f}(\underline{x},\overline{x})\to f(x)$ as $|\overline{x}-\underline{x}|\to 0$, $x\in [\underline{x},\overline{x}]$.

A finite partition $\mathcal{P}$ of $[a,b]$ into rectangles such that $\underline{f}(c,d)>0$ for all $[c,d]\in\mathcal{P}$ will be called \emph{admissible}.
The following compactness observation reduces verification of the infinitely many conditions $f(x)>0$ to only finitely many evaluations.
\begin{fact}\label{fact:admissiblepartition}
Suppose a function $f$ on $[a,b]$ has a tight lower bound $\underline{f}$. Then $f>0$ on $[a,b]$ if and only if there exists an admissible partition.
\end{fact}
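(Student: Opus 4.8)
The plan is to prove the two implications separately. The ``if'' direction is the easy one: suppose an admissible partition $\mathcal{P}$ exists. Given any $x\in[a,b]$, by definition of a partition there is some rectangle $[c,d]\in\mathcal{P}$ with $x\in[c,d]$. Since $[c,d]\subset[a,b]$ and $x\in[c,d]=[\underline{x}',\overline{x}']$ with $\underline{x}'=c$, $\overline{x}'=d$, the tight lower bound property gives $f(x)\ge\underline{f}(c,d)>0$, where positivity is precisely the admissibility hypothesis. Hence $f>0$ on $[a,b]$.

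For the ``only if'' direction, assume $f>0$ on $[a,b]$ and construct an admissible partition by a bisection argument. Start with the trivial partition $\{[a,b]\}$. Repeatedly apply the following step: if some rectangle $R=[c,d]$ in the current partition has $\underline{f}(c,d)\le 0$, subdivide $R$ along each coordinate at its midpoint into $2^n$ congruent subrectangles, replacing $R$ by these pieces. I claim this process terminates. Suppose for contradiction it does not; then the bisection tree has an infinite branch, which by compactness (nested nonempty compact rectangles with diameters shrinking to $0$) determines a single point $x_*\in[a,b]$, and every rectangle $R_j=[c_j,d_j]$ on this branch satisfies $\underline{f}(c_j,d_j)\le 0$ while containing $x_*$ and having $|d_j-c_j|\to 0$. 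By the tightness property of $\underline{f}$, $\underline{f}(c_j,d_j)\to f(x_*)$, so $f(x_*)\le 0$, contradicting $f>0$ on $[a,b]$. Therefore the process halts after finitely many steps, and the resulting finite partition $\mathcal{P}$ into rectangles satisfies $\underline{f}(c,d)>0$ for every $[c,d]\in\mathcal{P}$, i.e.\ it is admissible.

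The only genuine subtlety is making the compactness argument precise: one should note that at each bisection step all rectangles in the current partition have nonempty interior and the partition remains finite, so ``infinitely many steps'' forces, by König's lemma applied to the finitely branching subdivision tree, an infinite descending chain of rectangles; the intersection of this chain is a single point because the diameters are halved at each level. The convergence $\underline{f}(c_j,d_j)\to f(x_*)$ is then exactly the defining property of a tight lower bound (with $x=x_*\in[\underline{x},\overline{x}]=[c_j,d_j]$ for every $j$). No estimate beyond this is needed, and in particular one does not need any quantitative modulus of convergence for $\underline{f}$ --- mere pointwise tightness along shrinking rectangles suffices. I expect the compactness/termination step to be the main (though still routine) obstacle; the ``if'' direction and the bookkeeping of the partition are immediate.
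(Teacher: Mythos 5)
Your proof is correct, and both directions are sound. The ``if'' direction matches the paper verbatim, but your ``only if'' direction takes a genuinely different route: you argue by contradiction via repeated midpoint bisection and K\"onig's lemma, showing that non-termination would force an infinite nested chain of rectangles shrinking to a point $x_*$ on which $\underline{f}\le 0$ persists, contradicting $\underline{f}(c_j,d_j)\to f(x_*)>0$. The paper instead argues directly: for each $x$ with $f(x)>0$ the tightness property yields a radius $\varepsilon_x$, the $\varepsilon_x/2$-neighborhoods form an open cover of the compact rectangle $[a,b]$, and a finite subcover (Heine--Borel) induces an admissible partition. Both are compactness arguments, but yours has a concrete advantage: it is phrased exactly as the paper's $\textsc{Partition}_n$ procedure, so it establishes directly --- without any detour --- the remark the paper makes afterward, namely that if $f>0$ then recursive dyadic partitioning will itself succeed once $maxDepth$ is large enough. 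The paper's Heine--Borel route proves existence of \emph{some} admissible partition, which is slightly less immediately tied to the dyadic algorithm actually run; your bisection-tree argument is the more self-contained way to get that algorithmic consequence. The one step in your writeup worth making fully explicit is that ``runs forever'' implies the subdivision tree is infinite (each subdivision adds $2^n$ nodes), which is what licenses the appeal to K\"onig's lemma; you gesture at this and it is routine, but it is the hinge of the contradiction.
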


\begin{proof}
The `if' part follows from the lower bound property and the definition of an admissible partition.
For the `only if' part let $f>0$. Then for every $x\in [a,b]$ exists $\varepsilon_x>0$ such that $\underline{f}(\underline{x},\overline{x})>0$ for all $\underline{x}, \overline{x}\in [a,b]$ with $x\in [\underline{x},\overline{x}]$ and $\overline{x}-\underline{x}\le \varepsilon_x$. The collection of $\varepsilon_x/2$-neighborhoods around each point $x$ forms an open cover of $[a,b]$. By the Heine-Borel theorem, there exists a finite subcover, which induces an admissible partition.
\end{proof}
\begin{rem}
It is not required that $f$ is continuous; however, existence of a tight lower bound implies lower semicontinuity of $f$.
\end{rem}

Admissible partitions $\mathcal{P}$ can be practically determined by {\em recursive dyadic partitioning}, where a given rectangle is partitioned into $2^n$ congruent subrectangles called its {\em children} by cutting along the midpoint in each of the $n$ coordinates:

\algnewcommand{\Fail}{\textbf{Fail}}
\begin{algorithmic}
\Procedure{Partition$_n$}{$\underline{f},[a,b]$, $depth$}\Comment $depth$ is initially $0$
\If{$\underline{f}(a,b)>0$} \Return $\{[a,b]\}$
\ElsIf{$depth \ge maxDepth$} \Fail\Comment $maxDepth<\infty$

\Else
\State $\mathcal{P}:=\emptyset$
\For{each child rectangle\; $[a',b']$}:
\State{Call \Call{Partition$_n$}{$v, [a',b'], depth+1$}}
\If{not failed}: add result to $\mathcal{P}$
\Else: \Fail
\EndIf
\EndFor

\State \Return{$\mathcal{P}$}
\EndIf
\EndProcedure\\

\State \Call{Partition$_n$}{$\underline{f},[a,b]$} := \Call{Partition$_n$}{$\underline{f},[a,b], 0$}\\
\end{algorithmic}
\begin{rem}
We will only use this when $n = 1$ or $n = 2$. In the case of $n = 2$, we will slightly abuse notation and write
$\underline{f}(\underline{x}_1,\overline{x}_1,\underline{x}_2,\overline{x}_2)$ instead of $\underline{f}((\underline{x}_1,\underline{x_2}),(\overline{x}_1,\overline{x}_2))$.
\end{rem}

Running $\textsc{Partition$_n$}(\underline{f},[a,b])$ either fails or returns an admissible partition, thus providing a proof that $f>0$ holds on $[a,b]$. Figure \ref{fig:critpartition} visualizes an example of an admissible partition that is particularly critical to this paper.
A finite value of $maxDepth$ ensures that the procedure terminates.

Moreover, Fact \ref{fact:admissiblepartition} implies that if it is true that $f>0$ holds on $[a,b]$, then the procedure will always succeed in producing a proof of this, provided that $maxDepth$ is chosen large enough (in this paper $maxDepth=12$ will suffice). In practice, limitations are imposed by memory and time constraints.
Further limitations of the method are the requirement of \emph{strict} positivity as well as the need for a (good enough) tight lower bound $\underline{f}$ given by an explicit formula.

The proof of positivity is established by providing the pair $(\underline{f}, \mathcal{P})$. Verification of admissibility can, in principle, be carried out using any method of rigorous computation, including manual computation. However, the most practical approach is to use a computer. All necessary computations for this paper can be completed within a few seconds on a standard laptop. We use interval arithmetic to account for all numerical and rounding errors inherent in numerical function evaluation using floating-point arithmetic so that the bounds produced by the computer are provably correct and this can be verified by inspection of the source code.

\subsection{Interval arithmetic}
Interval arithmetic is a well-known method for producing rigorous numerical estimates that has gained widespread use in mathematics over recent decades.
A fundamental issue is that generic real numbers are not represented exactly. Instead, one uses \emph{floating point numbers}, which for the purpose of this discussion can be considered to be rationals of the form $k 2^{-n}$ with $n,k\in\mathbb{Z}$, subject to size restrictions.
The idea of interval arithmetic is to represent (an approximation of) a real number $x$ by a small interval $[\underline{x},\overline{x}]$ with floating point numbers as endpoints so that $x$ is guaranteed to lie inside the interval.
When evaluating a function $f(x)$ we instead evaluate an associated \emph{interval enclosure} $\underline{\overline{f}}(\underline{x},\overline{x})$ which is an interval-valued map with values $[\underline{f}(\underline{x},\overline{x}), \overline{f}(\underline{x},\overline{x})]$
so that
\[ f(x)\in \overline{\underline{f}}(\underline{x},\overline{x}) \]
whenever $x\in [\underline{x},\overline{x}]$ and $\underline{\overline{f}}(x,x)=[f(x),f(x)]$. Arithmetic operations and standard functions are extended to the `degenerate' floating point values $\infty,-\infty,\text{NaN}$ (`not a number', for undefined operations) with the usual semantics. Note that the canonical interval extension of the order relation on real numbers is only a partial order: if $\underline{\overline{x}}>\underline{\overline{y}}$ does not hold for intervals $\underline{\overline{x}},\underline{\overline{y}}$, this does not imply $\underline{\overline{x}}\le \underline{\overline{y}}$.
Given a formula for $f$ involving only standard arithmetic operations and functions with known piecewise monotonicity, an interval enclosure can be automatically constructed. For further details on interval arithmetic and computer-assisted proofs see e.g. \cite{Tuc11}, \cite{Rump}, \cite{GomezSerrano} and references therein.
\begin{rem}
One could rely on automatically constructed interval enclosures to provide tight lower bounds for use in partitioning. However, in our applications the automatic enclosures are sometimes not quite sufficient. Because of this and for clarity, we prefer to always manually provide explicit tight lower bounds.
\end{rem}

\subsection{Source code}
Each use of $\textsc{Partition}_n$ throughout the paper has been implemented using \emph{FLINT/Arb}, an open source library for arbitrary precision interval arithmetic that produces provably correct error bounds \cite{Arb}, \cite{Flint}.
The code verifying numerical claims for this paper is available on GitHub at
\[
\texttt{\href{https://github.com/roos-j/dir24-isoperim}{https://github.com/roos-j/dir24-isoperim}}
\]
Every claim verified in this manner will be tagged with ({\tiny\faCar}).
For readers who prefer different methods of verification, we provide admissible partitions for each of these claims as an ancillary file to our arXiv submission (\texttt{partitions.py}; this file is automatically generated by the verification code).\\

For convenience we include a supplemental Mathematica file, which also implements partitioning (in standard double floating point precision) and contains additional numerical and symbolic calculations that the reader may find helpful when reading the paper. Mathematica is not required or relied on for the logical completeness of our proofs.

\section{Auxiliary estimates}\label{sec:prelim}
In this section we collect various estimates that are used in this paper and may also be useful in the future.

\subsection{Small exponents versus large exponents}

\begin{lem}\label{lem:holder}
If $\mathbf{E} h_A^\beta\ge B(|A|)>0$ holds for some $\beta>0$, then for all $\tilde{\beta}\ge \beta$,
\[ \mathbf{E} h_A^{\tilde{\beta}}\ge B(|A|)^{\frac{\tilde{\beta}}\beta} |A|^{-\frac{\tilde{\beta}}{\beta}+1}. \]
In particular, if \eqref{eqn:isoperimhalf} holds for $\beta_0$, then it holds for all $\beta\ge \beta_0$.
\end{lem}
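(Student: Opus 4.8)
\emph{Proof proposal.} The plan is to deduce the inequality from H\"older's inequality on the probability space $(\{0,1\}^n,\mathbf{E})$, exploiting the fact that $h_A$ is supported on $A$: since $h_A(x)=0$ for $x\notin A$, we have $h_A^{\beta}=h_A^{\beta}\mathbf{1}_A$ pointwise for every $\beta>0$. First, observe that the hypothesis $\mathbf{E} h_A^\beta\ge B(|A|)>0$ forces $A\neq\emptyset$, hence $|A|>0$, which disposes of the degenerate case and makes all the exponentiations below legitimate.

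Assuming $\tilde\beta>\beta$ (the case $\tilde\beta=\beta$ being trivial), set $p=\tilde\beta/\beta>1$ and let $q$ be its conjugate exponent, so $1/p+1/q=1$. Applying H\"older's inequality to the product $h_A^{\beta}=h_A^{\beta}\cdot\mathbf{1}_A$ and using $\mathbf{1}_A^q=\mathbf{1}_A$,
\[ \mathbf{E} h_A^\beta=\mathbf{E}\big(h_A^{\beta}\mathbf{1}_A\big)\le \big(\mathbf{E} h_A^{\beta p}\big)^{1/p}\big(\mathbf{E}\mathbf{1}_A\big)^{1/q}=\big(\mathbf{E} h_A^{\tilde\beta}\big)^{\beta/\tilde\beta}\,|A|^{1-\beta/\tilde\beta}. \]
Combining with the hypothesis $\mathbf{E} h_A^\beta\ge B(|A|)$ gives $\big(\mathbf{E} h_A^{\tilde\beta}\big)^{\beta/\tilde\beta}\ge B(|A|)\,|A|^{\beta/\tilde\beta-1}$, and raising both sides (which are positive) to the power $\tilde\beta/\beta$ yields
\[ \mathbf{E} h_A^{\tilde\beta}\ge B(|A|)^{\tilde\beta/\beta}\,|A|^{1-\tilde\beta/\beta}=B(|A|)^{\tilde\beta/\beta}\,|A|^{-\tilde\beta/\beta+1}, \]
which is the claimed bound.

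For the ``in particular'' statement, take $B=L_{\beta_0}$, i.e.\ $B(x)=x(\log_2(1/x))^{\beta_0}$, which is positive on $(0,\tfrac12]$; so if \eqref{eqn:isoperimhalf} holds for $\beta_0$ then the hypothesis is met for any $A$ with $0<|A|\le\tfrac12$. Substituting into the displayed conclusion with $\beta=\beta_0$ and simplifying the powers of $|A|$ and of $\log_2(1/|A|)$ gives $\mathbf{E} h_A^{\tilde\beta}\ge |A|(\log_2(1/|A|))^{\tilde\beta}=L_{\tilde\beta}(|A|)$, i.e.\ \eqref{eqn:isoperimhalf} for the exponent $\tilde\beta$ (here $|A|\le\tfrac12$ ensures $\log_2(1/|A|)\ge 1>0$, so all quantities are well-defined). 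There is essentially no obstacle; the only points meriting care are the support identity $h_A^{\beta}=h_A^{\beta}\mathbf{1}_A$ — which is what replaces the constant $1$ by $\mathbf{1}_A$ in H\"older and thereby produces the factor $|A|^{1-\tilde\beta/\beta}$ — and the use of positivity of $B(|A|)$ to exclude $|A|=0$.
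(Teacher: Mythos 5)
Your proof is correct and is essentially the same as the paper's: both apply H\"older's inequality with exponent $p=\tilde\beta/\beta$ to $h_A^\beta=h_A^\beta\mathbf{1}_A$, obtaining $B(|A|)\le \mathbf{E} h_A^\beta\le(\mathbf{E} h_A^{\tilde\beta})^{1/p}|A|^{1-1/p}$ and rearranging. You spell out the ``in particular'' verification with $B=L_{\beta_0}$ more explicitly than the paper, but there is no substantive difference.
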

\begin{proof}
Let $p=\tilde{\beta}/\beta\ge 1$. By the assumption and H\"older's inequality,
\[ B(|A|)\le \mathbf{E} h_A^{\beta} \le (\mathbf{E} h_A^{p\beta})^{\frac1p} |A|^{1-\frac1p}, \]
which implies the claim.
\end{proof}

\subsection{Two-point inequalities}

Suppose we are trying to prove an inequality of the form
\begin{equation}\label{eqn:maxtwofctineq}
\mathcal{L}(x,y,B(x),B(y))\ge B(\tfrac{x+y}{2})
\end{equation}
for some function $B$ on $[a,b]$ with $\mathcal{L}(x,y,u,v)$ monotone increasing in $u,v$ for all $a\le x\le y\le b$.
A well-known observation is that if $B_1,B_2$ satisfy \eqref{eqn:maxtwofctineq} for all $a\le x\le y\le b$, then the same holds for $B=\max(B_1,B_2)$. We will use this observation with minimal hypotheses as follows.

\begin{lem}\label{lem:maxlemma2}
Assume for $a\le x\le y\le b$ that
\begin{enumerate}
\item \eqref{eqn:maxtwofctineq} holds for $B_1$ when $B_1(\frac{x+y}{2})\ge B_2(\frac{x+y}2)$
\item \eqref{eqn:maxtwofctineq} holds for $B_2$ when $B_1(\frac{x+y}{2})< B_2(\frac{x+y}2)$
\end{enumerate}
Then \eqref{eqn:maxtwofctineq} holds for $B=\max(B_1,B_2)$ for all $a\le x\le y\le b$.
\end{lem}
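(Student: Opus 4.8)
The plan is to reduce the claim for $B=\max(B_1,B_2)$ to a simple case distinction based on which of $B_1,B_2$ attains the maximum at the midpoint $\tfrac{x+y}{2}$, exploiting the assumed monotonicity of $\mathcal{L}$ in its last two slots. Fix $a\le x\le y\le b$. There are two cases according to the comparison of $B_1(\tfrac{x+y}{2})$ and $B_2(\tfrac{x+y}{2})$, and these correspond exactly to hypotheses (1) and (2).

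First, suppose $B_1(\tfrac{x+y}{2})\ge B_2(\tfrac{x+y}{2})$, so that $B(\tfrac{x+y}{2})=B_1(\tfrac{x+y}{2})$. By hypothesis (1), $\mathcal{L}(x,y,B_1(x),B_1(y))\ge B_1(\tfrac{x+y}{2})=B(\tfrac{x+y}{2})$. Since $B=\max(B_1,B_2)$ we have $B(x)\ge B_1(x)$ and $B(y)\ge B_1(y)$, and monotonicity of $\mathcal{L}$ in its third and fourth arguments gives $\mathcal{L}(x,y,B(x),B(y))\ge \mathcal{L}(x,y,B_1(x),B_1(y))\ge B(\tfrac{x+y}{2})$, which is \eqref{eqn:maxtwofctineq} for $B$. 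Second, suppose $B_1(\tfrac{x+y}{2})< B_2(\tfrac{x+y}{2})$, so that $B(\tfrac{x+y}{2})=B_2(\tfrac{x+y}{2})$. By hypothesis (2), $\mathcal{L}(x,y,B_2(x),B_2(y))\ge B_2(\tfrac{x+y}{2})=B(\tfrac{x+y}{2})$, and again $B(x)\ge B_2(x)$, $B(y)\ge B_2(y)$, so monotonicity yields $\mathcal{L}(x,y,B(x),B(y))\ge \mathcal{L}(x,y,B_2(x),B_2(y))\ge B(\tfrac{x+y}{2})$. In both cases \eqref{eqn:maxtwofctineq} holds for $B$ at the chosen $x,y$, and since $x,y$ were arbitrary in $a\le x\le y\le b$, the lemma follows.

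There is essentially no obstacle here: the only thing to be careful about is matching the bookkeeping of hypotheses (1) and (2) to the correct case, and recognizing that the monotonicity of $\mathcal{L}$ in the last two variables is precisely what lets us pass from the bound for $B_i$ to the bound for $B$ after replacing $B_i(x),B_i(y)$ by the larger values $B(x),B(y)$. It is worth remarking that this is the minimal-hypothesis form of the standard observation: one does not need $B_1,B_2$ to each satisfy \eqref{eqn:maxtwofctineq} outright, only on the respective sub-regions where each dominates at the midpoint, which is exactly the flexibility used when applying the lemma to $\tilde{b}_{\beta_0}=\max(b_{\beta_0},\tilde L)$ in \S\ref{sec:mainthmproofs}.
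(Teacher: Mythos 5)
Your proof is correct and matches the paper's argument exactly: the same case split on which of $B_1(\tfrac{x+y}{2})$, $B_2(\tfrac{x+y}{2})$ is larger, followed by the same use of monotonicity of $\mathcal{L}$ in its last two arguments to upgrade from $B_i(x),B_i(y)$ to $B(x),B(y)$.
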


\begin{proof}
Fix $a\le x\le y\le b$. If $B_1(\frac{x+y}{2})\ge B_2(\frac{x+y}2)$, then
\[ \mathcal{L}(x,y,B(x),B(y))\ge \mathcal{L}(x,y,B_1(x),B_1(y)) \ge B_1(\tfrac{x+y}{2}) = B(\tfrac{x+y}{2}) \]
and the case $B_1(\frac{x+y}{2})< B_2(\frac{x+y}2)$ is analogous.
\end{proof}

\begin{lem}\label{lem:GLvsGR}
Let $\beta\in [\frac12,1)$
and $t,s>0$. Then
\[ (t^{1/\beta}+s^{1/\beta})^\beta \ge t + (2^\beta-1) s\]
holds if and only if $t\le s$.
\end{lem}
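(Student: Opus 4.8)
The plan is to reduce the two-variable inequality to a one-variable statement by homogeneity. Both sides of $(t^{1/\beta}+s^{1/\beta})^\beta \ge t + (2^\beta-1)s$ are homogeneous of degree $1$ in $(t,s)$, so after dividing by $s$ and setting $r = t/s > 0$ it suffices to prove that
\[
\phi(r) := (r^{1/\beta}+1)^\beta - r - (2^\beta-1) \ge 0 \iff r\le 1.
\]
The equality case $r=1$ is immediate since $\phi(1) = 2^\beta - 1 - (2^\beta-1) = 0$. So the task becomes: show $\phi(r) > 0$ for $0 < r < 1$ and $\phi(r) < 0$ for $r > 1$.

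First I would compute $\phi'(r) = (r^{1/\beta}+1)^{\beta-1} r^{1/\beta - 1} - 1$. At $r=1$ this is $2^{\beta-1}\cdot 1 - 1 = 2^{\beta-1}-1 < 0$ since $\beta < 1$. The key structural claim is that $\phi$ is \emph{strictly decreasing on all of $(0,\infty)$}, which together with $\phi(1)=0$ gives both halves of the "if and only if" at once. To see $\phi' < 0$ everywhere, I would rewrite the condition $\phi'(r) < 0$ as $(r^{1/\beta}+1)^{\beta-1} r^{1/\beta-1} < 1$, i.e. (raising to the power $1/(1-\beta) > 0$ and rearranging) as
\[
r^{(1-1/\beta)/(1-\beta)} < (r^{1/\beta}+1)^{-1},
\]
equivalently $r^{1/\beta} + 1 < r^{(1/\beta - 1)/(1-\beta)}$. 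Since $1/\beta \ge 1$, one checks $(1/\beta - 1)/(1-\beta) = (1-\beta)/(\beta(1-\beta)) = 1/\beta$, so the right-hand exponent equals $1/\beta$ and the inequality reduces to the trivially true $r^{1/\beta} + 1 < r^{1/\beta} \cdot r^{0}$... wait—this needs care: I should recompute $(1/\beta-1)/(1-\beta)$. We have $1/\beta - 1 = (1-\beta)/\beta$, so $(1/\beta-1)/(1-\beta) = 1/\beta$ only if we divide by $(1-\beta)$, giving $1/\beta$. Hmm, that gives $r^{1/\beta}+1 < r^{1/\beta}$, which is false. So the correct reduction must come out the other way, confirming $\phi'(r) < 0$; I will redo this algebra carefully in the writeup. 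Alternatively, and more robustly, I would argue via the elementary fact that for $p = 1/\beta \ge 1$ the map $a \mapsto (a+c)^{1/p}$ is concave, hence $(r^{1/\beta}+1)^{\beta} = ((r^{1/\beta}+1))^{1/p}$ is a concave function of $u = r^{1/\beta}$; comparing its graph with the secant line through $u=0$ and $u=1$ (whose values are $1$ and $2^\beta$) yields $(u+1)^\beta \ge (2^\beta - 1) u + 1$ for $u \in [0,1]$ and the reverse for $u \ge 1$, which is exactly $\phi(r) \ge 0 \iff r \le 1$ upon substituting $u = r^{1/\beta}$ (noting $u\le 1 \iff r \le 1$).

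I expect the main obstacle to be purely bookkeeping: getting the direction of the concavity/secant comparison correct and making sure the strictness is handled (the inequality is strict for $r \ne 1$ because $(u+1)^\beta$ is \emph{strictly} concave in $u$ for $\beta < 1$, so it lies strictly above its chords on the open interval and strictly below their extensions outside). No computer assistance is needed here. I would present the proof in the concavity form: define $g(u) = (u+1)^\beta$ on $[0,\infty)$, note $g$ is strictly concave since $g''(u) = \beta(\beta-1)(u+1)^{\beta-2} < 0$, observe $g(0) = 1$ and $g(1) = 2^\beta$, so the chord from $(0,1)$ to $(1,2^\beta)$ is $\ell(u) = 1 + (2^\beta-1)u$; strict concavity gives $g(u) > \ell(u)$ for $u \in (0,1)$ and $g(u) < \ell(u)$ for $u > 1$, with equality only at $u \in \{0,1\}$; finally substitute $u = (t/s)^{1/\beta}$ and multiply through by $s$.
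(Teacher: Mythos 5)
Your first approach, showing $\phi'(r)<0$ directly, is sound and is essentially the paper's argument (the paper raises the inequality to the power $1/\beta$ before differentiating, but the calculation is the same in spirit); done carefully it reduces to the trivially true $r^{1/\beta}/(r^{1/\beta}+1)<1$. However, the concavity argument that you ultimately commit to has a genuine substitution error, and as written it does not prove the lemma.

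Concretely: with $u=r^{1/\beta}$ (so $r=u^\beta$) one has
\[
\phi(r)=(r^{1/\beta}+1)^\beta - r - (2^\beta-1) = (u+1)^\beta - u^\beta - (2^\beta-1),
\]
whereas the chord comparison for $g(u)=(u+1)^\beta$ against $\ell(u)=1+(2^\beta-1)u$ gives the different quantity $(u+1)^\beta - 1 - (2^\beta-1)u$. These are not the same (for instance at $u=0$ the first is $2-2^\beta>0$ while the second is $0$). Unwinding your substitution, what the chord inequality actually yields after multiplying by $s$ is
\[
(t^{1/\beta}+s^{1/\beta})^\beta \;\ge\; s + (2^\beta-1)\,s^{1-1/\beta}t^{1/\beta},
\]
not the lemma's $t+(2^\beta-1)s$; the two right-hand sides agree only at $t=s$. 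So the concavity route as stated leaves a gap. The cleanest repair keeps the substitution $u=r^{1/\beta}$ but replaces the chord comparison by a monotonicity statement: set $h(u)=(u+1)^\beta-u^\beta$, note $h(1)=2^\beta-1$, and observe
\[
h'(u)=\beta\bigl[(u+1)^{\beta-1}-u^{\beta-1}\bigr]<0
\]
since $\beta-1<0$ makes $x\mapsto x^{\beta-1}$ strictly decreasing; hence $h(u)\ge h(1)$ iff $u\le 1$, which is exactly $\phi(r)\ge 0$ iff $r\le 1$. (Equivalently, one can phrase this as concavity of $x\mapsto x^\beta$, so that the unit-length secant slope $(u+1)^\beta-u^\beta$ decreases in $u$ — that is the concavity fact you want, applied to $x^\beta$ rather than to $(u+1)^\beta$.) This monotonicity-of-a-difference step is exactly what the paper does after raising to the power $1/\beta$.
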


\begin{proof}
Dividing by $s>0$, the inequality can be equivalently written as
\[ ( ( ts^{-1})^{\tfrac{1}{\beta}}+1 )^\beta \ge ts^{-1} + 2^\beta-1. \]
Writing $a=ts^{-1}$, raising the inequality to the power $\frac{1}{\beta}$, and subtracting the right-hand side, it suffices to show that
\[ a^{\frac{1}{\beta}}+1 - (a + 2^\beta-1)^{\frac{1}{\beta}} \ge 0 \]
holds
if and only if $0\leq a\leq 1$.
When $a=1$, the left-hand side vanishes. Thus, it suffices to show that the left-hand side is strictly decreasing in $a>0$.
Differentiating the left-hand side in $a$, it suffices to show
\[a^{\frac{1}{\beta}-1} - (a + 2^\beta-1)^{\frac{1}{\beta}-1} < 0\]
for each $a>0$, which holds because $\tfrac{1}{\beta}-1> 0$.
\end{proof}
Recalling the definition of $G_\beta$ in \eqref{eqn:Gdef}, Lemma \ref{lem:GLvsGR} implies
\[ G_\beta[B](x,y) = \left\{\begin{array}{ll}
G^1_\beta[B](x,y), & \text{if}\;y-x\le B(y),\\
G^2_\beta[B](x,y), & \text{if}\;y-x\ge B(y).
\end{array}\right. \]
In particular, $G_\beta[B](x,y)=G^1_\beta[B](x,y)$ for $(x,y)$ near the diagonal $\{x=y\}$.
On the diagonal we have $G^1_\beta[B](x,x)=0$, so near the diagonal it is natural to consider the variable $h=y-x$.
The following lower bound will be
sufficient to bound all near diagonal cases.
\begin{lem}[Near diagonal lower bound]\label{lem:neardiaglowerbd}
Let $\beta\in (0,1)$. For $h\ge 0$,
\[ G^1_\beta[B](x,x+h) = \beta B(x+h)^{1-\frac1\beta} h^{\frac1\beta} + [B(x)+B(x+h)-2B(x+\tfrac12h)] - E_1, \]
where $E_1$ satisfies
\[ 0\le E_1 \le \tfrac12 \beta (1-\beta) B(x+h)^{1-\frac2\beta} h^{\frac2\beta} = O(h^{\frac2\beta}). \]
\end{lem}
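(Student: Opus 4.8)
The plan is to Taylor-expand the only non-elementary term of $G^1_\beta[B](x,x+h)$, namely $(h^{1/\beta}+B(x+h)^{1/\beta})^\beta$, in its first slot about the base point $h^{1/\beta}=0$. First I would substitute $y=x+h$ into the definition \eqref{eqn:Gdef} to write
\[ G^1_\beta[B](x,x+h) = \bigl(h^{1/\beta}+B(x+h)^{1/\beta}\bigr)^\beta + B(x) - 2B(x+\tfrac12 h). \]
I will assume $h>0$ and $B(x+h)>0$; the case $h=0$ reduces both sides to $0$, and the degenerate case $B(x+h)=0$ does not occur in our applications (where $x+h$ lies in the interior). Abbreviate $a=B(x+h)^{1/\beta}>0$ and consider $g(s)=(s+a)^\beta$ for $s\ge 0$, so that $g'(s)=\beta(s+a)^{\beta-1}$ and $g''(s)=\beta(\beta-1)(s+a)^{\beta-2}$.

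By Taylor's theorem with Lagrange remainder, for each $s>0$ there is $\xi=\xi(s)\in[a,a+s]$ with $g(s)=a^\beta+\beta a^{\beta-1}s+\tfrac{\beta(\beta-1)}{2}\xi^{\beta-2}s^2$. Evaluating at $s=h^{1/\beta}$ and using $a^\beta=B(x+h)$ and $\beta a^{\beta-1}h^{1/\beta}=\beta B(x+h)^{1-1/\beta}h^{1/\beta}$ yields
\[ \bigl(h^{1/\beta}+B(x+h)^{1/\beta}\bigr)^\beta = B(x+h) + \beta B(x+h)^{1-\frac1\beta}h^{\frac1\beta} + \tfrac{\beta(\beta-1)}{2}\xi^{\beta-2}h^{\frac2\beta}. \]
Defining $E_1:=-\tfrac{\beta(\beta-1)}{2}\xi^{\beta-2}h^{2/\beta}$ and substituting back into the expression for $G^1_\beta[B](x,x+h)$ produces precisely the claimed identity after collecting the term $B(x+h)$ with $B(x)-2B(x+\tfrac12 h)$.

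It remains to bound $E_1$. Since $\beta\in(0,1)$ we have $\beta-1<0$, hence $E_1=\tfrac{\beta(1-\beta)}{2}\xi^{\beta-2}h^{2/\beta}\ge 0$. For the upper bound, $\beta-2<0$ makes $\xi\mapsto\xi^{\beta-2}$ decreasing, and $\xi\ge a=B(x+h)^{1/\beta}$ gives $\xi^{\beta-2}\le B(x+h)^{(\beta-2)/\beta}=B(x+h)^{1-2/\beta}$, so $E_1\le\tfrac12\beta(1-\beta)B(x+h)^{1-\frac2\beta}h^{\frac2\beta}$; as $2/\beta>0$ this is $O(h^{2/\beta})$ as $h\to 0^+$ with $x$ fixed in the interior. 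There is no genuine obstacle here — the argument is a single-term Taylor expansion — and the only points needing care are the two sign facts ($\beta-1<0$ for nonnegativity of $E_1$ and $\beta-2<0$ for the monotonicity used in the upper bound), together with the bookkeeping that matches the remainder to the stated exponent $h^{2/\beta}$.
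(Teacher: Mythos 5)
Your proof is correct and uses essentially the same idea as the paper: a second-order Taylor expansion with Lagrange remainder of the map $s\mapsto(s+a)^\beta$. The paper's version factors out $B(x+h)$ and expands $(1+a)^\beta$ about $a=0$ rather than expanding $(s+a)^\beta$ about $s=0$, but this is only a cosmetic difference and the resulting remainder bound is identical.
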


\begin{rem}
In our applications, $B$ is always concave, so
\[B(x)+B(x+h)-2B(x+h/2)\le 0.\]
\end{rem}

\begin{proof}
It follows from Taylor's theorem that for $a\ge 0$,
\[ (1+a)^\beta = 1 + \beta a - \tfrac12 \beta(1-\beta) \widetilde{a}^2 \]
for some $\widetilde{a}\in [0,a]$.
Letting $a=(h B(x+h)^{-1})^{\frac1\beta}$,
\[ (h^{\frac1\beta} + B(x+h)^{\frac1\beta})^\beta = B(x+h) (1+a)^\beta = B(x+h) + \beta B(x+h)^{1-\frac1\beta} h^{\frac1\beta} - E_1, \]
with
\[ E_1 = \tfrac12 B(x+h) \beta(1-\beta) \widetilde{a}^2\ge 0. \]
$E_1$ satisfies the claimed bound, because
$\widetilde{a}^2 \le a^2=h^{\frac2\beta} B(x+h)^{-\frac2\beta}$.
\end{proof}

\subsection{\texorpdfstring{Properties of $L_\beta$ and $Q_\beta$}{Properties of L and Q}}\label{sec:LandQprop}
Let us record the first few derivatives of $L_\beta(x)=x (\log_2(1/x))^\beta$:
\begin{equation}\label{eqn:DL}
L'_\beta(x) = (\log 2)^{-\beta} (\log \tfrac1x)^{-1+\beta} (-\beta +\log \tfrac1x),
\end{equation}
\begin{equation}\label{eqn:DDL}
L''_\beta(x) = -\beta (\log 2)^{-\beta} x^{-1} (\log \tfrac1x)^{-2+\beta}(1-\beta+\log(x^{-1})),
\end{equation}
\begin{equation}\label{eqn:DDDL}
L'''_\beta(x) = \beta (\log 2)^{-\beta} x^{-2} (\log\tfrac1x)^{-3+\beta} (\beta(3-\beta)-2 + (\log\tfrac1x)^2).
\end{equation}
These will be used to verify the following properties of $L_\beta$.
\begin{lem}\label{lem:Lprop}
Let $\beta\in [\frac12,1]$.
Then
\begin{enumerate}
\item the function $x\mapsto L_\beta(x)$ is strictly concave on $[0,1]$.
\item if $x\in (0,e^{-1})$, then $L''_\beta(x)$ is decreasing in $\beta$.
\item if $x\in (0,e^{-\sqrt{3}/2})$, then $L'''_\beta(x)$ is positive and increasing in $\beta$.
\item if $x\in (0,\tfrac{1}{2}]$, $L^{(4)}_\beta(x)$ is negative.
\end{enumerate}
\end{lem}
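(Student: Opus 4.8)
\textbf{Proof proposal for Lemma \ref{lem:Lprop}.}

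The plan is to prove all four statements directly from the explicit derivative formulas \eqref{eqn:DL}--\eqref{eqn:DDDL} together with one further differentiation, treating $\log\tfrac1x$ as the natural variable. Throughout write $t=\log(1/x)$, so that $x\in(0,1)$ corresponds to $t>0$, $x\in(0,e^{-1})$ to $t>1$, $x\in(0,e^{-\sqrt3/2})$ to $t>\sqrt3/2$, and $x\in(0,\tfrac12]$ to $t\ge\log 2$. All the factors $(\log 2)^{-\beta}$, $x^{-1}$, $x^{-2}$, $t^{-2+\beta}$, $t^{-3+\beta}$ appearing below are positive, so in each case the sign and monotonicity-in-$\beta$ questions reduce to the behavior of a single elementary factor.

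First I would prove (1): from \eqref{eqn:DDL}, $L_\beta''(x)=-\beta(\log2)^{-\beta}x^{-1}t^{-2+\beta}(1-\beta+t)$, and since $\beta\in[\tfrac12,1]$ we have $1-\beta\ge 0$, hence $1-\beta+t>0$ for all $t>0$; thus $L_\beta''(x)<0$ on $(0,1)$, giving strict concavity. For (2), from the same formula the $\beta$-dependent part of $L_\beta''(x)$ is $-\beta(\log 2)^{-\beta}t^{-2+\beta}(1-\beta+t)=-g(\beta)$ where, taking a logarithmic derivative of $g(\beta)=\beta(\log 2)^{-\beta}t^{\beta}(1-\beta+t)$ (absorbing the $t^{-2}$ into the fixed positive prefactor), one gets $\tfrac{g'(\beta)}{g(\beta)}=\tfrac1\beta-\log(\log 2)+\log t-\tfrac{1}{1-\beta+t}$; when $t>1$ we have $\log t>0$, $-\log(\log2)>0$, $\tfrac1\beta\ge 1$, and $\tfrac1{1-\beta+t}<1$, so $g'(\beta)>0$, i.e.\ $g$ is increasing, so $L_\beta''=-g$ is decreasing in $\beta$. (If one wants to be careful about the crude bound $\tfrac1\beta-\tfrac1{1-\beta+t}>0$, note $1-\beta+t\ge t>1\ge\beta$.) For (3), \eqref{eqn:DDDL} gives $L_\beta'''(x)=\beta(\log2)^{-\beta}x^{-2}t^{-3+\beta}(\beta(3-\beta)-2+t^2)$; the quadratic-in-$\beta$ factor $\beta(3-\beta)-2$ is minimized on $[\tfrac12,1]$ at the endpoints with value $-\tfrac14$, so $\beta(3-\beta)-2+t^2\ge t^2-\tfrac14>0$ whenever $t>\tfrac12$, and in particular when $t>\sqrt3/2$; this proves positivity. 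For monotonicity in $\beta$ I would again take a logarithmic derivative of $L_\beta'''$ with respect to $\beta$: writing $L_\beta'''=x^{-2}\,p(\beta)$ with $p(\beta)=\beta(\log2)^{-\beta}t^{\beta}(\beta(3-\beta)-2+t^2)$, we get $\tfrac{p'}{p}=\tfrac1\beta-\log(\log 2)+\log t+\tfrac{3-2\beta}{\beta(3-\beta)-2+t^2}$, and for $t>\sqrt3/2$ each term is nonnegative ($\log t>\log(\sqrt3/2)$, and actually $-\log(\log2)+\log t>0$ already for $t\ge\tfrac12$ since $\log2<\tfrac12$... more safely: $-\log\log 2\approx 0.3665>0$ and $\log t$ may be slightly negative but $\tfrac1\beta\ge1$ dominates), so $p$ is increasing and hence so is $L_\beta'''$.

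For (4) I would compute $L_\beta^{(4)}$ by differentiating \eqref{eqn:DDDL} once more; since $L_\beta'''(x)=\beta(\log2)^{-\beta}x^{-2}t^{-3+\beta}R(t)$ with $R(t)=t^2+\beta(3-\beta)-2$ and $\tfrac{d}{dx}t=-\tfrac1x$, the product rule yields $L_\beta^{(4)}(x)=\beta(\log2)^{-\beta}x^{-3}t^{-4+\beta}\big[\,2t^2+(3-\beta)R(t)-2t^2\,\big]$ — more precisely one collects the terms coming from $x^{-2}\mapsto -2x^{-3}$, from $t^{-3+\beta}\mapsto (3-\beta)x^{-1}t^{-4+\beta}$, and from $R(t)\mapsto -\tfrac2x t\,t^{-3+\beta}=-2x^{-1}t^{-3+\beta}$, leading to a factor of the form $-\big(2t^2+(3-\beta)(t^2+\beta(3-\beta)-2)+2t^2\big)$ up to sign bookkeeping; the point is that the bracket is a positive quadratic in $t$ plus $(3-\beta)(\beta(3-\beta)-2)$, and on $t\ge\log2$ with $\beta\in[\tfrac12,1]$ this is strictly positive (for $t\ge\log2\approx0.693$ the $t^2$-terms already exceed $(3-\beta)\cdot\tfrac14\le\tfrac{11}{16}$, since $4t^2\gtrsim 1.9$), so $L_\beta^{(4)}(x)$ carries the sign of the remaining prefactor and is negative on $(0,\tfrac12]$. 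I expect the main obstacle to be purely bookkeeping: getting the coefficients in the $L_\beta^{(4)}$ computation exactly right and then checking the resulting quadratic-in-$t$ expression is positive for $t\ge\log 2$ and all $\beta\in[\tfrac12,1]$ — this is elementary (estimate each term crudely, or minimize the quadratic), but it is the one place where a careful case/endpoint check is needed. If desired, the positivity of these one-variable polynomial expressions in $(t,\beta)$ over the relevant compact box can also be discharged by the $\textsc{Partition}_2$ procedure of \S\ref{sec:auto}, though here a direct hand estimate suffices.
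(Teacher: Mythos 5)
Your treatment of parts (1)--(3) is essentially correct, and for (2) and (3) your logarithmic-derivative estimate is actually cleaner than the paper's argument: the paper multiplies $\partial_\beta\log(-L''_\beta)$ through by $\beta(1-\beta+t)$ to get a quadratic in $\beta$ and then invokes concavity and endpoint checks, whereas your direct bound $\tfrac1\beta-\log\log 2+\log t-\tfrac1{1-\beta+t}>0$ (for $t>1$) and the analogous computation for (3) need no such detour. One slip in (3): $\beta(3-\beta)-2$ is a concave parabola, so on $[\tfrac12,1]$ its minimum is at $\beta=\tfrac12$ and equals $-\tfrac34$, not $-\tfrac14$. That is exactly why the threshold is $t>\sqrt3/2$ (so that $t^2>\tfrac34$); your final conclusion is right but the intermediate bound $t^2-\tfrac14$ is wrong.

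Part (4), however, has a genuine gap. Differentiating $L'''_\beta=\beta(\log2)^{-\beta}x^{-2}t^{-3+\beta}R(t)$ with $R(t)=t^2+\beta(3-\beta)-2$ and $\tfrac{dt}{dx}=-\tfrac1x$ gives
\[
L^{(4)}_\beta(x)=\beta(\log2)^{-\beta}x^{-3}t^{-4+\beta}\bigl[(3-\beta-2t)R(t)-2t^2\bigr],
\]
and the bracket expands to $(1-\beta-2t)t^2-(3-\beta)(2-\beta)(1-\beta)+2(2-\beta)(1-\beta)t$, which is the paper's $f_\beta(x)$. Your proposed bracket $-\bigl(2t^2+(3-\beta)R+2t^2\bigr)$ has dropped the $-2tR$ contribution entirely, which is where the crucial $-2t^3$ cubic term comes from. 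As a result your sign analysis goes wrong in two ways: the true bracket $f_\beta$ is \emph{negative} on $t\ge\log2$, $\beta\in[\tfrac12,1]$ (for instance $f_1(\tfrac12)\approx -0.67$), not positive, and it is a cubic in $t$ with negative leading coefficient, so the "the $t^2$-terms dominate a bounded constant" heuristic does not apply. The suggested fallback of discharging the claim with $\textsc{Partition}_2$ also fails as stated, since the region $x\in(0,\tfrac12]$ corresponds to the unbounded set $t\in[\log2,\infty)$ and the procedure requires a compact rectangle. What is actually needed is a monotonicity argument: the paper shows $f_\beta(\tfrac12)<0$ (via $\partial_\beta f_\beta(\tfrac12)>0$ and $f_1(\tfrac12)<-0.6$) and then that $f_\beta'(x)\ge0$ on $(0,\tfrac12]$, the latter reducing to the elementary inequality $12u^2-2u-3\ge0$ for $u\ge\log2$. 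Your proof of (4) would need to be replaced by an argument of that type.
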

\begin{rem}
Note $e^{-\sqrt{3}/2}>e^{-1}>\frac14$. \end{rem}
\begin{proof}
(1) This follows from \eqref{eqn:DDL}.

(2) We show that
\[ \partial_\beta \log(-L''_\beta(x)) > 0 \]
which means that $\beta\mapsto \log(-L''_\beta(x))$ is increasing in $\beta$, so $L''_\beta(x)$ is decreasing in $\beta$.
The left hand side equals
\[ \beta^{-1}(1-\beta+\log\tfrac1x)^{-1} \ell(x,\beta), \]
where
\[ \ell(x,\beta)= -a_2(x)\beta^2 + a_1(x)\beta + a_0(x) \]
and
\begin{align*}
a_0(x) = & 1+\log\tfrac1x>0,\\
a_1(x) = & -2 +
\log\tfrac1{\log 2} + (\log\tfrac1x) (\log\tfrac1{\log 2}) + \log\log\tfrac1x\\
&
+
(\log\tfrac1x) (\log\log\tfrac1x) > a_1(e^{-1}) ,\\
a_2(x) =& \log\tfrac1{\log 2}+\log\log\tfrac1x>0.
\end{align*}
Notice that $\partial_\beta^2 \ell(x,\beta)=-2a_2(x)<0$, so $\beta\mapsto \ell(x,\beta)$ is concave and thus
\[ \ell(x,\beta)\ge \min(\ell(x,\tfrac12),\ell(x,1)). \]
Observe from definition that $x\mapsto \ell(x,\tfrac12)$ and $x\mapsto \ell(x,1)$ are decreasing
functions, so
\[ \ell(x,\beta)\ge \min(\ell(e^{-1},\tfrac12),\ell(e^{-1},1)). \]
Now it only remains to evaluate
\[ \ell(e^{-1},\tfrac12) = 1+\tfrac34 \log \tfrac1{\log 2} > 0,\]
\[\ell(e^{-1},1)=\log\tfrac{1}{\log 2} > 0. \]
(3) By \eqref{eqn:DDDL} it suffices to show
\[ \beta(3-\beta) - 2 + (\log\tfrac1x)^2 > 0. \]
Observe that the function on the left hand side is positive for small enough $x$ and strictly decreases in $x$.
Also, the function $\beta\mapsto \beta(3-\beta)$ is increasing since its derivative is $3-2\beta\ge 1$.
Solving
\[ \beta(3-\beta) -2 + (\log\tfrac1x)^2 = 0 \]
with $\beta=\frac12$ for $x$ gives $x=e^{-\sqrt{3}/2}$.
(This also shows that $L'''_\beta(x)$ is increasing in $\beta$.)\\

(4) From \eqref{eqn:DDDL} we compute
\[ L_\beta^{(4)}(x) = x^{-3}\beta (\log 2)^{-\beta } (\log \tfrac{1}{x})^{\beta -4} f_\beta(x), \]
where
\[ f_\beta(x) = -(3-\beta) (2-\beta) (1-\beta) + (1-\beta -2 \log \tfrac{1}{x})(\log \tfrac{1}{x})^2 +2 (2-\beta) (1-\beta)\log \tfrac{1}{x}, \]
so it suffices to show that
$f_\beta(x)$ is negative.

To see this, we note that
\[\partial_\beta f_\beta(\tfrac{1}{2}) = 3 \beta^2+4 (\log 2 -3)\beta +11-(\log 2)^2-6 \log 2 \]
which has both roots strictly greater than $1$ and is thus positive for $\beta\in [\tfrac{1}{2},1]$, so $\beta\mapsto f_\beta(\frac12)$ is increasing.
Evaluating
\begin{equation*}
f_{1} (\tfrac12) <-0.6,
\end{equation*}
this implies $f_\beta(\tfrac{1}{2})<0$ for all $\beta\in [\tfrac{1}{2},1]$.
Thus it suffices to show that
\[f_\beta'(x)=2x^{-1} \left(3 (\log \tfrac{1}{x})^2 -(1-\beta) \log \tfrac{1}{x} -\beta ^2+3 \beta -2 \right)\]
is non-negative on $(0,\tfrac{1}{2}]$. For this it suffices to show
\[3 (\log \tfrac{1}{x})^2 -(1-\beta) \log \tfrac{1}{x} - \beta ^2+3 \beta -2\geq 0\]
on $(0,\tfrac{1}{2}]$.
The left-hand side is increasing in $\beta$, so it suffices to verify the inequality for $\beta=\tfrac{1}{2}$. Setting also $u=\log(\tfrac{1}{x})$, it suffices to show
\[12 u^2 -2 u - 3\geq 0 \]
for $u\in [\log2,\infty)$. This can be verified by solving for $u$ and noting that the solutions are not greater than $\log 2$.
\end{proof}

We turn our attention to the cubic polynomials $Q_\beta$ which are defined by
\[ Q_\beta(x) = \tfrac23 x (1-x)(2^{\beta+2}-3+4(3-2^{\beta+1}) x) \]
so that $Q_\beta(0)=Q_\beta(1)=0, Q_\beta(\frac12)=\frac12$ and $Q_\beta(\frac14)=2^{\beta-2}$.
For $\beta~\in~[\tfrac{1}{2},1]$ we denote
\begin{equation}\label{eqn:alpha01def}
\alpha_0 = 2^{2 + \beta} - 5 \ge 2^{2.5}-5>0,
\end{equation}
\[ \alpha_1 = 3-2^{1 + \beta}. \]
Note that $\alpha_1>0$ if and only if $\beta<\log_2(\frac32)$.
We record some derivatives of $Q_\beta$ for future use:
\begin{equation}\label{eqn:DQ}
Q_\beta'(x) =\tfrac{2}{3}(2^{2+\beta}-3) - 4\alpha_0x-8\alpha_1 x^2,
\end{equation}
\begin{equation}\label{eqn:DDQ}
Q''_\beta(x)= -4\alpha_0 - 16\alpha_1 x,
\end{equation}
\begin{equation}\label{eqn:DDDQ}
Q'''_\beta(x)=-16\alpha_1,
\end{equation}
\begin{equation}\label{eqn:Qbetaderiv}
\partial_\beta Q_\beta(x) = \tfrac13 \log(2) 2^{3 + \beta} x (1 - x) (1 - 2 x).
\end{equation}

\begin{lem}\label{lem:Qbprop}
Let $x\in [0,\tfrac12]$ and $\beta\in[\frac12,\log_2(\frac32))$. Then:
\begin{enumerate}
\item $Q_\beta(x)\ge 0$ and $Q_\beta$ is increasing in $\beta$.
\item $Q'_\beta(x)> 0$ and if $x\in[\frac14,\frac12]$, then $Q'_\beta$ is decreasing in $\beta$
\item $Q''_\beta(x)<0$, $Q'''_\beta(x)<0$ and $Q''_\beta(x)$ is decreasing in $\beta$
\end{enumerate}
\end{lem}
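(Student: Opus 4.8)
The plan is to prove each of the three claims by using the explicit formulas \eqref{eqn:DQ}--\eqref{eqn:Qbetaderiv} for the derivatives of $Q_\beta$, together with the sign of $\alpha_0$ and $\alpha_1$ recorded in \eqref{eqn:alpha01def}. The key observation throughout is that $\alpha_0>0$ always, and in the range $\beta\in[\tfrac12,\log_2(\tfrac32))$ we also have $\alpha_1>0$. Since $Q_\beta$ is cubic, $Q_\beta'$ is quadratic, $Q_\beta''$ is linear, and $Q_\beta'''$ is constant, so all the required sign assertions reduce to elementary one-variable facts about affine and quadratic functions on $[0,\tfrac12]$, and the monotonicity-in-$\beta$ assertions reduce to checking signs of the $\partial_\beta$-derivatives.

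For (3), the easiest: $Q_\beta'''(x)=-16\alpha_1<0$ since $\alpha_1>0$, and $Q_\beta''(x)=-4\alpha_0-16\alpha_1 x$ is negative on $[0,\tfrac12]$ because both $\alpha_0>0$ and $\alpha_1 x\ge 0$. For monotonicity of $Q_\beta''$ in $\beta$, I would differentiate $Q_\beta''(x)=-4\alpha_0-16\alpha_1 x$ in $\beta$: since $\partial_\beta\alpha_0=\partial_\beta(2^{2+\beta})=\log 2\cdot 2^{2+\beta}>0$ and $\partial_\beta\alpha_1=-\log 2\cdot 2^{1+\beta}<0$, we get $\partial_\beta Q_\beta''(x)=-4\log 2\cdot 2^{2+\beta}+16\log 2\cdot 2^{1+\beta}x=-4\log2\cdot 2^{1+\beta}(2-4x)\le 0$ on $[0,\tfrac12]$, so $Q_\beta''$ is decreasing in $\beta$. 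For (1), non-negativity of $Q_\beta$ on $[0,\tfrac12]$ follows since $Q_\beta$ is concave there (by (3)) and $Q_\beta(0)=0$, $Q_\beta(\tfrac12)=\tfrac12>0$; alternatively directly from the product formula $Q_\beta(x)=\tfrac23 x(1-x)(\alpha_0+2+4\alpha_1 x)$ (rewriting $2^{2+\beta}-3=\alpha_0+2$), noting $x(1-x)\ge 0$ and $\alpha_0+2+4\alpha_1 x>0$ on $[0,\tfrac12]$. Monotonicity in $\beta$ is immediate from \eqref{eqn:Qbetaderiv}: $\partial_\beta Q_\beta(x)=\tfrac13\log 2\cdot 2^{3+\beta}x(1-x)(1-2x)\ge 0$ on $[0,\tfrac12]$.

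For (2), positivity of $Q_\beta'$: from \eqref{eqn:DQ}, $Q_\beta'(x)=\tfrac23(2^{2+\beta}-3)-4\alpha_0 x-8\alpha_1 x^2$. Since $\alpha_1>0$ this quadratic is concave down in $x$, so on $[0,\tfrac12]$ its minimum is at an endpoint; $Q_\beta'(0)=\tfrac23(2^{2+\beta}-3)>0$ (as $\beta\ge\tfrac12$ gives $2^{2+\beta}\ge 2^{2.5}>3$), and $Q_\beta'(\tfrac12)=\tfrac23(2^{2+\beta}-3)-2\alpha_0-2\alpha_1=\tfrac23(2^{2+\beta}-3)-2(2^{2+\beta}-5)-2(3-2^{1+\beta})$, which I would simplify and check is positive (expected to be $Q_\beta'(\tfrac12)=Q_\beta'(\tfrac12)$; by symmetry of the interpolation data one anticipates $Q_\beta'(\tfrac12)=\tfrac13(2^{2+\beta}-3)\cdot$ something, and in any case it is a single explicit number bounded below by its value at $\beta=\tfrac12$ or $\beta=\log_2(\tfrac32)$). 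For the monotonicity of $Q_\beta'$ in $\beta$ on $[\tfrac14,\tfrac12]$, differentiate \eqref{eqn:DQ}: $\partial_\beta Q_\beta'(x)=\tfrac23\log2\cdot 2^{2+\beta}-4\log2\cdot 2^{2+\beta}x+8\log2\cdot 2^{1+\beta}x^2=\log2\cdot 2^{1+\beta}(\tfrac43-8x+8x^2)=8\log2\cdot 2^{1+\beta}(x-\tfrac12)(x-\tfrac13)$; on $[\tfrac14,\tfrac12]$ one has $x-\tfrac12\le 0$ while $x-\tfrac13$ changes sign at $x=\tfrac13$, so this needs a little more care --- I would instead write the factor as $\tfrac{1}{6}(12x^2-12x+2)\cdot\ldots$ and verify directly that $\tfrac43-8x+8x^2\le 0$ on $[\tfrac14,\tfrac12]$ by checking its values at the endpoints ($\tfrac43-2+\tfrac12=-\tfrac16<0$ at $x=\tfrac14$ and $\tfrac43-4+2=-\tfrac23<0$ at $x=\tfrac12$) and convexity. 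The main obstacle, if any, is purely bookkeeping: keeping the substitutions $2^{2+\beta}-3=\alpha_0+2$ and $2^{2+\beta}-3=2\alpha_1+2^{2+\beta}-\ldots$ straight and confirming the endpoint evaluations of $Q_\beta'$ are positive across the whole $\beta$-range; no genuine difficulty is expected and no computer assistance is needed.
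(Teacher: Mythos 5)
Your proposal is correct and follows essentially the same route as the paper: compute the explicit $x$- and $\beta$-derivatives from \eqref{eqn:DQ}--\eqref{eqn:Qbetaderiv} and read off signs from $\alpha_0>0$, $\alpha_1>0$. The only substantive differences are cosmetic: for positivity of $Q_\beta'$ you use concavity plus endpoint evaluation where the paper uses the product form $Q_\beta'(x)=\tfrac23(1-2x)(2^{\beta+2}-3+4\alpha_1 x)+\tfrac83\alpha_1 x(1-x)$, and for $\partial_\beta Q_\beta'$ you initially misfactor the quadratic $\tfrac43-8x+8x^2$ (its roots are $\tfrac12\pm\tfrac{\sqrt3}{6}$, not $\tfrac12$ and $\tfrac13$) but then correctly fall back to checking nonpositivity at $x=\tfrac14,\tfrac12$ together with convexity, which is sound since the vertex is at $x=\tfrac12$.
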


\begin{proof}
(1) This follows from \eqref{eqn:Qbetaderiv}.

(2) From \eqref{eqn:DQ} one can write
\[Q'_\beta(x)= \tfrac{2}{3}(1-2x)(2^{\beta+2}-3+4\alpha_1 x) + \tfrac{8}{3}\alpha_1 x(1-x)>0 \]
for $x\in [0,\frac12]$ and $\beta\in[\frac12,\log_2(\frac32)]$.
Also,
\[\partial_\beta Q'_\beta(x) = \tfrac{1}{3}(\log 2)2^{3+\beta}(6x^2-6x+1). \]
This is negative if $x\in [\tfrac{1}{4},\tfrac{1}{2}]$, because
\[ 6x^2-6x+1=(x-\tfrac{1}{2}-\tfrac{\sqrt{3}}{6}) (x-\tfrac{1}{2}+\tfrac{\sqrt{3}}{6}).\]

(3) The first part follows from \eqref{eqn:DDQ}, \eqref{eqn:DDDQ}. To see that $Q_\beta''(x)$ is decreasing in $\beta$, compute \[\partial_\beta Q''_\beta(x) = -\log(2)2^{4+\beta}(1-2x)\le 0, \]
since $x\leq \tfrac{1}{2}$.
\end{proof}

\begin{lem}\label{lem:LQmax}
For $\beta\in [\tfrac{1}{2},\log_2\frac32]$ we have
\begin{enumerate}
\item $L_\beta(x) \ge Q_\beta(x)$ for $x\in [0,\tfrac14],$
\item $L_\beta(x) \le Q_\beta(x)$ for $x\in [\tfrac 14,\tfrac12].$
\end{enumerate}
\end{lem}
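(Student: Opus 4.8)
The plan is to prove both inequalities by reducing them to sign conditions on auxiliary functions of one real variable and then invoking the computer-assisted positivity method of \S\ref{sec:auto}, after first isolating the cases that can be handled by hand. Both claims assert that $L_\beta - Q_\beta$ has a fixed sign on a subinterval; since $L_\beta(\tfrac14)=2^{\beta-2}=Q_\beta(\tfrac14)$ and $L_\beta(\tfrac12)=\tfrac12=Q_\beta(\tfrac12)$ by construction, and also $L_\beta(0)=0=Q_\beta(0)$, the function $D_\beta:=L_\beta-Q_\beta$ vanishes at $0,\tfrac14,\tfrac12$. So the content is to show $D_\beta\ge 0$ on $[0,\tfrac14]$ and $D_\beta\le 0$ on $[\tfrac14,\tfrac12]$, i.e. that $D_\beta$ has no sign changes in the interiors of these two intervals.

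\textbf{Reduction via convexity/concavity.} The natural approach is to control $D_\beta''=L_\beta''-Q_\beta''$. By Lemma \ref{lem:Lprop}(4), $L_\beta^{(4)}<0$ on $(0,\tfrac12]$, and $Q_\beta'''=-16\alpha_1$ is constant while $Q_\beta^{(4)}=0$; hence $D_\beta^{(4)}=L_\beta^{(4)}<0$, so $D_\beta'''$ is strictly decreasing on $(0,\tfrac12]$. From \eqref{eqn:DDDL} we have $L_\beta'''(x)\to+\infty$ as $x\to 0^+$ (the factor $x^{-2}$ dominates), so $D_\beta'''$ is positive near $0$ and, being strictly decreasing, changes sign at most once on $(0,\tfrac12]$. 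Therefore $D_\beta''$ is first increasing then (possibly) decreasing on $(0,\tfrac12]$, i.e. $D_\beta''$ has at most two zeros there, so $D_\beta'$ has at most three, and $D_\beta$ itself has at most four zeros in $[0,\tfrac12]$ counted with multiplicity. Since $D_\beta$ already vanishes at $0,\tfrac14,\tfrac12$ and also $D_\beta'(0^+)$ can be analyzed from \eqref{eqn:DL} (note $L_\beta'(x)\to+\infty$ as $x\to 0^+$ for $\beta\in[\tfrac12,1)$, while $Q_\beta'(0)$ is finite, so $D_\beta'(0^+)=+\infty$), a short bookkeeping argument with the known zeros should pin down the sign pattern: $D_\beta$ cannot have an additional interior zero in $(0,\tfrac14)$ or $(\tfrac14,\tfrac12)$ without forcing too many zeros overall. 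Combined with the boundary slope information this yields $D_\beta\ge 0$ on $[0,\tfrac14]$ and $D_\beta\le 0$ on $[\tfrac14,\tfrac12]$.

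\textbf{Handling $\beta$-dependence and the endpoint $\beta=\log_2(3/2)$.} One subtlety is the uniformity in $\beta\in[\tfrac12,\log_2\tfrac32]$. At $\beta=\log_2\tfrac32$ one has $\alpha_1=0$, so $Q_\beta$ degenerates to a quadratic; this case should be checked separately (or included as a limit) since several of the lemmas above are stated with $\alpha_1>0$. For the interior range $\beta\in[\tfrac12,\log_2\tfrac32)$, I would either (a) use the monotonicity facts—$Q_\beta$ increasing in $\beta$ (Lemma \ref{lem:Qbprop}(1)) and appropriate monotonicity of $L_\beta$—to reduce to the two extreme values of $\beta$, or (b) simply treat $(x,\beta)$ jointly as a point in the rectangle $[0,\tfrac14]\times[\tfrac12,\log_2\tfrac32]$ (resp. $[\tfrac14,\tfrac12]\times[\tfrac12,\log_2\tfrac32]$) and apply $\textsc{Partition}_2$ to the explicit function $D_\beta(x)/(x(\tfrac14-x))$ (resp. its negative divided by $x(\tfrac12-x)(x-\tfrac14)$ or a similar normalization that removes the known zeros so strict positivity holds on the closed rectangle). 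The normalization to remove known zeros is essential because the partitioning method requires \emph{strict} positivity; a tight lower bound for the normalized quotient can be assembled from the explicit formulas for $L_\beta$, its derivatives, and $Q_\beta$.

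\textbf{Main obstacle.} The main difficulty is the behavior near $x=0$ and near the shared zeros $x=\tfrac14,\tfrac12$: the logarithmic singularity of $L_\beta$ and its derivatives at $0$ makes a naive tight lower bound blow up, and the vanishing of $D_\beta$ at $\tfrac14$ and $\tfrac12$ means strict positivity fails there unless one divides out the zeros cleanly. So the crux is (i) finding the right normalization that cancels exactly the zeros at $0,\tfrac14,\tfrac12$ and yields a function that is provably bounded below by a positive constant on the relevant closed rectangle, and (ii) verifying the sign of $D_\beta'''$ near $0$ rigorously via \eqref{eqn:DDDL} so the "at most one sign change of $D_\beta'''$" argument goes through; once these are in place the remaining positivity is routine for $\textsc{Partition}_2$.
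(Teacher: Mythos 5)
There is a genuine gap in your zero-counting argument. You correctly establish that $D_\beta=L_\beta-Q_\beta$ has at most four zeros in $[0,\tfrac12]$ counted with multiplicity (via $D_\beta^{(4)}<0$, Rolle, and the boundary behavior at $0^+$), and you note $D_\beta>0$ just to the right of $0$. But this is not enough to pin down the sign pattern. Consider the ``bad'' configuration in which $D_\beta>0$ on $(0,a)$, $D_\beta<0$ on $(a,\tfrac14)$, and $D_\beta>0$ on $(\tfrac14,\tfrac12)$, with simple zeros at $0,a,\tfrac14,\tfrac12$. That is exactly four zeros; $D_\beta'$ has three (one in each interval between consecutive zeros, signs $+,-,+,-$); $D_\beta''$ has two; $D_\beta'''$ has one. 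Every one of your derivative constraints is satisfied, yet this configuration violates \emph{both} parts of the lemma. So the bookkeeping you defer to as ``short'' does not close; you need an additional piece of sign information, e.g. the sign of $D_\beta'(\tfrac14)$, to rule out a sign change at $\tfrac14$ going the wrong way.

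The paper's proof supplies precisely this. For part (1) it does not work globally on $[0,\tfrac12]$ at all: it shows $D_\beta'''>0$ on $[0,\tfrac14]$ (Lemma \ref{lem:Lprop}(3), since $\tfrac14<e^{-\sqrt3/2}$, together with $Q_\beta'''=-16\alpha_1\le 0$), so $D_\beta'$ is strictly convex there, and then computes $D_\beta'(\tfrac14)<0$ explicitly. A convex function going from $+\infty$ at $0^+$ to a negative value at $\tfrac14$ has a unique zero, hence $D_\beta$ rises then falls and, vanishing at both endpoints, is $\ge0$. For part (2) it uses $D_\beta^{(4)}<0$, so $D_\beta''$ is concave on $[\tfrac14,\tfrac12]$, and then evaluates $D_\beta''(\tfrac14)>0$ and $D_\beta''(\tfrac12)>0$ to conclude $D_\beta''\ge0$, i.e. $D_\beta$ is convex and hence $\le0$ between its two zeros. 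In both cases the decisive step is an explicit endpoint evaluation of a derivative, which your proposal omits. Your fallback of dividing out the known zeros and running $\textsc{Partition}_2$ is a legitimate alternative route, but you neither specify the normalization nor confront the fact that the resulting quotient is unbounded as $x\to0^+$ (since $L_\beta(x)/x\to\infty$), so as written it is not a complete proof either. Also note: in the paper this lemma is proved entirely by hand; no interval arithmetic is invoked.
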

\begin{rem}
The conclusions in the lemma continue to hold for all $\beta\in [\frac12,1]$.
\end{rem}
\begin{proof}
We make use of Lemma \ref{lem:Lprop} and \ref{lem:Qbprop}.

(1) We have $L_\beta'''-Q_\beta'''> 0$ on $[0,\frac14]$,
so $L_\beta'-Q_\beta'$ is strictly convex there. Also $Q_\beta'(0)>0$ and $L'_\beta(x)\to\infty$ as $x\to 0^+$. Moreover,
\[ L_\beta'(\tfrac{1}{4})-Q_\beta'(\tfrac{1}{4}) = 2^{\beta } ( \tfrac{4}{3}-\tfrac{\beta}{\log 4} )-\tfrac{3}{2} <0,\]
where the last inequality follows from evaluating
\[L_{\frac{1}{2}}'(\tfrac{1}{4})-Q_{\frac{1}{2}}'(\tfrac{1}{4})<-0.12\]
and the fact that $L_\beta'-Q_\beta'$ is decreasing in $\beta$, which in turn follows from
\[\partial_\beta(L_\beta'-Q_\beta')(\tfrac{1}{4}) = \tfrac{1}{\log 64}2^\beta (-3 - \beta \log 8 + 8(\log 2)^2)\]
and \[-3 - \beta \log 8 + 8(\log 2)^2 \leq -3 - \tfrac{1}{2} \log 8 + 8(\log 2)^2 <-0.06.\]
Therefore, $L_\beta'-Q_\beta'$ has a unique zero $\tilde{x}$ on $[0,\tfrac{1}{4}]$. Thus, $L_\beta-Q_\beta$ has a unique local extreme on this interval, which is a maximum since $L_\beta'-Q_\beta'$ is increasing on $[0,\tilde{x}]$ and decreasing on $[\tilde{x},\tfrac{1}{4}]$. Together with $L_\beta(0)-Q_\beta(0)=L_\beta(\tfrac{1}{4})-Q_\beta(\tfrac{1}{4})=0$ this shows $L_\beta-Q_\beta\geq 0$ on $[0,\tfrac{1}{4}]$. \\

(2)
$L_\beta^{(4)}\leq 0$. Thus, $L_\beta''-Q_\beta''$ is concave. We compute
\[L_\beta''(\tfrac{1}{2})-Q_\beta''(\tfrac{1}{2})=(\log 2)^{-2}(2 \beta (\beta- 1-\log 2)+2 (\log 2)^2).\]
This is positive which can be seen by computing $\partial_\beta(L_\beta''(\tfrac{1}{2})-Q_\beta''(\tfrac{1}{2}))=2(\log 2)^{-2}(2\beta-1-\log 2)<0$
and
\[L_{\log_2(3/2)}''(\tfrac{1}{2})-Q_{\log_2(3/2)}''(\tfrac{1}{2})>1.3.\]
Moreover,
\[L_\beta''(\tfrac{1}{4})-Q_\beta''(\tfrac{1}{4}) = 8 \left(2^{\beta }-1\right)+(\log 2)^{-2}(2^{\beta } \beta (\beta -1-\log 4)) \]
is increasing in $\beta$. Indeed, we have
\[\partial_\beta(L_\beta''(\tfrac{1}{4})-Q_\beta''(\tfrac{1}{4}) ) = \]
\[ = \tfrac{2^{\beta }}{{\log 2}} \left(\beta ^2 \log 2+2 \beta -\beta \log 2 (1+\log 4)-1+8 (\log 2)^2-\log 4\right)\]
and one can see that $\beta ^2 \log 2+2 \beta -\beta \log 2 (1+\log 4)$ is increasing in $\beta$, positive at $\beta=\tfrac{1}{2}$, and $-1+8 (\log 2)^2-\log 4>0$.
Moreover,
\begin{equation*}
\label{eqn:LQprime_22_auto}
L_{\frac{1}{2}}''(\tfrac{1}{4})-Q_{\frac{1}{2}}''(\tfrac{1}{4})>0.5.
\end{equation*}
Therefore,
$L_\beta''-Q_\beta''\geq 0$, so $L_\beta-Q_\beta$ is convex. Since $L_\beta(\tfrac{1}{4})-Q_\beta(\tfrac{1}{4}) = L_\beta(\tfrac{1}{2})-Q_\beta(\tfrac{1}{2})=0$, we have $L_\beta-Q_\beta\leq 0$ as desired.
\end{proof}

\subsection{Lower bounds for the Gaussian isoperimetric profile}
It is well-known that as $x\to 0^+$,
\[ I(x) \sim \sqrt{2}\cdot x \sqrt{\log(1/x)} \]
(Here $f(x)\sim g(x)$ means $\lim_{t\to 0^+} \frac{f(x)}{g(x)}=1$.)
We shall need a quantitative lower bound.

\begin{prop}\label{prop:Ilowerbd}
For all $0<x\le \frac15$:
\[I(x)\ge \sqrt{2}\cdot x\sqrt{\log(1/x)}\Big(1- \tfrac12 \tfrac{\log\log(1/x)}{\log(1/x)}-\tfrac{\log(2\pi^{1/2})}{\log(1/x)}\Big).\]
\end{prop}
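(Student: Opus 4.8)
The plan is to reduce the claimed lower bound on $I(x)=\varphi(\Phi^{-1}(x))$ to a standard Gaussian tail estimate for $\Phi^{-1}$. Write $t=\Phi^{-1}(x)$, so that $x\to 0^+$ corresponds to $t\to -\infty$; by symmetry it is cleaner to set $s=-t=\Phi^{-1}(1-x)>0$ and note $I(x)=\varphi(s)=(2\pi)^{-1/2}e^{-s^2/2}$. Taking logarithms, the desired inequality becomes a lower bound on $\log I(x)$, i.e. an \emph{upper} bound on $s^2$, of the form
\[
s^2 \le 2\log(1/x) + \log\log(1/x) + 2\log(2\pi^{1/2}) + (\text{error controlled by the quadratic correction}),
\]
after expanding $\log$ of the right-hand side. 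So the first step is algebraic bookkeeping: set $u=\log(1/x)$ and rewrite the claimed bound as $I(x)\ge \sqrt{2}\,x\sqrt{u}\,(1-\tfrac{\log u}{2u}-\tfrac{\log(2\pi^{1/2})}{u})$, square both sides (legitimate once the bracket is nonnegative, which holds for $x\le 1/5$ since then $u\ge\log 5$ and one checks the bracket is positive there), and reduce to showing $\varphi(s)^2 \ge 2x^2 u (1-\tfrac{\log u}{2u}-\tfrac{\log(2\pi^{1/2})}{u})^2$.

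Next I would invoke a quantitative form of the Gaussian Mills-ratio / tail bound: for all $s>0$,
\[
1-\Phi(s) \le \frac{\varphi(s)}{s},
\]
equivalently $x = 1-\Phi(s)\le \varphi(s)/s$, which gives $x s \le \varphi(s)=I(x)$. Taking logs, $\log x + \log s \le -\tfrac12 s^2 - \tfrac12\log(2\pi)$, i.e. $s^2 \le -2\log x - \log(2\pi) - 2\log s = 2u - \log(2\pi) - 2\log s$. To close the loop one needs a lower bound on $s$ in terms of $u$; the crude bound $s^2 \le 2u$ (from $xs\le\varphi(s)\le(2\pi)^{-1/2}$, or directly) feeds back to give $\log s \le \tfrac12\log(2u) = \tfrac12\log 2 + \tfrac12\log u$. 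Substituting,
\[
s^2 \le 2u - \log(2\pi) - \log 2 - \log u = 2u - \log u - \log(4\pi^2)^{1/2}\cdot 2,
\]
wait — carefully: $-\log(2\pi)-\log 2 = -\log(4\pi) = -2\log(2\sqrt\pi)=-2\log(2\pi^{1/2})$. So $s^2 \le 2u - \log u - 2\log(2\pi^{1/2})$, which is exactly the shape needed: $\varphi(s)^2 = (2\pi)^{-1}e^{-s^2} \ge (2\pi)^{-1} e^{-2u+\log u + 2\log(2\pi^{1/2})} = (2\pi)^{-1}\cdot u\cdot x^2\cdot 4\pi = 2x^2 u$. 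This already gives $I(x)\ge\sqrt 2\,x\sqrt u$, which is slightly \emph{stronger} than the claimed bound (the bracket is $\le 1$); so in fact the subtractive correction in the bracket provides room to absorb the loss incurred when the Mills-ratio inequality and the $s^2\le 2u$ estimate are not tight. The honest version replaces $s^2\le 2u$ by the self-improving estimate: since $s^2 \ge 2u - \log u - 2\log(2\pi^{1/2}) - (\text{lower order})$ as well (from the matching lower Mills bound $1-\Phi(s)\ge \tfrac{s}{s^2+1}\varphi(s)$), one gets two-sided control and can pin down $\log s = \tfrac12\log(2u) + o(1)$ with explicit error $O(\log u / u)$; squaring the bracket and comparing term by term then yields the inequality for all $0<x\le 1/5$, the threshold $1/5$ being where the explicit error terms are dominated.

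The main obstacle is making the error analysis fully explicit and rigorous on the entire range $(0,1/5]$ rather than just asymptotically: one must track the $O(\log u / u)$ and $O(1/u)$ remainders from the Mills-ratio inequalities and from squaring $(1-\tfrac{\log u}{2u}-\tfrac{\log(2\pi^{1/2})}{u})^2 = 1 - \tfrac{\log u}{u} - \tfrac{2\log(2\pi^{1/2})}{u} + O(\tfrac{(\log u)^2}{u^2})$, and verify the net inequality holds down to $u=\log 5$. Depending on how tight things are near $x=1/5$, this is exactly the kind of one-variable positivity statement that can be discharged by $\textsc{Partition}_1$ with interval arithmetic as described in \S\ref{sec:auto}; I would expect the final write-up to combine the closed-form Mills-ratio bounds with a short computer-assisted check on a bounded sub-interval of $u$ where the asymptotic slack is smallest.
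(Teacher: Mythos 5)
There is a genuine error in your deduction of the bound $s^2 \le 2u - \log u - 2\log(2\pi^{1/2})$. Starting from $\varphi(s)\ge xs$ you correctly obtain $s^2 \le 2u - 2\log s - \log(2\pi)$. But $\log s$ enters this inequality with a \emph{minus} sign, so the auxiliary bound $\log s \le \tfrac12\log(2u)$ (from $s^2\le 2u$) gives $2u - 2\log s - \log(2\pi) \ge 2u - \log(2u) - \log(2\pi)$, i.e.\ it bounds the right-hand side from \emph{below}, and cannot be chained with $s^2 \le 2u-2\log s-\log(2\pi)$ to conclude $s^2\le 2u-\log(2u)-\log(2\pi)$. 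To upper-bound $s^2$ this way you would need a \emph{lower} bound on $s$, which your argument does not supply. The conclusion you reach, $I(x)\ge\sqrt2\,x\sqrt{u}$, is in fact false: at $x=1/10$ one has $s=\Phi^{-1}(9/10)\approx 1.28$, $s^2\approx 1.64$, while $2u-\log u-\log(4\pi)\approx 1.24$, and numerically $I(1/10)\approx 0.176 < 0.215\approx \sqrt2\cdot 0.1\cdot\sqrt{\log 10}$. You notice that the result looks too strong and propose to rescue it with a matching lower Mills-ratio bound and a computer-assisted check, but that step is only sketched and is precisely where the work would be.

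The paper sidesteps this difficulty with a cleaner parameterization. Instead of setting $s=\Phi^{-1}(1-x)$ (so that the relation between $x$ and $s$ is accessible only through two Mills inequalities of opposite sign), it sets $x=|t|^{-1}\varphi(t)$ for $t<0$; then $x|t|=(2\pi)^{-1/2}e^{-t^2/2}$ is an \emph{identity}, and the only inequality used is $I(x)\ge x|t|$, which follows from $\Phi(t)\le |t|^{-1}\varphi(t)$ and the monotonicity of $I$ on $[0,\tfrac12]$. From the identity, $|t|^2=2\log(1/x)-2\log((2\pi)^{1/2}|t|)$, and the crude observation $(2\pi)^{1/2}|t|>1$ (valid for $x\le\tfrac12$) immediately gives $|t|\le\sqrt{2\log(1/x)}$; plugging this \emph{upper} bound on $|t|$ back into the log term (which appears with a \emph{minus} sign relative to $|t|^2$, so the direction is right this time) yields the desired lower bound on $|t|$ and hence on $I(x)$, with no self-improving iteration, no lower Mills bound, and no interval arithmetic. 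If you want to salvage your route, you would need to invoke a lower Mills-ratio estimate such as $1-\Phi(s)\ge\frac{s}{s^2+1}\varphi(s)$ to get $\log s\ge\tfrac12\log(2u)-O(\tfrac{\log u}{u})$ before substituting; but the paper's reparameterization makes this entirely unnecessary.
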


This should be well-known, but since we could not find a reference, we provide a proof.
The estimate $I(x)\le \sqrt{2}\cdot x\sqrt{\log(1/x)}$ also holds and can be proved along the same lines, but we do not need this here.

\begin{proof}[Proof of Proposition \ref{prop:Ilowerbd}]
Let $t<0$. Integration by parts shows
\[ \int_{-\infty}^t e^{-s^2/2}\,ds
= -t^{-1} e^{-t^2/2} - \int_{-\infty}^t s^{-2} e^{-s^2/2}\,ds. \]
Thus,
\[ \Phi(t) = |t|^{-1} \varphi(t) - \int_{-\infty}^t s^{-2} \varphi(s)\,ds \le |t|^{-1} \varphi(t). \]
Applying $I=\varphi\circ\Phi^{-1}$ (an increasing function on $[0,\tfrac12]$) on both sides,
\[ \varphi(t) \le I(|t|^{-1} \varphi(t)) \]
at least as long as $x=|t|^{-1}\varphi(t)\le 1/2$.
This means
\[ I(x)\ge x\cdot |t|. \]
Rewriting
\[ x|t| = (2\pi)^{-1/2} e^{-t^2/2}, \]
we have
\[ |t| = \sqrt{2} \sqrt{\log(1/x)-
\log((2\pi)^{1/2}|t|)} \]
(the expression under the square root is positive because it equals $t^2/2$). The assumption $x\le \frac12$ implies $|t|>0.64$ which also means $(2\pi)^{1/2}|t|>1$, and thus
\[ |t|\le \sqrt{2}\sqrt{\log(1/x)}. \]
This, in turn, implies
\[\log((2\pi)^{1/2}|t|)\le \log(2 \pi^{1/2} \sqrt{\log(1/x)}) = \log(2\pi^{1/2}) + \tfrac12 \log\log(1/x), \]
and therefore we can write
\[ \sqrt{2} \sqrt{\log(1/x)} \sqrt{1-\varepsilon} = |t|,\]
where $\varepsilon = (\log(1/x))^{-1} \log((2\pi)^{1/2}|t|)$
satisfies
\[ \varepsilon \le \tfrac{\log(2\pi^{1/2})}{\log(1/x)} + \tfrac12 \tfrac{\log\log(1/x)}{\log(1/x)}\to 0\quad\text{as}\quad x\to 0^+. \]
In particular, $\varepsilon<1$ if $x\le \frac15$.
Thus,
\[ I(x)\ge \sqrt{2}\cdot x\sqrt{\log(1/x)} \sqrt{1-\varepsilon}, \]
and using $\sqrt{1-\varepsilon} \ge 1 - \varepsilon$ (since $\varepsilon\le 1$) the claim follows.
\end{proof}

We record the following consequence that will be needed in \S \ref{sec:poincare}.
\begin{cor}\label{prop:Jlowerbd}
For $0<x\le \tfrac1{64}$,
\[ J(1-x)\ge x\,\sqrt{\log(w_0/x)} \]
\end{cor}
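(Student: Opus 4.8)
\textbf{Proof proposal for Corollary \ref{prop:Jlowerbd}.}
The plan is to unwind the definition $J(1-x) = J_{w_0}(1-x) = \sqrt{2}\,w_0 I(w_0^{-1}x)$ and then apply the quantitative lower bound for the Gaussian isoperimetric profile from Proposition \ref{prop:Ilowerbd}. First I would set $y = w_0^{-1} x$ and observe that the hypothesis $0 < x \le \tfrac1{64}$ together with $w_0 \in [0.895, 0.896]$ guarantees $y \le \tfrac{1}{64\cdot 0.895} < \tfrac15$, so Proposition \ref{prop:Ilowerbd} applies to $I(y)$. Substituting gives
\[
J(1-x) = \sqrt{2}\,w_0\, I(y) \ge 2 w_0\, y \sqrt{\log(1/y)}\Big(1 - \tfrac12\tfrac{\log\log(1/y)}{\log(1/y)} - \tfrac{\log(2\pi^{1/2})}{\log(1/y)}\Big) = 2 x \sqrt{\log(w_0/x)}\Big(1 - \delta(x)\Big),
\]
where $\delta(x) = \tfrac12\tfrac{\log\log(w_0/x)}{\log(w_0/x)} + \tfrac{\log(2\pi^{1/2})}{\log(w_0/x)}$ and I have used $w_0 y = x$ and $1/y = w_0/x$. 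So the target inequality $J(1-x) \ge x\sqrt{\log(w_0/x)}$ reduces to showing $2(1-\delta(x)) \ge 1$, i.e. $\delta(x) \le \tfrac12$, on the range $0 < x \le \tfrac1{64}$.

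The remaining step is the elementary estimate $\delta(x) \le \tfrac12$. Writing $u = \log(w_0/x)$, the constraint $x \le \tfrac1{64}$ and $w_0 \ge 0.895$ gives $u \ge \log(64 \cdot 0.895) = \log(57.28) > 4$, so it suffices to show $\tfrac12\tfrac{\log u}{u} + \tfrac{\log(2\pi^{1/2})}{u} \le \tfrac12$ for $u \ge 4$. Since $\log(2\pi^{1/2}) = \tfrac12\log(2\pi) < 1$ and the function $u \mapsto \tfrac{\log u}{u}$ is decreasing for $u \ge e$, on $u \ge 4$ we have $\tfrac12\tfrac{\log u}{u} + \tfrac1u \le \tfrac12 \cdot \tfrac{\log 4}{4} + \tfrac14 < \tfrac12 \cdot 0.347 + 0.25 < 0.43 < \tfrac12$. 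This is a routine monotonicity-plus-evaluation argument that can either be done by hand as sketched or discharged with the interval-arithmetic partitioning of \S \ref{sec:auto}.

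I do not anticipate a genuine obstacle here: the only things to be careful about are (i) verifying the range condition $y \le \tfrac15$ so that Proposition \ref{prop:Ilowerbd} is actually applicable, using the explicit enclosure $w_0 \in [0.895, 0.896]$ from \eqref{eqn:x0def} and the surrounding discussion, and (ii) keeping track of the substitution $1/y = w_0/x$ so that the logarithmic factors match the claimed $\sqrt{\log(w_0/x)}$ exactly. The factor $2$ coming from $\sqrt{2}\,w_0 \cdot \sqrt{2} $ (more precisely from $\sqrt 2 w_0 I(y)$ and the $\sqrt 2$ inside the bound for $I$) provides a comfortable margin against the correction term $\delta(x)$, so there is no need to optimize constants.
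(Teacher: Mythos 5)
Your argument is correct and follows the same route as the paper: apply the quantitative lower bound for the Gaussian isoperimetric profile (Proposition \ref{prop:Ilowerbd}) via the identity $J(1-x)=\sqrt{2}\,w_0 I(w_0^{-1}x)$, and then check that the resulting factor $2(1-\varepsilon)$ exceeds $1$ for $x\le\tfrac1{64}$. The only stylistic difference is that you spell out the elementary monotonicity bound for $\varepsilon$ in terms of $u=\log(w_0/x)$, whereas the paper simply notes that $\varepsilon$ is decreasing and evaluates at $x=\tfrac1{64}$; both are fine.
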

This follows from Proposition \ref{prop:Ilowerbd}. From \eqref{eqn:JintermsofI},
\[J(1-x)=\sqrt{2} w_0 I(x w_0^{-1})\]
and $w_0\in [0.895, 0.896]$.
Thus
\begin{equation}\label{eqn:Jlowerbd}
J(1-x) \ge 2 x \sqrt{\log(w_0/x)} (1- \varepsilon),
\end{equation}
where
\[ \varepsilon = \tfrac12 \tfrac{\log\log(w_0/x)}{\log(w_0/x)} + \tfrac{\log(2\pi^{1/2})}{\log(w_0/x)}. \]
Finally, if $x\le \frac1{64}$, then $2 (1-\varepsilon)> 1$
(by evaluating this decreasing function at $x=\frac1{64}$).

\begin{lem}\label{lem:JvsL_fun}
For all $x\in (\frac12, 1-10^{-10^{380}})$,
\[ J(x)>L_{\beta_0}(1-x).\]
\end{lem}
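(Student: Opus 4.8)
The plan is to compare the two functions via their behavior near $x=\tfrac12$ together with monotonicity/convexity information and to push the analysis out to the stated enormous cutoff using the asymptotics of $J$ near $1$. First I would record that $J(\tfrac12)=L_{\beta_0}(\tfrac12)=\tfrac12$, so the inequality degenerates at the left endpoint; the natural approach is therefore to compare derivatives. Writing $g(x)=J(x)-L_{\beta_0}(1-x)$, I would compute $g'(\tfrac12)$ explicitly: $J'(\tfrac12)=J_{w_0}'(\tfrac12)$, which from $J_w(x)=\sqrt2\,w\,I(w^{-1}(1-x))$ equals $-\sqrt2\,I'(w_0^{-1}(1-x))|_{x=1/2}=-\sqrt2\,I'(\tfrac{1}{2w_0})$, while $\tfrac{d}{dx}L_{\beta_0}(1-x)=-L_{\beta_0}'(1-x)$, which at $x=\tfrac12$ is $-L_{\beta_0}'(\tfrac12)$. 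Using \eqref{eqn:DL} one evaluates $L_{\beta_0}'(\tfrac12)=(\log 2)^{-\beta_0}(\log 2)^{-1+\beta_0}(-\beta_0+\log 2)=(\log 2)^{-1}(\log 2-\beta_0)=1-\beta_0/\log 2$, which is a small negative-ish number (since $\beta_0\approx0.50057$ and $\log 2\approx 0.693$, this is about $0.278>0$). One then checks $g'(\tfrac12)>0$, i.e. $J'(\tfrac12)>-L_{\beta_0}'(\tfrac12)$; this is a single inequality between explicit constants (involving $w_0\in[0.895,0.896]$, $I'$ at a specific point, and $\beta_0$) and can be certified by interval arithmetic, i.e. a {\tiny\faCar} claim. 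This gives $J(x)>L_{\beta_0}(1-x)$ on some right-neighborhood of $\tfrac12$.

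Next I would extend this to a fixed interval $[\tfrac12, 1-\delta_0]$ for some explicit $\delta_0$ (say $\delta_0=10^{-3}$ or so) by a direct positivity verification: on this compact interval both $J$ (expressible via $I=\varphi\circ\Phi^{-1}$) and $L_{\beta_0}(1-x)$ have explicit, interval-arithmetic-friendly formulas with controlled derivatives, so \textsc{Partition}$_1$ applied to a tight lower bound of $g$ on $[\tfrac12,1-\delta_0]$ should succeed. (Care is needed near $x=\tfrac12$ since $g(\tfrac12)=0$ and strict positivity fails at the endpoint; one handles this by splitting off a tiny initial interval $[\tfrac12,\tfrac12+\eta]$ on which the derivative lower bound from the previous step gives $g(x)\ge (g'(\tfrac12)-o(1))(x-\tfrac12)>0$, and running the partition on $[\tfrac12+\eta,1-\delta_0]$.)

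Finally, for $x$ extremely close to $1$, i.e. $1-x=t\in(0,\delta_0)$, I would use the sharp lower bound for $J$ near $1$ from Corollary \ref{prop:Jlowerbd}/\eqref{eqn:Jlowerbd}: $J(1-t)\ge 2t\sqrt{\log(w_0/t)}(1-\varepsilon(t))$ with $\varepsilon(t)\to 0$, and compare against $L_{\beta_0}(t)=t(\log_2(1/t))^{\beta_0}=t(\log(1/t)/\log 2)^{\beta_0}$. Dividing by $t$, the claim becomes $2\sqrt{\log(w_0/t)}(1-\varepsilon(t))>(\log 2)^{-\beta_0}(\log(1/t))^{\beta_0}$. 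Since $\beta_0<\tfrac12$, the left side grows like $2(\log(1/t))^{1/2}$ while the right side grows only like $(\log(1/t))^{\beta_0}$, so for $\log(1/t)$ large the inequality holds with enormous room to spare; the crossover governs the bizarre constant $10^{-10^{380}}$. Concretely, setting $u=\log(1/t)\in(\log(1/\delta_0),\infty)$, one reduces to a one-variable inequality $2\sqrt{u+\log w_0}\,(1-\varepsilon)>(\log 2)^{-\beta_0}u^{\beta_0}$ where $\varepsilon$ is an explicit decreasing function of $u$; this is monotone-behaved and can be resolved by checking it at $u=u_0:=\log(1/\delta_0)$ (after absorbing $\varepsilon$ by, say, $1-\varepsilon\ge \tfrac12$ for $u$ large, or a sharper bound for the transitional range) and verifying the left side dominates; the threshold on $u$ where $u^{\beta_0-1/2}$ beats the constants is of order $(\text{const})^{1/(1/2-\beta_0)}$ with $1/2-\beta_0\approx 5.7\cdot 10^{-4}$, which produces a doubly-exponential bound on $1/t$, hence the stated cutoff. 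The main obstacle is bookkeeping this last comparison carefully enough to land on an honest (if astronomically large) value such as $1-10^{-10^{380}}$, i.e. tracking all the constants ($w_0$, $\log 2$, $\beta_0$, the error term $\varepsilon$) through the inequality $2\sqrt{u}\gtrsim u^{\beta_0}$ and confirming the crossover point is below the claimed threshold; the near-$\tfrac12$ endpoint analysis is routine but must be done with the strict-positivity subtlety in mind, and the middle interval is a standard \textsc{Partition}$_1$ call.
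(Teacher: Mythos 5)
Your high‑level strategy — handle a neighborhood of $\tfrac12$, a middle compact interval, and the extreme range near $1$ separately, using the sharp lower bound for $J$ near $1$ in the last step — matches the spirit of the paper, but your near‑$\tfrac12$ handling has a genuine gap, and your decomposition is slightly different from what the paper actually does.

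The gap: your argument at $x$ near $\tfrac12$ relies on $g(x)\ge (g'(\tfrac12)-o(1))(x-\tfrac12)>0$, but the $o(1)$ is not a rigorous quantity. To convert this into an actual proof you would need an explicit bound on $g''$ on some interval $[\tfrac12,\tfrac12+\eta]$, and once you have that you have essentially re‑derived what the paper does instead: the paper shows (via a single $\textsc{Partition}_1$ call certifying $-\partial_x^2 g>0.08$) that $g(x)=J(x)-L_{\beta_0}(1-x)$ is \emph{strictly concave} on the entire interval $[\tfrac12,\tfrac{2047}{2048}]$. Combined with $g(\tfrac12)=0$ and $g(\tfrac{2047}{2048})>10^{-4}$, strict concavity immediately gives $g>0$ on $(\tfrac12,\tfrac{2047}{2048}]$. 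This removes both the need for a derivative computation at $\tfrac12$ and for a separate ``middle interval'' partition; there is only one partition call and it verifies a second‑derivative sign, not the value of $g$. Your plan of running $\textsc{Partition}_1$ on a tight lower bound for $g$ itself on $[\tfrac12+\eta,1-\delta_0]$ is fine in principle but more fragile: near $\tfrac12$, $g$ is small and the partition will have to refine heavily, whereas certifying the second derivative avoids the degeneracy entirely.

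For the far range your outline is the right idea but is where the real work lives, and you essentially acknowledge you have not done it. The paper makes the reduction precise: substituting $u=\log(w_0/s)$ and using $(u+v)^{\beta_0}\le u^{\beta_0}+v^{\beta_0}$ leads to the explicit one‑variable inequality
\[
2u-\log u-\log(4\pi)-(\log 2)^{-\beta_0} u^{1/2}\bigl(u^{\beta_0}+\log(w_0^{-1})^{\beta_0}\bigr)>0
\]
for $u\in[7,10^{381}]$, which is then handled by replacing $-\log u-\log(4\pi)$ by its minimum on the range and invoking Descartes' rule of signs to locate the two sign changes of the resulting fractional polynomial and verify they lie outside $[1200,10^{381}]$ (with a similar argument on $[7,1200]$). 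You correctly identify $1/2-\beta_0\approx 5.7\cdot 10^{-4}$ as the source of the doubly‑exponential cutoff, but without carrying out this sign‑change analysis the bound $1-10^{-10^{380}}$ is not actually established.

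In short: the overall route is the same (local analysis near $\tfrac12$, asymptotics of $J$ near $1$, computer‑verified intermediate step), but you should replace the $o(1)$ derivative argument by the concavity certificate, and you need to carry out the one‑variable comparison explicitly (e.g. by a Descartes‑type count) to land the astronomical cutoff.
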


\begin{proof}
We begin with the case $\frac12<x\le \frac{2047}{2048}$.
Using \eqref{eqn:DDL}, a lower bound for
\[-\partial_x^2[J(x)-L_\beta(1-x)]\]
is
\[ \underline{g_{JL,\beta}}(\underline{x},\overline{x}) = 2 \overline{J}(\underline{x},\overline{x})^{-1} -\beta \log(2)^{-\beta} (1-\overline{x})^{-1} (1-\beta+\log\tfrac1{1-\overline{x}})(\log\tfrac{1}{1-\underline{x}})^{-2+\beta} \]
(here $\overline{J}$ is an upper enclosure for $J$, see \eqref{eqn:Jenclosure}).

Running $\textsc{Partition}_1(\underline{g_{JL,\beta_0}},[\frac12,\frac{2047}{2048}])$ shows
\begin{equation}\label{eqn:g_JL_auto}\auto
-\partial_x^2[J(x)-L_\beta(1-x)] > 0.08,
\end{equation}
for $x\in [\frac12,\frac{2047}{2048}]$, so $J(x)-L_{\beta_0}(1-x)$ is a strictly concave function on this interval and it suffices to evaluate it at $x=\frac12$, where the value is zero and at $x=\frac{2047}{2048}$, where the value is $>10^{-4}$.

Next we consider the case $\frac{2047}{2048}<x<1-10^{-10^{380}}$.
Setting $s=1-x<\frac1{2048}$, \eqref{eqn:Jlowerbd} implies
\[ J(x)=J(1-s) \ge 2 s(1-\varepsilon) \sqrt{\log(w_0/s)} \]
with $
\varepsilon = \tfrac12 \tfrac{\log\log(w_0/s)}{\log(w_0/s)} + \tfrac{\log(2\pi^{1/2})}{\log(w_0/s)}.$
Thus,
\[ J(x) - L_\beta(1-x) \ge s \Big(2(1-\varepsilon)\sqrt{\log(w_0/s)} - (\log 2)^{-\beta_0} (\log(1/s))^{\beta_0}\Big). \]
Let $u=\log(w_0/s)$.
If $s\in (\frac1{2048}, 10^{-10^{380}})$, then $u\in (7, 10^{381})$.
Thus it suffices to show
\begin{equation}\label{eqn:funnylemmapf1}
2u-\log(u)-\log(4\pi) - (\log 2)^{-\beta_0} u^{\frac12}(u^{\beta_0} +\log(w_0^{-1})^{\beta_0})>0
\end{equation}
for all $u\in [7,10^{381}]$.
Here we have used $\log(1/s)=u+\log(w_0^{-1})$ and $(u+v)^{\beta_0}\le u^{\beta_0}+v^{\beta_0}$.
We first do so on the interval, say $u\in [1200, 10^{381}]$.
Evaluating the decreasing function $u\mapsto -\log(u)-\log(4\pi)$ at $u=10^{381}$ and the coefficients of other terms, the left-hand side of \eqref{eqn:funnylemmapf1} is
\[ > - 1.21 u^{1.00057}+2u -0.4 u^{0.5} - 880. \]
This is a fractional polynomial in $u$ and by Descartes' rule of signs it can have at most two sign changes on $(0,\infty)$. Evaluating and using the intermediate value theorem then shows that it has exactly two roots, one in the interval $u\in [1000, 1200]$ and one in the interval $u\in [10^{381}, 10^{390}]$ with positive value on the interval $u\in [1200, 10^{381}]$.
Showing that \eqref{eqn:funnylemmapf1} also holds for $u\in [7, 1200]$ follows the same argument.
\end{proof}
\begin{rem}
Here is a ``back-of-the-envelope'' version of this calculation: From the asymptotic estimates for $J$ one sees that $J(1-s)-L_\beta(s)$ must change sign roughly where
\[ 2(\log 2)^{\beta_0}=(\log(1/s))^{\beta_0-\frac12} \]
\[ s = e^{-(2(\log 2)^{\beta_0})^{1/(\beta_0-1/2)}} \approx 10^{-9.4\cdot 10^{387}} \]
Controlling the error shows the zero must lie in $[10^{-10^{388}}, 10^{-10^{387}}]$.
\end{rem}

\section{Proof of the two-point inequality}\label{sec:maintwopt}
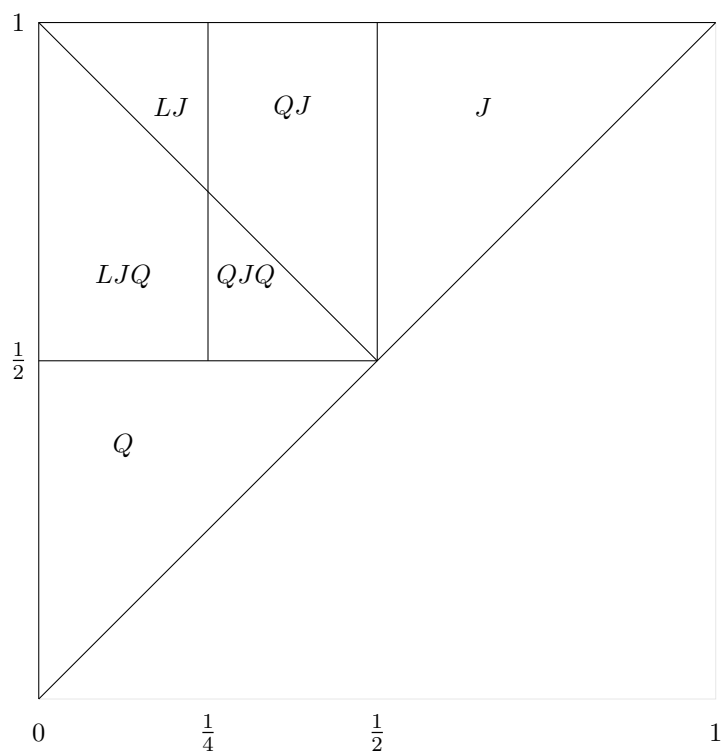
\begin{figure}[th]
\centering
\begin{tikzpicture}[scale=9]

\draw (0,0)--(0,1)--(1,1);
\draw[opacity=.1] (0,0)--(1,0)--(1,1);
\draw (0,0) -- (1,1);
\draw (0,1/2) -- (1/2,1/2);
\draw (1/4,1/2) -- (1/4,1);
\draw (0,1) -- (1/2,1/2);
\draw (1/2,1/2) -- (1/2,1);
\node at (0,-.05) {\footnotesize $0$};
\node at (1/4,-.05) {$\tfrac{1}{4}$};
\node at (1/2,-.05) {$\tfrac{1}{2}$};
\node at (1,-.05) {\footnotesize $1$};
\node at (-.03,1/2) {$\tfrac{1}{2}$};
\node at (-.03,1) {\footnotesize $1$};
\node at (4/32,6/16) {\footnotesize \hyperref[sec:caseQ]{$Q$}};
\node at (1/2+5/32,7/8) {\footnotesize \hyperref[sec:caseJ]{$J$}};
\node at (4/32,10/16) { \footnotesize \hyperref[sec:caseLJQ]{$LJQ$}};
\node at (4.9/16,10/16) {\footnotesize \hyperref[sec:caseQJQ]{$QJQ$}};
\node at (6.25/32,7/8) {\footnotesize \hyperref[sec:caseLJ]{$LJ$}};
\node at (6/16,7/8) {\footnotesize \hyperref[sec:caseQJ]{$QJ$}};

\end{tikzpicture}
\caption{Case distinction in the proof of Theorem \ref{thm:maintwopt}.}
\label{fig:cases}
\end{figure}

In this section we prove Theorem \ref{thm:maintwopt}. The proof is split into various subcases according to the piecewise definition of $b_\beta$ in \eqref{eqn:bbdef} (see Figure~\ref{fig:cases}).

\subsection{\texorpdfstring{Case $J$: $\frac12\le x\le y\le 1$}{Case J}}
\label{sec:caseJ}
This is the most critical case. In particular, this is where the precise values of $\beta_0$, $c_0$ originate.
Recall from Corollary \ref{cor:JsatisfiesBobkovtwopt} that for $x_0\le x\le y\le 1$ the function $J$ satisfies the best possible estimate, $G^L_{\beta}[J](x,y)\ge 0$ for $\beta=\frac12$. When $x<x_0\approx 0.552$ this estimate fails for $\beta=\frac12$.

\begin{prop}\label{prop:caseJ}
Let $\beta\ge \beta_0=0.50057$ and $c_0=0.997$. Then for $\frac12\le x\le x_0$ and $x\le y\le 1$,
\[G^1_{\beta}[J](x,y)\ge 0\quad\text{and}\quad
G^1_{\frac12}[c_0 J](x,y)\ge 0. \]
\end{prop}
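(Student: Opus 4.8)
The plan is to reduce both inequalities to a single scalar positivity statement in the variables $(x,h)$ with $h=y-x$ and then verify that statement with the computer-assisted partitioning machinery of \S \ref{sec:auto}. Concretely, recall $G^1_\beta[J](x,y) = ((y-x)^{1/\beta}+J(y)^{1/\beta})^\beta + J(x) - 2J(\tfrac{x+y}2)$. The first observation is a monotonicity-in-$\beta$ reduction: since $t\mapsto t^{1/\beta}$ and $u\mapsto u^\beta$ interact so that $((y-x)^{1/\beta}+J(y)^{1/\beta})^\beta$ is increasing in $\beta$ for fixed $y-x, J(y)$ (this is the same Hölder-type fact underlying Lemma \ref{lem:holder} and Lemma \ref{lem:GLvsGR}), while the remaining terms $J(x)-2J(\tfrac{x+y}2)$ do not depend on $\beta$, it suffices to prove $G^1_{\beta_0}[J]\ge 0$; the case $\beta\ge \beta_0$ follows. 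For the second inequality, $G^1_{1/2}[c_0 J]$, note that with $\beta=\tfrac12$ we have $((y-x)^2 + (c_0 J(y))^2)^{1/2} + c_0 J(x) - 2 c_0 J(\tfrac{x+y}{2})$, and since $c_0<1$ this is $\ge c_0\big(((y-x)^2+J(y)^2)^{1/2} + J(x) - 2 J(\tfrac{x+y}{2})\big) = c_0\, G^1_{1/2}[J](x,y)$; so it would suffice to prove $G^1_{1/2}[J]\ge 0$ on the region — but that is exactly what fails for $x<x_0$ by Remark \ref{rem:Jbobkovtwoptfailure}, which is why the factor $c_0$ is genuinely needed and the naive reduction is too lossy. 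So instead I would keep $c_0$ and prove $G^1_{1/2}[c_0 J](x,y)\ge 0$ directly on $[\tfrac12,x_0]\times[\,\cdot\,,1]$ as its own positivity claim, and likewise $G^1_{\beta_0}[J](x,y)\ge 0$ as its own claim.

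The main technical step is to set up a \emph{tight lower bound} $\underline{G}$ for each of these two functions of $(x,y)$ on the relevant compact region, suitable for \textsc{Partition}$_2$. The only non-elementary ingredient is $J$ itself; I would use an explicit interval enclosure $\overline{\underline{J}}$ (the enclosure \eqref{eqn:Jenclosure} referenced elsewhere, built from $J=\sqrt2\, w_0\, I(w_0^{-1}(1-\cdot))$ and a rigorous enclosure of the Gaussian isoperimetric profile $I$), together with the fact from Lemma \ref{lem:Iprimesqconvex} / Corollary \ref{cor:JsatisfiesBobkovtwopt} that $J$ is concave, which makes $J(x)-2J(\tfrac{x+y}2)$ easy to lower bound monotonically. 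Plugging lower enclosures for $J(x)$, $J(y)$ and the correct-sign (lower) enclosure for $-2J(\tfrac{x+y}2)$ — here concavity lets us bound $J(\tfrac{x+y}2)$ above by the chord value, avoiding any need to enclose $J$ on the shifted argument interval — yields an explicit $\underline{G}^1_{\beta_0}[J]$ and $\underline{G}^1_{1/2}[c_0J]$. A subtlety: near the diagonal $y=x$ one has $G^1=0$ identically, so a crude box bound will never be strictly positive there; I would handle the near-diagonal strip separately using Lemma \ref{lem:neardiaglowerbd}, which isolates the leading term $\beta J(y)^{1-1/\beta}h^{1/\beta}$ and the (negative, by concavity) bracket $J(x)+J(x+h)-2J(x+\tfrac h2)$, reducing near-diagonal positivity to a one-variable comparison of these two, controllable via the second derivative bound $J''J=-2$ exactly as in \eqref{eqn:g_JL_auto}. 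Away from the diagonal, $G^1$ is bounded below by a positive constant on the compact region and \textsc{Partition}$_2$ terminates.

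I expect the main obstacle to be quantitative sharpness at the corner $x=\tfrac12$, $y$ slightly above $\tfrac12$ — this is precisely where, per Remark \ref{rem:beta0explanation} and Figure \ref{fig:failure}, the inequality for $J$ transitions from true to false as $\beta\downarrow\tfrac12$, and it is why $\beta_0=0.50057$ and $c_0=0.997$ take the specific values they do. Near that corner both competing quantities in $G^1$ are comparable and the margin is tiny ($O(10^{-5})$ scale), so the near-diagonal analysis must be done carefully: the bracket $J(x)+J(y)-2J(\tfrac{x+y}2)$ is $O(h^2)$ with coefficient $\sim \tfrac12 J''(\tfrac12)=-2/J(\tfrac12)=-4$, while the leading gain term is $\beta J(\tfrac12)^{1-1/\beta}h^{1/\beta}$, which for $\beta=\beta_0$ slightly above $\tfrac12$ has exponent $1/\beta_0<2$ and so dominates $h^2$ for small $h$ — but the constant competition is delicate and one must track the $E_1$ error term and the variation of $J''$ over the strip. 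For the $c_0$ case at $\beta=\tfrac12$ exactly, the gain term is itself $O(h^2)$ (since $1/\beta=2$), with coefficient $\tfrac12 J(\tfrac12)^{-1}=1$ versus the bracket coefficient, and the factor $c_0<1$ on $J$ changes the bracket but not the gain in a way that must be shown to tip the balance; I would treat this with an explicit Taylor expansion to second order in $h$ at each $x\in[\tfrac12,x_0]$, reduce to a one-variable inequality in $x$, and close it with \textsc{Partition}$_1$.
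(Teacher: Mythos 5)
Your overall plan is the right one and largely matches the paper: reduce to $\beta=\beta_0$ by monotonicity of $\beta\mapsto((y-x)^{1/\beta}+J(y)^{1/\beta})^\beta$, treat the $c_0$ case separately, split into a near-diagonal strip handled via Lemma \ref{lem:neardiaglowerbd} and a far-from-diagonal region handled by $\textsc{Partition}_2$. You also correctly identify the corner $(x,y)\approx(\tfrac12,\tfrac12)$ as the quantitatively delicate spot. But there are three concrete problems with the details.

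First, the concavity bound goes the wrong way. A concave $J$ lies \emph{above} its chords, so the chord value is a \emph{lower} bound for $J(\tfrac{x+y}{2})$, hence an \emph{upper} bound for the term $-2J(\tfrac{x+y}{2})$ --- which is useless for lower-bounding $G^1$. To get the needed upper bound on $J(\tfrac{x+y}{2})$ one must either use a tangent line or an interval enclosure of $J$ on the shifted argument interval. The paper avoids the issue altogether in its far-from-diagonal case (``Case II'') by working with the quantity $2J(\tfrac{x+y}{2})-J(x)$ and showing it is monotone decreasing in both $x$ and $y$ on the relevant region (using $J'<0$ there and $J''<0$), which yields a clean tight lower bound without any chord trick.

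Second, a second-order Taylor expansion in $h$ will not close the near-diagonal case. The available margin at $x=\tfrac12$ is of order $10^{-3}$ (roughly $\frac1{c_0}-c_0\approx 0.006$ for the $c_0$ case, and even smaller as a function of $\beta_0-\tfrac12$), while the cubic and higher remainder terms, driven by $J^{(3)}=2J'J^{-2}$ etc., are of order $h^3$ with coefficients of order $1$. This would force the strip width $h$ to be very small (on the order of $10^{-2}$ or less), and you give no argument for how the far-from-diagonal partitioning would then terminate arbitrarily close to that thin strip. The paper resolves this by expanding $J(x)+J(x+h)-2J(x+\tfrac{h}{2})$ to order $N=6$ (using the closed forms of $J^{(3)},\dots,J^{(6)}$ derived from $JJ''=-2$) and running $\textsc{Partition}_2$ in $(x,h)$ on the full near-diagonal rectangle $[\tfrac12,\tfrac58]\times[0,\tfrac{3}{16}]$.

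Third, your statement that ``the factor $c_0<1$ on $J$ changes the bracket but not the gain'' is incorrect: replacing $J$ by $c_0J$ scales the gain term by $c_0^{1-1/\beta}$ (at $\beta=\tfrac12$ this is $c_0^{-1}>1$), scales the bracket by $c_0$, and scales the $E_1$ error by $c_0^{1-2/\beta}$. All three terms move, and tracking them correctly is what makes the specific value $c_0=0.997$ work; your heuristic would understate the benefit of $c_0$ on the gain and misjudge the $E_1$ error. These are genuine gaps, not cosmetic ones: with the chord direction reversed and only a second-order expansion, the proposed lower bound would not be nonnegative near the diagonal.
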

By monotonicity in $\beta$, we only need to show $G_{\beta_0}^1[J]\ge 0$ to show the first part.
The proof is split into two further cases according to the size of $y$ (see Figure \ref{fig:jcrit}).
\begin{figure}[hb]
\centering
\begin{tikzpicture}[yscale=14,xscale=32]
\def\Jpicturefakeone{.8}
\draw (1/2,1/2)--(1/2,\Jpicturefakeone)--(.552,\Jpicturefakeone);
\draw (1/2,1/2)--(0.552, 0.552);
\draw (0.552,0.552)--(0.552,\Jpicturefakeone);
\draw (1/2,0.615)--(0.552,0.615);
\node at (1/2,1/2-.03) {\footnotesize $\tfrac{1}{2}$};
\node at (0.552,1/2-.03) {\footnotesize $x_0<\tfrac{5}8$};
\node at (1/2-.005,1/2) {\footnotesize $\tfrac{1}{2}$};
\node at (1/2-.007,0.615) {\footnotesize $\tfrac{11}{16}$};
\node at (1/2-.005,\Jpicturefakeone) {\tiny $1$};
\draw[opacity=.1,color=gray] (.5,.5) -- (.552, .5) -- (.552, .552);
\node at (.525, .57) {\tiny \hyperref[sec:J-I]{I}};
\node at (.525, .7) {\tiny \hyperref[sec:J-II]{II}};

\end{tikzpicture}
\caption{Critical region of Case $J$: $\frac12\le x\le x_0$}
\label{fig:jcrit}
\end{figure}
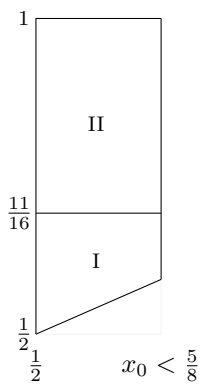

\subsubsection{Case I: $\frac12\le x\le x_0$, $x\le y\le \frac{11}{16}$}
\label{sec:J-I}
Let $h=y-x$.
The proof will use automatic dyadic partitioning as described in \S \ref{sec:auto}.
Since $x_0\approx 0.552$ is not a floating point number, we must account for cases when $x$ is slightly larger than $x_0$. This causes no issues. In fact, the argument is robust enough to afford a generous margin away from $x_0$ and with no additional effort we will prove the desired inequality in the larger region
\[ \tfrac12\le x\le \tfrac58, 0\le h\le \tfrac{3}{16}. \]
By Lemma \ref{lem:neardiaglowerbd},
\[ G^1_\beta[c\cdot J](x,x+h)\ge \beta c^{1-\frac1\beta} J(x+h)^{1-\frac1\beta}h^{\frac1\beta} + c(J(x) + J(x+h)-2J(x+\tfrac{h}2)) \]
\[- \tfrac12 \beta(1-\beta) c^{1-\frac2\beta} J(x+h)^{1-\frac2\beta}h^{\frac2\beta}. \]
Taylor's theorem shows that for every $N\ge 2$:
\[ J(x) + J(x+h)-2J(x+\tfrac{h}2) = \sum_{k=2}^{N-1} \tfrac1{k!}(1-2^{-k+1}) J^{(k)}(x)h^k + E_N, \]
where
\[ E_N = \tfrac1{N!}(J^{(N)}(\xi_1)-2^{-N+1} J^{(N)}(\xi_2)) h^N \]
and $\xi_1\in [x,x+h]$, $\xi_2\in [x,x+\frac{h}2]$ are intermediate values.
Thus,
\[ G_\beta^1[c\cdot J](x,x+h)\ge h^{\frac1\beta} g_{J,1,\beta,c}(x,h), \]
where
\[ g_{J,1,\beta,c}(x,h)= \beta c^{1-\frac1\beta}J(x+h)^{1-\frac1\beta} - \tfrac12 \beta(1-\beta)c^{1-\frac2\beta}J(x+h)^{1-\frac2\beta}h^{\frac1\beta} \] \[ +c \sum_{k=2}^{N-1} \tfrac1{k!}(1-2^{-k+1}) J^{(k)}(x)h^{k-\frac1\beta} + c E_N h^{-\frac1\beta}. \]
To get a sufficiently accurate lower bound we will unfortunately need to use $N=6$ which makes the following expressions somewhat lengthy. From $J\cdot J''=-2$, we may calculate
\[ J^{(3)}=2J' J^{-2},\]
\begin{equation}\label{eqn:D4J}
J^{(4)}=-4 (1+|J'|^2)J^{-3}<0,
\end{equation}
\[ J^{(5)}=4J'(7+3|J'|^2)J^{-4}, \]
\[ J^{(6)}=-8(7+23|J'|^2+6|J'|^4)J^{-5}<0. \]
Recall that $J$ is increasing on $[\frac12,x_0]$ and decreasing on $[x_0,1]$ (see \eqref{eqn:JintermsofI} and \eqref{eqn:x0def}).
Thus an interval enclosure for $J$ is given by
\begin{equation}\label{eqn:Jenclosure}
\underline{J}(\underline{x},\overline{x})=\min(J(\underline{x}),J(\overline{x})),
\end{equation}
\[ \overline{J}(\underline{x},\overline{x})=\left\{
\begin{array}{ll}
J(\overline{x}), & \text{if}\;\overline{x}<x_0,\\
J(\underline{x}), & \text{if}\;\underline{x}>x_0,\\
J(x_0), & \text{else}.
\end{array}
\right.\]
Also observe $J'(x_0)=0$ and $J''<0$, so $J'$ is strictly decreasing and $|J'|$ has the interval enclosure
\[ \underline{|J'|}(\underline{x},\overline{x})= \left\{
\begin{array}{ll}
J'(\overline{x}), & \text{if}\;\overline{x}<x_0,\\
-J'(\underline{x}), & \text{if}\;\underline{x}>x_0,\\
0, & \text{else},
\end{array}
\right.\]
\[ \overline{|J'|}(\underline{x},\overline{x}) = \max(|J'(\underline{x})|, |J'(\overline{x})|). \]
The odd order derivatives $J^{(3)}$ and $J^{(5)}$ change sign at $x_0$ and admit the tight lower bounds
\[ \underline{J^{(3)}}(\underline{x},\overline{x}) = 2 J'(\overline{x}) J(\overline{x})^{-2} \mathbf{1}_{\overline{x}<x_0} - 2 \overline{|J'|}(\underline{x},\overline{x}) \underline{J}(\underline{x},\overline{x})^{-2} \mathbf{1}_{\text{not}\;\overline{x}< x_0} \]
and
\[ \underline{J^{(5)}}(\underline{x},\overline{x}) = 4 J'(\overline{x})(7+3 |J'(\overline{x})|^2)J(\overline{x})^{-4}\mathbf{1}_{\overline{x}<x_0} \]
\[\hspace{2cm}- 4 \overline{|J'|}(\underline{x},\overline{x})(7+3\overline{|J'|}(\underline{x},\overline{x})^2)\underline{J}(\underline{x},\overline{x})^{-4}\mathbf{1}_{\text{not}\;\overline{x}< x_0}. \]
Now we can compose a tight lower bound for $g_{J,1,\beta,c}$ as
\[ \underline{g_{J,1,\beta,c}}(\underline{x},\overline{x},\underline{h},\overline{h}) = \beta c^{1-\frac1\beta} \overline{J}(\underline{x}+\underline{h},\overline{x}+\overline{h})^{1-\frac1\beta} - \tfrac12 \beta(1-\beta)c^{1-\frac2\beta}\underline{J}(\underline{x}+\underline{h},\overline{x}+\overline{h})^{1-\frac2\beta}\overline{h}^{\frac1\beta} \]
\[ - \tfrac{c}2 \underline{J}(\underline{x},\overline{x})^{-1} \overline{h}^{2-\frac1\beta}
+c \Big(\tfrac14 J'(\overline{x}) J(\overline{x})^{-2}\underline{h}^{3-\frac1\beta} + \tfrac1{32}J'(\overline{x})(7+3 |J'(\overline{x})|^2)J(\overline{x})^{-4}\underline{h}^{5-\frac1\beta}\Big)\mathbf{1}_{\overline{x}<x_0}
\]
\[ - c\Big(\tfrac14 \overline{|J'|}(\underline{x},\overline{x}) \underline{J}(\underline{x},\overline{x})^{-2} \overline{h}^{3-\frac1\beta} +
\tfrac1{32} \overline{|J'|}(\underline{x},\overline{x})(7+3\overline{|J'|}(\underline{x},\overline{x})^2)\underline{J}(\underline{x},\overline{x})^{-4} \overline{h}^{5-\frac1\beta}
\Big)\mathbf{1}_{\text{not}\;\overline{x}< x_0} \]
\[ -\tfrac{7c}{48}(1+\overline{|J'|}(\underline{x},\overline{x})^2)\underline{J}(\underline{x},\overline{x})^{-3}\overline{h}^{4-\frac1\beta} \]
\[ -\tfrac{c}{90}(7+23\overline{|J'|}(\underline{x},\overline{x}+\overline{h})^2 + 6 \overline{|J'|}(\underline{x},\overline{x}+\overline{h})^4)\underline{J}(\underline{x},\overline{x}+\overline{h})^{-5}\overline{h}^{6-\frac1\beta} \]
\[ + \tfrac{c}{2880}(7+23\underline{|J'|}(\underline{x},\overline{x}+\tfrac12\overline{h})^2+6 \underline{|J'|}(\underline{x},\overline{x}+\tfrac12\overline{h})^4)\overline{J}(\underline{x},\overline{x}+\tfrac12\overline{h})^{-5} \underline{h}^{6-\frac1\beta}. \]
Running $\textsc{Partition}_2(\underline{g_{J,1,\beta,c}},[\tfrac12,\tfrac58]\times [0,\tfrac3{16}])$ for each $(\beta,c)\in \{(\beta_0,1),(\tfrac12,c_0)\}$ shows that
\begin{equation}\label{eqn:g_J_1_auto}\auto
g_{J,1,\beta,c}>10^{-7}
\end{equation}
on this rectangle.
Figure~\ref{fig:critpartition} visualizes the admissible partition.
\begin{figure}[ht]
\includegraphics[width=7cm]{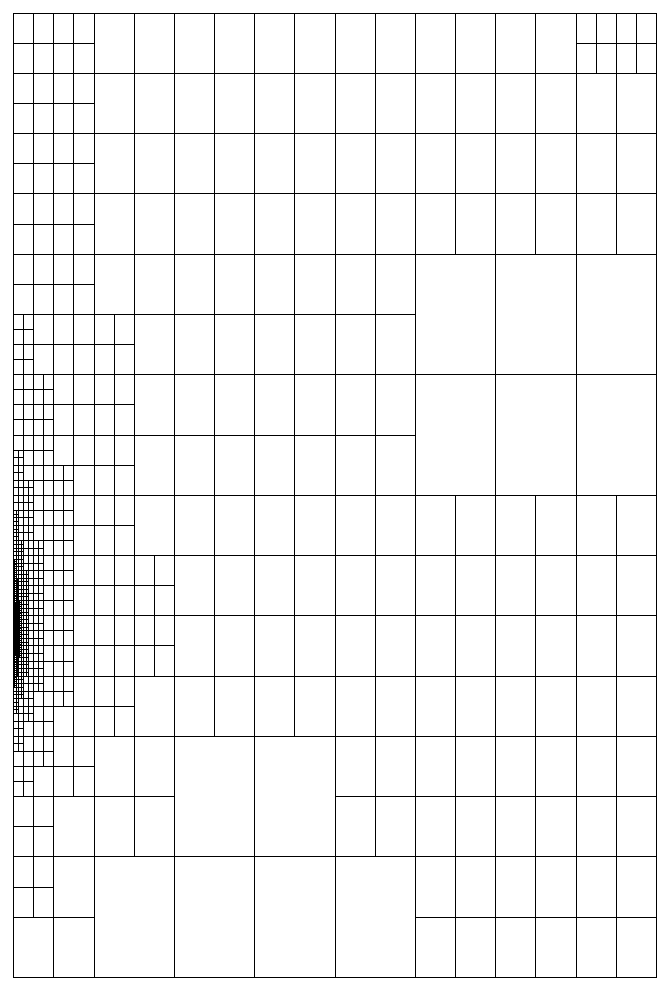}
\caption{An admissible partition of $\{~\tfrac12\le x\le \tfrac58, 0\le h\le \tfrac{3}{16}\}$ into $1303$ subrectangles as generated by $\textsc{Partition}_2$ to verify \eqref{eqn:g_J_1_auto} with $(\beta,c)=(\beta_0,1)$.}
\label{fig:critpartition}
\end{figure}

\subsubsection{Case II: $\frac12\leq x\leq x_0, \frac{11}{16}\le y\le 1$}
\label{sec:J-II}
By monotonicity it suffices to show the stronger conclusion
$G^1_{\frac12}[J](x,y)\ge 0$
or equivalently,
\[g_{J,2}(x,y)=(y-x)^2 +J(y)^2-(2J(\tfrac{x+y}{2})-J(x))^2\ge 0.\]
The quantity $2J(\frac{x+y}{2})-J(x)\ge 0$ is decreasing in $y$ (since $\frac{x+y}2\ge \tfrac12(\tfrac12+\tfrac{11}{16})=0.59375>x_0$, so $J'(\frac{x+y}2)<0$) and also decreasing in $x$, because
the $x$-derivative is
\[ J'(\tfrac{x+y}2)-J'(x) = J''(\xi)\tfrac{y-x}2 < 0 \]
by the mean value theorem.
Therefore, a tight lower bound for $g_{J,2}$ is given by
\[ \underline{g_{J,2}}(\underline{x},\overline{x},\underline{y},\overline{y})=(\underline{y}-\overline{x})^2 +J(\overline{y})^2-(2J(\tfrac{\underline{x}+\underline{y}}{2})-J(\underline{x}))^2. \]
Running $\textsc{Partition}_2(\underline{g_{J,2}},[\frac12,\frac9{16}]\times[\frac{11}{16},1])$ shows that
\begin{equation}\label{eqn:g_J_2_auto}\auto
g_{J,2}(x,y)>10^{-8}
\end{equation}
for $(x,y)$ in this rectangle (and note that $\frac{9}{16}=0.5625>x_0$).

\subsection{\texorpdfstring{Case $Q$: $0\le x\leq y \leq\frac12$}{Case Q}}
\label{sec:caseQ}

\begin{prop}
For $\beta\in \{\tfrac12, \beta_0\}$ and $0\le x\le y\le \tfrac12$:
\[ G^1_{\beta}[\max(L_\beta, Q_\beta)](x,y)\ge 0 \]
\end{prop}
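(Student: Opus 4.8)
The plan is to reduce the claimed inequality $G^1_\beta[\max(L_\beta,Q_\beta)]\ge 0$ to a pair of simpler two-point inequalities, one for $L_\beta$ and one for $Q_\beta$, via the $\max$-stability of Lemma~\ref{lem:maxlemma2}. By Lemma~\ref{lem:LQmax}, on $[0,\tfrac12]$ we have $\max(L_\beta,Q_\beta)=b_\beta$ (cf.\ \eqref{eqn:bbdef}), with $\max(L_\beta,Q_\beta)=L_\beta$ on $[0,\tfrac14]$ and $=Q_\beta$ on $[\tfrac14,\tfrac12]$. The condition $G^1_\beta[B](x,y)\ge 0$ is exactly \eqref{eqn:maxtwofctineq} for the map
\[ \mathcal{L}(x,y,u,v)=\tfrac12\Big(\big((y-x)^{1/\beta}+v^{1/\beta}\big)^\beta+u\Big), \]
which is increasing in $u$ and in $v$. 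So by Lemma~\ref{lem:maxlemma2} with $B_1=L_\beta$, $B_2=Q_\beta$, it suffices to prove (i) $G^1_\beta[L_\beta](x,y)\ge 0$ whenever $0\le x\le y$ and $\tfrac{x+y}2\le\tfrac14$, and (ii) $G^1_\beta[Q_\beta](x,y)\ge 0$ for all $0\le x\le y\le\tfrac12$ (the diagonal point $x=y=\tfrac12$, where $L_\beta=Q_\beta$, being trivial since $G^1_\beta[\cdot](\tfrac12,\tfrac12)=0$).

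For (i) with $\beta=\tfrac12$ this is exactly Bobkov's one-sided two-point inequality \eqref{eqn:Bobkovtwopt} for $L_{1/2}$, i.e.\ \cite[Lemma~2.2]{BIM23}; so the outstanding tasks are (i) for $\beta=\beta_0$ and (ii) for $\beta\in\{\tfrac12,\beta_0\}$. These I would settle by the computer-assisted dyadic partitioning of \S\ref{sec:auto}, following the template of Case~$J$.I in \S\ref{sec:J-I}. Away from the diagonal, $G^1_\beta[L_\beta]$ and $G^1_\beta[Q_\beta]$ admit good explicit tight lower bounds, since $L_\beta$ and $Q_\beta$ are increasing (for $L_\beta$: by \eqref{eqn:DL}, as $e^{-\beta}>\tfrac12$) and concave (Lemmas~\ref{lem:Lprop}, \ref{lem:Qbprop}) on $[0,\tfrac12]$, so one evaluates at the corners of each subrectangle and runs $\textsc{Partition}_2$. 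Near the diagonal $G^1_\beta$ vanishes identically, so I would set $h=y-x$ and use Lemma~\ref{lem:neardiaglowerbd} to write
\[ G^1_\beta[B](x,x+h)=h^{1/\beta}\,g_{B,\beta}(x,h)+\big[B(x)+B(x+h)-2B(x+\tfrac h2)\big], \]
with the bracket $\le 0$ by concavity; Taylor-expanding that bracket about $x$ (which terminates exactly at the cubic term when $B=Q_\beta$, by \eqref{eqn:DDQ}, and for $B=L_\beta$ leaves a remainder controlled by $L_\beta^{(4)}<0$, Lemma~\ref{lem:Lprop}(4), the cubic coefficient coming from \eqref{eqn:DDDL}) and dividing by $h^{1/\beta}$ produces a bounded function whose positivity is then verified by $\textsc{Partition}_2$. (Since $G^1_\beta[cB]\ge c\,G^1_\beta[B]$ for $0<c\le 1$ — because $(y-x)^{1/\beta}+c^{1/\beta}B(y)^{1/\beta}\ge c^{1/\beta}\big((y-x)^{1/\beta}+B(y)^{1/\beta}\big)$ — the case $\beta=\tfrac12$ also yields the rescaled bound $G^1_{1/2}[c_0\max(L_{1/2},Q_{1/2})]\ge 0$ needed for \eqref{eqn:maintwoptonehalf} in this region.)

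The one genuinely delicate point is (i) near the origin, and more broadly near the edge $\{x=0\}$ of the region $\{0\le x\le y,\ \tfrac{x+y}2\le\tfrac14\}$. A short computation gives $G^1_\beta[L_\beta](0,y)=\big(y^{1/\beta}\log_2(2/y)\big)^\beta-2L_\beta(y/2)\equiv 0$ — the subcube equality case — so $G^1_\beta[L_\beta]$ vanishes identically on \emph{two} sides of that region, and all derivatives of $L_\beta$ blow up at $0$, so the naive interval enclosures of $g_{L_\beta,\beta}$ and of the remainder term are infinite there. To handle this I would isolate a small neighbourhood of the origin and replace the naive bounds by ones exploiting $L_\beta(z)\ge z$, so that $h/L_\beta(x+h)$ stays bounded: the error terms then pick up an extra power of $\log\tfrac1{x+h}$ and become negligible, and strict positivity reduces to a sharp comparison of the positive principal term $\beta L_\beta(x+h)^{1-1/\beta}$ against the negative concavity contribution $\tfrac14 L_\beta''(x)h^{2-1/\beta}$, which for $\beta_0$ (where $1/\beta_0<2$) closes with a small positive margin, aided by $L_\beta'''>0$ near $0$ (Lemma~\ref{lem:Lprop}(3)). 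I expect this near-origin bookkeeping to be the main obstacle; everything else is a strictly lighter version of the analysis carried out in \S\ref{sec:J-I}.
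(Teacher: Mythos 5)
Your reduction via Lemma~\ref{lem:maxlemma2} and Lemma~\ref{lem:LQmax} to the two sub-inequalities $G^1_\beta[L_\beta]\ge 0$ on $\{\tfrac{x+y}{2}\le\tfrac14\}$ and $G^1_\beta[Q_\beta]\ge 0$ is exactly the paper's first step, and your rescaling observation $G^1_\beta[cB]\ge c\,G^1_\beta[B]$ (for $0<c\le 1$) is correct and a useful explicit note. But your verification strategy for the $L_\beta$ piece has a genuine gap. You recognize that $G^1_\beta[L_\beta](0,y)\equiv 0$, i.e.\ the function vanishes identically on the entire edge $\{x=0\}$, not merely at a corner. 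This kills the dyadic partitioning method of \S\ref{sec:auto} outright: Fact~\ref{fact:admissiblepartition} requires strict positivity, and no tight lower bound can be bounded away from $0$ near an edge on which the function itself is identically $0$. Your proposed fix (isolating a neighbourhood of the origin, exploiting $L_\beta(z)\ge z$, and comparing the principal term against the second-derivative term) does not resolve this: the equality is exact along all of $\{x=0\}$, including for $y$ bounded away from $0$, so there is no positive margin to be extracted there, no matter how one normalizes. The paper avoids the difficulty entirely by \emph{citing} $G^1_\beta[L_\beta]\ge 0$ on $0\le x\le y\le\tfrac12$ for both values of $\beta$ as already known (\cite[Lemma~2.2]{BIM23}); it does not re-derive it.

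Your treatment of the $Q_\beta$ piece is also too optimistic about pure partitioning. The function $G^1_{1/2}[Q_{1/2}]$ vanishes at the corner $(0,\tfrac12)$ as well as on the diagonal, and after dividing by $h^{1/\beta}$ in the near-diagonal region the normalized lower bound of Lemma~\ref{lem:neardiaglowerbd} still vanishes at $(h,y)=(0,\tfrac12)$ when $\beta=\tfrac12$ (since $\alpha_0+2\alpha_1=1$). The paper therefore does not run $\textsc{Partition}_2$ directly there: for $\beta=\tfrac12$ it first observes that $g_{Q,1,1/2}(\cdot,y)$ is concave in $h$ (because $Q_{1/2}$ is a cubic, the near-diagonal bound is exact up to the explicit $E_1$ term) and reduces to evaluating at $h=0$ and $h=\tfrac14$; and far from the diagonal it uses a logarithmic-derivative argument (Lemma~\ref{lem:LQIII}) to show $h\mapsto \partial_h G_{Q,\beta}(h,y)$ changes sign at most once, reducing to $h=\tfrac14$ and $h=y$, the latter handled by a hands-on monotonicity argument on a fractional polynomial $f_{\beta,0}$. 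None of this is anticipated in your sketch, and running $\textsc{Partition}_2$ on the whole region $\{0\le x\le y\le\tfrac12,\ \tfrac{x+y}2\ge\tfrac14\}$ would fail near $(0,\tfrac12)$ for the same strict-positivity reason as above. So while your structural reduction is right, the computational part of your plan cannot close as written.
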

\begin{rem}
The argument is not sensitive to the exact value of $\beta_0$: the conclusion holds for all $\beta\in [\tfrac12,1]$,
but we don't pursue this here.
\end{rem}
It is known that
\[ G^1_\beta[L_\beta](x,y)\ge 0 \]
holds for all $0\le x\le y\le \tfrac12$ (see \cite[Lemma 2.2]{BIM23}).
By Lemma \ref{lem:maxlemma2} and Lemma \ref{lem:LQmax} we therefore have the claim on the triangle I in Figure~\ref{fig:lq} and it now suffices to show
\begin{equation}\label{eqn:GLQge0}
G^1_\beta[Q_\beta](x,y)\ge 0
\end{equation}
for all $0\le x\le y\le \tfrac12$ with $\frac{x+y}{2}\ge \tfrac14$.
Observe that $G_\beta^1[Q_\beta](x,y)=0$ if $x=y$ and if $(x,y)=(0,\tfrac12)$.
This leads to distinguishing two further cases (regions II and III in Figure \ref{fig:lq}).

\begin{figure}[h]
\centering
\begin{tikzpicture}[scale=8]
\draw (0,0)--(1/2,1/2)--(0,1/2)--cycle;
\draw (0,1/2)--(1/4,1/4);
\draw (1/8,3/8)--(1/4,1/2);
\node at (0,0-.05) {\tiny $0$};
\node at (1/4,-.05) {\footnotesize $\tfrac{1}{4}$};
\node at (1/2,-0.05) {\footnotesize $\tfrac{1}{2}$};
\node at (0-.03,0) {\tiny $0$};
\node at (0-.03,1/2) {\footnotesize $\tfrac{1}{2}$};
\node at (0-.03,1/4) {\footnotesize $\tfrac{1}{4}$};
\draw[opacity=.1] (0,0)--(1/2,0)--(1/2,1/2);
\draw[opacity=.1] (1/4,0)--(1/4,1/2);
\draw[opacity=.1] (0,1/4)--(1/2,1/4);
\node at (1/8,1/4) {\tiny I};
\node at (1/4,3/8) {\tiny \hyperref[sec:Q-II]{II}};
\node at (1/8,3.6/8) {\tiny \hyperref[sec:Q-III]{III}};
\end{tikzpicture}
\caption{Case $Q$}
\label{fig:lq}
\end{figure}

\subsubsection{Near diagonal: $0\le y-x\le \tfrac14$}
\label{sec:Q-II}
This is the quadrangle II in Figure \ref{fig:lq}.
Let $h=y-x$.
We will prove the desired bound in the larger region
\[ 0\le h\le \tfrac14\le y\le \tfrac12. \]
By Lemma \ref{lem:neardiaglowerbd},
\[ G^1_\beta[Q_\beta](x,y)\ge h^{\frac1\beta} g_{Q,1,\beta}(h,y),\]
where
\[ g_{Q,1,\beta}(h,y)= \beta Q_\beta(y)^{1-\frac1\beta} - (2Q_\beta(y-\tfrac{h}2)-Q_\beta(y-h)-Q_\beta(y))h^{-\frac1\beta} - \tfrac12 \beta (1-\beta) Q_\beta(y)^{1-\frac2\beta} h^{\frac1\beta}. \]
Calculate
\[ 2Q_\beta(y-\tfrac{h}2)-Q_\beta(y-h)-Q_\beta(y) = h^2 (\alpha_0 - 2\alpha_1 h + 4\alpha_1 y) \ge 0, \]
where $\alpha_0,\alpha_1$ are as in \eqref{eqn:alpha01def}.
Thus
\[ g_{Q,1,\beta}(h,y) = \beta Q_\beta(y)^{1-\frac1\beta} - (\alpha_0-2\alpha_1 h+4\alpha_1 y)h^{2-\frac1\beta} - \tfrac12 \beta (1-\beta) Q_\beta(y)^{1-\frac2\beta} h^{\frac1\beta} \]
If $\beta>\tfrac12$, then $g_{Q,1,\beta}(x,y)>0$ and this can be proved automatically:
Since $Q_\beta$ is monotone increasing on $[0,\tfrac12]$ (see Lemma \ref{lem:Qbprop}), we obtain the tight lower bound
\[ \underline{g_{Q,1,\beta}}(\underline{h},\overline{h},\underline{y},\overline{y}) = \beta Q_\beta(\overline{y})^{1-\frac1\beta} - (\alpha_0 - 2\alpha_1 \underline{h} + 4\alpha_1 \overline{y})\overline{h}^{2-\frac1\beta} \]
\[- \tfrac12 \beta (1-\beta)Q_\beta(\underline{y})^{1-\frac2\beta} \overline{h}^{\frac1\beta}. \]
Running $\textsc{Partition}_2(\underline{g_{Q,1,\beta_0}},[0,\tfrac14]\times[\tfrac14,\tfrac12])$ gives
\begin{equation}\label{eqn:g_Q_1_auto}\auto
g_{Q,1,\beta_0}(h,y)>0.001
\end{equation}
and thus finishes the proof of \eqref{eqn:GLQge0} near the diagonal for $\beta=\beta_0$.
The case $\beta=\frac12$ needs
a little more work because
$g_{Q,1,\frac12}(0,\tfrac12)=0$.
We have
\[ g_{Q,1,\frac12}(h,y) = \tfrac12 Q_{\frac12}(y)^{-1} - (\alpha_0+4\alpha_1 y) + 2\alpha_1 h - \tfrac18 Q_{\frac12}(y)^{-3} h^2\]
Observe that
\[\partial_h^2 g_{Q,1,\frac12}(h,y)=-\tfrac14 Q_{\frac12}(y)^{-3}<0,\]
so the function is concave in $h$ and it suffices to evaluate at $h=0$ and $h=\tfrac14$:
\begin{enumerate}
\item $g_{Q,1,\frac12}(0,y)\ge 0$ for all $y\in[\tfrac14,\tfrac12]$.
\begin{proof}
We need to show
\[ g_{Q,1,\frac12}(0,y)=\tfrac12 Q_{\frac12}(y)^{-1} - (\alpha_0+4\alpha_1 y) \ge 0\]
Multiplying by $2Q_{\frac12}(y)>0$ this is
\[ 1 - 2(\alpha_0+4\alpha_1 y)Q_{\frac12}(y) \ge 0 \]
The left-hand side is a polynomial of degree $4$ in $y$.
Changing variables $t=\tfrac12-y$, plugging in the definition of $Q_{\frac12}(\tfrac12-t)$ and simplifying this becomes
\[ 1-\tfrac13 (1-4t^2)(1-4\alpha_1 t)(3-4\alpha_1 t) \ge 0. \]
The left-hand side equals zero at $t=0$ and the three factors do not change sign on the interval $t\in [0,\tfrac14]$, so the left-hand side is increasing in $t\in [0,\tfrac14]$, which shows that $g_{Q,1,\frac12}(0,y)\ge 0$ for $y\in [\tfrac14, \tfrac12]$.
\end{proof}
\item $g_{Q,1,\frac12}(\tfrac14,y)>0$ for all $y\in [\tfrac14,\tfrac12]$
\begin{proof}
Evaluate
\begin{equation}\label{eqn:g_Q_1_y1_4_auto}\auto
\underline{g_{Q,1,\frac12}}(\tfrac14,\tfrac14,\tfrac14,\tfrac38) > 0.01\;\;\text{and}\;\;
\underline{g_{Q,1,\frac12}}(\tfrac14,\tfrac14,\tfrac38,\tfrac12)> 0.001.
\end{equation}
\end{proof}
\end{enumerate}

\subsubsection{Far from diagonal: $\tfrac14\le y-x\le \tfrac12$}
\label{sec:Q-III}
This is the triangle III in Figure \ref{fig:lq}.
Again let $h=y-x$.
The idea is to reduce to a fractional polynomial in $h$ using logarithmic derivatives.
The desired inequality \eqref{eqn:GLQge0} can be written in equivalent form as
\[ G_{Q,\beta}(h,y)=\log(h^{\frac1\beta} + Q_\beta(y)^{\frac1\beta}) - \tfrac1\beta \log(2 Q_\beta(y - \tfrac{h}{2}) - Q_\beta(y - h))\ge 0. \]
\begin{lem}\label{lem:LQIII}
For $\beta\in \{\beta_0,\frac12\}$ and $y\in[\frac14,\frac12]$, the function
\[h\mapsto \partial_h G_{Q,\beta}(h,y)\]
vanishes at most once on the interval $[\tfrac14, \tfrac12]$. Also, $\partial_h G_{Q,\beta}(\tfrac14,y)>0$.
\end{lem}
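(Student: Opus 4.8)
The plan is, for each fixed $y\in[\tfrac14,\tfrac12]$, to reduce the claim to a sign analysis of a single fractional polynomial in $h$ and then to pin down its monotonicity pattern. First I would record, from the computation already carried out in \S\ref{sec:Q-II}, that
\[ p_\beta(h,y) := 2Q_\beta(y-\tfrac h2)-Q_\beta(y-h) = Q_\beta(y)+h^2\big(\alpha_0+4\alpha_1 y-2\alpha_1 h\big), \]
so $\partial_h p_\beta(h,y)=h\,(2\alpha_0+8\alpha_1 y-6\alpha_1 h)$, and note that $p_\beta>0$ on the region of interest (the bracket is $\ge\alpha_0-\alpha_1>0$ for $h\le\tfrac12$, and $Q_\beta(y)>0$). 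Writing $q=Q_\beta(y)$ and $\rho=\alpha_0+4\alpha_1 y$, differentiating $G_{Q,\beta}(h,y)=\log(h^{1/\beta}+q^{1/\beta})-\tfrac1\beta\log p_\beta(h,y)$ and clearing the positive factors $h^{1/\beta}+q^{1/\beta}$, $p_\beta$, and $h$ gives
\[ \operatorname{sign}\partial_h G_{Q,\beta}(h,y)=\operatorname{sign}\Psi_\beta(h,y),\qquad h\in(0,\tfrac12], \]
where
\[ \Psi_\beta(h,y)= q\,h^{\frac1\beta-2}-\rho\,h^{\frac1\beta}+4\alpha_1 h^{\frac1\beta+1}+6\alpha_1 q^{\frac1\beta}h-2\rho q^{\frac1\beta}. \]
So it suffices to show $\Psi_\beta(\tfrac14,y)>0$ and that $h\mapsto\Psi_\beta(h,y)$ has at most one zero on $[\tfrac14,\tfrac12]$.

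For the first statement I would substitute $h=\tfrac14$ and check positivity for $y\in[\tfrac14,\tfrac12]$. When $\beta=\tfrac12$, $\Psi_{1/2}(\tfrac14,y)$ is a concave quadratic in $q=Q_{1/2}(y)\in[2^{-3/2},\tfrac12]$ once one inserts $\rho\le\alpha_0+2\alpha_1=1$, so it is enough to check positivity at the two extreme values of $q$; when $\beta=\beta_0$ it is verified by a run of $\textsc{Partition}_1$ in $y$, with a tight lower bound built from the monotonicity of $Q_{\beta_0}$ on $[0,\tfrac12]$ (Lemma~\ref{lem:Qbprop}).

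For the "at most once" statement I would study the shape of $\Psi_\beta(\cdot,y)$ on $[\tfrac14,\tfrac12]$ through its first two derivatives. One computes $\partial_h^2\Psi_\beta(h,y)=h^{\frac1\beta-4}\psi_\beta(h,y)$ with
\[ \psi_\beta(h,y)=q(\tfrac1\beta-2)(\tfrac1\beta-3)-\rho\tfrac1\beta(\tfrac1\beta-1)h^2+4\alpha_1(\tfrac1\beta+1)\tfrac1\beta h^3, \]
and $\psi_\beta(\cdot,y)$ is, as a function of $h>0$, strictly decreasing then strictly increasing, since its only positive critical point $h=\tfrac{\rho(1/\beta-1)}{6\alpha_1(1/\beta+1)}$ is a minimum (the second derivative there equals $2\tfrac1\beta\rho(\tfrac1\beta-1)>0$). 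Hence, once $\partial_h^2\Psi_\beta(\tfrac14,y)<0$ is established (the closed-form bound $6\alpha_1-2\rho\le 4\alpha_1-2\alpha_0=22-16\sqrt2<0$ for $\beta=\tfrac12$, a $\textsc{Partition}_1$ check for $\beta_0$), $\psi_\beta(\cdot,y)$ --- and therefore $\partial_h^2\Psi_\beta(\cdot,y)$ --- changes sign at most once on $[\tfrac14,\tfrac12]$, from $-$ to $+$; i.e.\ $\partial_h\Psi_\beta(\cdot,y)$ is decreasing then increasing (or monotone) there. Then I would check $\partial_h\Psi_\beta(\tfrac12,y)<0$ for $y\in[\tfrac14,\tfrac12]$ (for $\beta=\tfrac12$ this is $3\alpha_1(1+2q^2)-\rho\le\tfrac72\alpha_1-\alpha_0=\tfrac{31}2-11\sqrt2<0$ using $q\le\tfrac12$ and $\rho\ge\alpha_0+\alpha_1$; a $\textsc{Partition}_1$ check for $\beta_0$). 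Combined with the valley shape, $\partial_h\Psi_\beta\le 0$ on the final increasing stretch, so $\{h\in[\tfrac14,\tfrac12]:\partial_h\Psi_\beta(h,y)\ge 0\}$ is an initial interval; thus $\Psi_\beta(\cdot,y)$ is increasing then decreasing on $[\tfrac14,\tfrac12]$, and since $\Psi_\beta(\tfrac14,y)>0$ it has at most one zero there. By the sign equivalence this gives both assertions of the lemma, including $\partial_h G_{Q,\beta}(\tfrac14,y)>0$.

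The routine content is the six explicit sign verifications --- three inequalities in $y$ for each exponent --- of which only the three for $\beta=\tfrac12$ need doing by hand (closed-form inequalities in $\alpha_0=4\sqrt2-5$ and $\alpha_1=3-2\sqrt2$), the three for $\beta_0$ being routine interval-arithmetic runs. I expect the only genuine obstacle to be bookkeeping: for $\beta=\beta_0$ the exponent $1/\beta_0$ is not an integer, so $\Psi_{\beta_0}$ and its derivatives are honest fractional polynomials rather than cubics, and one must confirm that the small perturbation away from the comfortably slack $\beta=\tfrac12$ algebra does not spoil any of the three endpoint inequalities --- precisely what $\textsc{Partition}_1$ takes care of --- and that the structural facts used ($p_\beta>0$ and $\psi_\beta$ having a single positive critical point, a minimum) persist, which they do because they only use $0<\beta<1$ and $\alpha_0,\alpha_1>0$.
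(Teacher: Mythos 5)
Your proposal is correct but genuinely different in structure from the paper's proof.  After the common reduction to a fractional polynomial (your $\Psi_\beta(h,y)=h^{1/\beta-2}p_{y,\beta}(h)$ is just the paper's $p_{y,\beta}$ multiplied by a positive power of $h$, so their sign sets agree), the paper verifies a single, stronger, two-dimensional inequality: $-\partial_h p_{y,\beta}(h)>0.01$ on $[\tfrac14,\tfrac12]^2$, i.e.\ that $p_{y,\beta}$ is strictly decreasing in $h$ for every $y$.  Strict decrease immediately gives both that the derivative of $G_{Q,\beta}$ vanishes at most once and that any sign change is from $+$ to $-$, which is all the subsequent argument actually needs.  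You instead establish a weaker structural conclusion — $\Psi_\beta(\cdot,y)$ is unimodal (increasing then decreasing, possibly degenerately) — by analysing $\partial_h^2\Psi_\beta$ through the cubic $\psi_\beta$, noting it has a unique positive critical point which is a minimum, and checking three one-dimensional endpoint inequalities per exponent: $\Psi_\beta(\tfrac14,y)>0$, $\psi_\beta(\tfrac14,y)<0$, and $\partial_h\Psi_\beta(\tfrac12,y)<0$.  The sign bookkeeping in the ``valley-shape'' argument is sound, and I verified your closed-form computations for $\beta=\tfrac12$ ($6\alpha_1-2\rho\le 22-16\sqrt2<0$ and $\tfrac{31}{2}-11\sqrt2<0$; the latter is tight, about $-0.06$, but that is acceptable since $\beta_0$ is close to $\tfrac12$ and the $\beta_0$ cases are delegated to \textsc{Partition}$_1$).

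What each approach buys: the paper's route is operationally simpler — one bivariate \textsc{Partition}$_2$ call per exponent and no shape analysis — and it directly yields the monotonicity property that the application ``minimum at an endpoint'' needs.  Your route is more work (structural analysis of $\psi_\beta$ plus three univariate checks), but it has two genuine advantages.  First, it produces closed-form, computer-free arguments at $\beta=\tfrac12$, where the paper relies entirely on interval arithmetic.  Second — and notably — it actually proves the second assertion of the lemma, $\partial_h G_{Q,\beta}(\tfrac14,y)>0$, by verifying $\Psi_\beta(\tfrac14,y)>0$; the paper's written proof establishes only the ``at most one zero'' part (the later text does not in fact need the second assertion, since strict decrease of $p_{y,\beta}$ already implies the ``minimum at an endpoint'' conclusion, but the lemma as stated is not fully covered by the paper's argument, whereas yours covers it).  One small caution: your conclusion ``increasing then decreasing'' is weaker than the paper's ``strictly decreasing'' (the paper's numerics show the increasing phase is in fact empty on $[\tfrac14,\tfrac12]$), but it is still sufficient for the lemma once combined with $\Psi_\beta(\tfrac14,y)>0$.
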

\begin{rem}
This is slightly more than we need: it would be enough to show this for $y\in [\frac38,\frac12]$ and $h\in [\tfrac14,y]$.
\end{rem}
The proof of Lemma \ref{lem:LQIII} is postponed to the end of this section.
The lemma implies that
$h\mapsto G_{Q,\beta}(h,y)$ must achieve its minimum on $[\frac14,y]$ at $h=\frac14$ or at $h=y$.
In the near diagonal case we have already proved that $G_{Q,\beta}(\tfrac14,y)\ge 0$ (this is the line segment with $y-x=\tfrac14$ in Figure \ref{fig:lq}).
Thus it remains to show
\[ G_{Q,\beta}(y,y)\ge 0, \]
i.e.
\[ \log(y^{\frac1\beta} + Q_\beta(y)^{\frac1\beta}) - \tfrac1\beta \log(2 Q_\beta(\tfrac{y}{2}))\geq 0.\]
Equivalently, we need to show
\[y^{\frac1\beta} + Q_\beta(y)^{\frac1\beta} - (2 Q_\beta(\tfrac{y}{2}))^{\tfrac1\beta }\geq 0.\]
Multiplying by $y^{-\tfrac{1}{\beta}}$ and defining
\[f_\beta(y)=1+R_\beta(y)^{\frac{1}{\beta}}-R_\beta(\tfrac{y}{2})^{\frac{1}{\beta}}\]
where
$R_\beta(y) = y^{-1}Q_\beta(y) = \tfrac23 (1-y)(2^{\beta+2}-3+4(3-2^{\beta+1}) y),$
we see that it is now equivalent to show
$f_\beta(y) \geq 0$
for all $y\in [\tfrac{1}{4},\tfrac{1}{2}]$.

We evaluate $f_\beta(\tfrac{1}{2})=0$. We claim that $f_\beta' \leq 0$ on $[\tfrac{1}{4},\tfrac{1}{2}]$, which will in turn imply $f_\beta\geq 0$ on $[\tfrac{1}{4},\tfrac{1}{2}]$.
To see the claim, we calculate
\[f_\beta'(y) = \tfrac{1}{\beta}R_\beta(y)^{\frac{1}{\beta}-1}R_\beta'(y) - \tfrac{1}{2\beta}R_\beta(\tfrac{y}{2})^{\frac{1}{\beta}-1}R_\beta'(\tfrac{y}{2})\]
To show $f_\beta'(y)\leq 0$ it is equivalent to show that $\beta y f_\beta'(y)\leq 0$.
Setting $f_{\beta,0}(y)=yR_\beta(y)^{\frac{1}{\beta}-1}R_\beta'(y)$, it holds
\[\beta y f_\beta'(y) = f_{\beta,0}(y)-f_{\beta,0}(y/2).\]
Thus, it suffices to show that $f_{\beta,0}'\leq 0$.
Let us further denote
\[f_{\beta,1}(y)=R_\beta(y)^{\frac{1}{\beta}-1},\]
\[f_{\beta,2}(y)=yR_\beta'(y),\]
so that $f_{\beta,0}(y)=f_{\beta,1}(y)f_{\beta,2}(y)$.
Since $R_\beta\geq 0$ and $\,R_\beta'\leq 0$ on $[\tfrac{1}{4},\tfrac{1}{2}]$, also $f_{\beta,1}\geq 0$ and $f_{\beta,2}\leq 0$ on this interval.

We calculate
\[f_{\beta,1}'(y) = (\tfrac{1}{\beta}-1)R_\beta(y)^{\frac{1}{\beta}-2}R_\beta'(y),\]
\[f_{\beta,1}''(y) = (\tfrac{1}{\beta}-1)(\tfrac{1}{\beta}-2)R_\beta(y)^{\frac{1}{\beta}-3}(R_\beta'(y))^2+(\tfrac{1}{\beta}-1)R_\beta(y)^{\frac{1}{\beta}-2}R_\beta''(y)\]
\[f_{\beta,1}'''(y) = (\tfrac{1}{\beta}-1)(\tfrac{1}{\beta}-2)\big((\tfrac{1}{\beta}-3)R_\beta(y)^{\frac{1}{\beta}-4}(R_\beta'(y))^3 + 3 R_\beta(y)^{\frac{1}{\beta}-3}R_\beta'(y)R_\beta''(y)\big)\]
Since $\beta\in \{\beta_0,\tfrac{1}{2}\}$, one has $R_\beta''\leq 0$ and $\tfrac{1}{\beta}>1,\tfrac{1}{\beta}\leq 2,\tfrac{1}{\beta}<3$, which in turn implies $f_{\beta,1}'\leq 0$, $f_{\beta,1}''\leq 0$, and $f_{\beta,1}'''\leq 0$. To calculate the third derivative we also used that $R_\beta'''=0$.

Next we compute
\[f_{\beta,2}'(y)=R_\beta'(y)+yR_\beta''(y),\]
\[f_{\beta,2}''(y) = 2R_\beta''(y) +yR_\beta'''(y) = 2R_\beta''(y) \]
Thus, $f_{\beta,2}'\leq 0$ and $f_{\beta,2}''\leq 0$.

Using $f_{\beta,2}'''=0$ we calculate
\[f_{\beta,0}'''(y) = f_{\beta,1}'''(y)f_{\beta,2}(y) + 3f_{\beta,1}''(y)f_{\beta,2}'(y) + 3f_{\beta,1}'(y)f_{\beta,2}''(y)\]
The calculations above show that $f_{\beta,0}'''\geq 0$. We evaluate
\begin{equation}
\label{eqn:Q_fb1_auto}
f_{\beta,0}''(\tfrac{1}{4}) > 3.2,
\end{equation} which implies $f_{\beta,0}''>0$. We also evaluate
\begin{equation}
\label{eqn:Q_fb2_auto}
f_{\beta,0}'(\tfrac{1}{2})< -0.6,
\end{equation}
which then shows $f_{\beta,0}'<0$ on $[\tfrac{1}{4},\tfrac{1}{2}]$, as desired.

\begin{proof}[Proof of Lemma \ref{lem:LQIII}]
Here $t,y\in [\frac14,\frac12]$.
The function $\beta\partial_h G_{Q,\beta}(h,y)$ is given by
\[ \frac{h^{\frac1\beta-1}}{h^{\frac1\beta}+Q_\beta(y)^{\frac1\beta}} - \frac{Q_\beta'(y-h)-Q_\beta'(y-\frac{h}2)}{2Q_\beta(y-\frac{h}2)-Q_\beta(y-h)} \]
Observe that $2Q_\beta(y-\frac{h}2)-Q_\beta(y-h)>0$ by strict concavity of $Q_\beta$ (see Lemma \ref{lem:Qbprop}),
so we may multiply by the common denominator $(h^{\frac1\beta}+Q_\beta(y)^{\frac1\beta})(2Q_\beta(y-\frac{h}2)-Q_\beta(y-h))$
to arrive at the quantity
\begin{equation}\label{eqn:penultLQIII} h^{\frac1\beta-1}(2Q_\beta(y-\tfrac{h}2)-Q_\beta(y-h))
\end{equation}
\[-(Q_\beta'(y-h)-Q_\beta'(y-\tfrac{h}2))(h^{\frac1\beta}+Q_\beta(y)^{\frac1\beta}). \]
This is a fractional polynomial in $h$. To see this, calculate
\[ 2Q_\beta(y-\tfrac{h}2)-Q_\beta(y-h)
=-2\alpha_1 h^3+(\alpha_0+4\alpha_1 y)h^2 + Q_\beta(y)
\]
\[ Q_\beta'(y-h)-Q_\beta'(y-\tfrac{h}2)=-6\alpha_1 h^2+(2\alpha_0+8\alpha_1 y)h, \]
where $\alpha_0,\alpha_1$ are as in \eqref{eqn:alpha01def}.
Thus \eqref{eqn:penultLQIII} equals
$h^{\frac1\beta-1} p_{y,\beta}(h)$, where
\[ p_{y,\beta}(h) = 4\alpha_1 h^{3} - (\alpha_0 + 4\alpha_1 y)h^{2} + 6\alpha_1 Q_\beta(y)^{\frac1\beta} h^{3-\frac1\beta}\]\[ - (2\alpha_0+8\alpha_1 y)Q_\beta(y)^{\frac1\beta} h^{2-\frac1\beta} + Q_\beta(y).\]
It will now suffice to show that $p_{y,\beta}$ is strictly decreasing and thus has at most one zero.
To do this we compute the derivative:
\[ g_{Q,2,\beta}(h,y) = -\partial_h p_{y,\beta}(h) = -a_2 h^{2} + a_{1,y} h - a_{0^+,y} h^{2-\frac1\beta} + a_{-1^+,h} h^{1-\frac1\beta}
\]
where the coefficients can be read off from the definition of $p_{y,\beta}(h)$.
A tight lower bound of $g_{Q,2,\beta}$ is given by
\[ \underline{g_{Q,2,\beta}}(\underline{h},\overline{h},\underline{y},\overline{y}) = -12\alpha_1 \overline{h}^{2} + (2\alpha_0 + 8\alpha_1 \underline{y})\underline{h} - 6(3-\tfrac1\beta)\alpha_1 Q_\beta(\overline{y})^{\frac1\beta} \overline{h}^{2-\frac1\beta} \]
\[ + (2-\tfrac1\beta)(2\alpha_0+8\alpha_1 \underline{y})Q_\beta(\underline{y})^{\frac1\beta} \overline{h}^{-\frac1\beta+1}. \]
Finally, running $\textsc{Partition}_2(\underline{g_{Q,2,\beta}}, [\tfrac14,\tfrac12]^2)$ for $\beta\in\{\frac12,\beta_0\}$ shows that
\begin{equation}\label{eqn:g_Q_2_auto}\auto
g_{Q,2,\beta}(h,y)>0.01
\end{equation}
for all $(h,y)\in [\tfrac14,\tfrac12]^2$.
\end{proof}

\subsection{\texorpdfstring{Case $LJQ$: $0\leq x \leq \tfrac{1}{4},\,\tfrac{1}{2}\leq y \leq 1,x+y\leq 1$.}{Case LJQ}}
\label{sec:caseLJQ}
We distinguish further subcases.
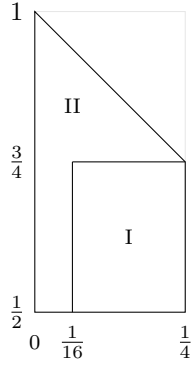
\begin{figure}[htb]
\centering
\begin{tikzpicture}[scale=8]
\draw (0,1/2)--(1/4,1/2)--(1/4,3/4)--(0,1)--cycle;
\draw (1/16,3/4)--(1/4,3/4);
\draw (1/16,1/2)--(1/16,3/4);
\node at (0,1/2-.05) {\tiny $0$};
\node at (1/16,1/2-.05) {\footnotesize $\tfrac{1}{16}$};
\node at (1/4,1/2-.05) {\footnotesize $\tfrac{1}{4}$};
\node at (0-.03,1/2) {\footnotesize $\tfrac{1}{2}$};
\node at (0-.03,3/4) {\footnotesize $\tfrac{3}{4}$};
\node at (0-.03,1) {\footnotesize $1$};
\draw[opacity=.1] (1/4,1/2)--(1/4,1)--(0,1);
\node at (5/32,2.5/4) {\tiny \hyperref[sec:LJQ-I]{I}};
\node at (1/16,13.5/16) {\tiny \hyperref[sec:LJQ-II]{II}};
\end{tikzpicture}
\caption{Case $LJQ$}
\label{fig:ljq}
\end{figure}

\subsubsection{Case I: $\tfrac{1}{16}\leq x\leq \tfrac{1}{4}, \tfrac{1}{2}\leq y\leq \tfrac34$}
\label{sec:LJQ-I}
This is the rectangle I in Figure \ref{fig:ljq}. Here we have
$G_\beta[b_\beta](x,y)>0$ for all $\beta\in[\frac12,1]$, but we will only prove it for $\beta\in\{\frac12,\beta_0\}$.
A naive tight lower bound for $G_\beta[b_\beta](x,y)$ suffices:
\[ \underline{g_{LJQ,1,\beta}}(\underline{x},\overline{x},\underline{y},\overline{y})=
\max(((\underline{y}-\overline{x})^{\frac1\beta}+\underline{J}(\underline{y},\overline{y})^{\frac1\beta})^\beta, \underline{y}-\overline{x}+(2^\beta-1) \underline{J}(\underline{y},\overline{y})) \]
\[ + L(\underline{x},\beta) - 2 Q_\beta(\tfrac{\overline{x}+\overline{y}}{2}).
\]
Running $\textsc{Partition}_2(\underline{g_{LJQ,1,1/2}},[\frac1{16},\frac14]\times[\frac12,\frac34])$ shows
\begin{equation}\label{eqn:g_LJQ_1_auto}\auto
G_\beta[b_\beta](x,y)>10^{-7}
\end{equation}
for $\beta\in\{\beta_0,\frac12\}$ and $(x,y)\in [\frac1{16},\frac14]\times[\frac12,\frac34]$.

\subsubsection{Case II}
\label{sec:LJQ-II}
This is region II in Figure \ref{fig:ljq} and covers the remainder of Case $LJQ$.

\begin{prop}\label{prop:caseLJQ} Let $\beta\in[\frac12,1]$, $x\in [0,\frac14]$ and $y\in [\frac12,1]$.
If in addition $x\le \frac1{16}$ or $y\ge \frac34$ holds, then
\[ \label{caseljq1} G_{LJQ,\beta}(x,y)=y-x + (2^\beta-1) J(y) +L_\beta(x)-2Q_\beta(\tfrac{x+y}{2})\geq 0.\]
\end{prop}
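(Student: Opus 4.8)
The plan is to use the second branch $G^2$ of the two-point functional, which only involves linear behavior in $y-x$ and hence is much less delicate than $G^1$. Write $m=\frac{x+y}{2}$, so $m\in[\frac14,\frac58]$ and $b_\beta(m)$ is either $Q_\beta(m)$ (if $m\le\frac12$) or $J(m)$ (if $m\ge\frac12$). The quantity $G_{LJQ,\beta}(x,y)$ to be bounded from below is, in the notation of \eqref{eqn:Gdef}, exactly $G^2_\beta[b_\beta](x,y)$ once one notes $b_\beta(x)=L_\beta(x)$ for $x\in[0,\frac14]$ and $b_\beta(y)=J(y)$ for $y\in[\frac12,1]$. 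By Lemma \ref{lem:GLvsGR}, on the region where $y-x\ge J(y)$ we have $G_\beta[b_\beta](x,y)=G^2_\beta[b_\beta](x,y)=G_{LJQ,\beta}(x,y)$, so nonnegativity there is already implied by the other cases; the genuine content is to show $G^2_\beta[b_\beta]\ge 0$ directly on all of region II. Since $\|J\|_\infty=\frac12$ and on region II either $x\le\frac1{16}$ (so $y-x\ge\frac12-\frac1{16}=\frac{7}{16}$, close to $\frac12$) or $y\ge\frac34$ (so $J(y)$ is substantially below $\frac12$), the inequality $y-x\ge J(y)$ in fact nearly always holds and the problem is essentially routine; the point of a separate argument is to cover the thin sliver where it might fail.

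The key steps are as follows. First I would observe that $y\mapsto y+(2^\beta-1)J(y)$ is increasing on $[\frac12,1]$ when $\beta\ge\beta_0$: its derivative is $1+(2^\beta-1)J'(y)$, and since $|J'|$ is bounded (one computes $\|J'\|_\infty=|J'(1)|$, a finite constant comparable to $w_0^{-1}\sqrt{2}$ times the blow-up rate of $I'$ at the endpoint — but in fact on $[\frac12,1]$ one has the crude bound $|J'|\le C$ for an explicit $C$, and $(2^\beta-1)\le \sqrt2-1$), this derivative stays positive; this reduces matters, for fixed $x$, to the smallest admissible $y$. Second, I would fix the geometry of region II: it is the union of the strip $\{0\le x\le\frac1{16},\ \frac12\le y\le1,\ x+y\le1\}$ and the strip $\{0\le x\le\frac14,\ \frac34\le y\le1,\ x+y\le1\}$ minus rectangle I. Third — and this is the mechanical heart — I would introduce an explicit tight lower bound $\underline{g_{LJQ,2,\beta}}(\underline x,\overline x,\underline y,\overline y)$ for $G_{LJQ,\beta}$ on subrectangles, using monotonicity of each piece: $L_\beta$ is increasing on $[0,\frac14]$ (use \eqref{eqn:DL}), $Q_\beta$ is increasing on $[0,\frac12]$ and $J$ is increasing on $[\frac12,x_0]$, decreasing on $[x_0,1]$ (so its enclosure is \eqref{eqn:Jenclosure}), and $(2^\beta-1)J(y)$ is bounded below using the enclosure $\underline J$; the midpoint term $-2b_\beta(m)$ is bounded below by $-2\overline{b_\beta}(\tfrac{\underline x+\underline y}{2},\tfrac{\overline x+\overline y}{2})$, where for $m$ possibly straddling $\frac12$ one takes the max of the $Q_\beta$- and $J$-enclosures. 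Fourth, run $\textsc{Partition}_2(\underline{g_{LJQ,2,\beta}},R)$ over the (two or three) rectangles whose union is region II, for $\beta\in\{\frac12,\beta_0\}$, obtaining a strictly positive admissible partition and hence $G_{LJQ,\beta}>0$ there; by monotonicity of $G_\beta$ in $\beta$ (Lemma \ref{lem:holder}-type reasoning, or directly the remark after Lemma \ref{lem:GLvsGR}) this upgrades to all $\beta\in[\frac12,1]$.

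The main obstacle I anticipate is not analytic depth but the bookkeeping at the interface $m=\frac12$, where $b_\beta$ switches from $Q_\beta$ to $J$: the tight lower bound for $-2b_\beta(m)$ must be valid on rectangles whose midpoint-image $[\tfrac{\underline x+\underline y}{2},\tfrac{\overline x+\overline y}{2}]$ contains $\frac12$, and one must check that $Q_\beta(\frac12)=J(\frac12)=\frac12$ and that both pieces are $C^1$-comparable there (indeed $Q_\beta'(\tfrac12)$ versus $J'(\tfrac12)$) so that the enclosure $\overline{b_\beta}=\max(\overline{Q_\beta},\overline J)$ is not wastefully lossy — otherwise the partition may fail to certify positivity near that seam. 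A secondary subtlety is that $x_0\approx0.552$ is not a dyadic rational, so in the strip $y\ge\frac34$ (where $y$ can be as small as $\frac34>x_0$, so $J$ is decreasing and this is harmless) there is no issue, but in the strip with $y$ possibly in $[\frac12,x_0)$ one uses the three-case enclosure $\overline J$ of \eqref{eqn:Jenclosure} exactly as in \S\ref{sec:J-I}; I would simply enlarge the partition region slightly past $x_0$ to a nearby dyadic point. Given the generous slack ($G_{LJQ,\beta}$ is bounded well away from $0$ away from a measure-zero set), I expect $maxDepth\le 12$ to suffice and the verification to complete in seconds, as for the analogous claims \eqref{eqn:g_LJQ_1_auto} and \eqref{eqn:g_Q_2_auto}.
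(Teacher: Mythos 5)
Your plan — run $\textsc{Partition}_2$ over region~II with a monotone tight lower bound for $G_{LJQ,\beta}$ — cannot certify the statement, because $G_{LJQ,\beta}$ is \emph{not} strictly positive on the closure of region~II. It vanishes at the corners $(x,y)=(0,\tfrac12)$ and $(x,y)=(0,1)$ (and along no open arc, but Fact~\ref{fact:admissiblepartition} needs strict positivity, and any rectangle abutting those corners will have a nonpositive lower bound, so recursion never terminates successfully). The paper's proof avoids this by first establishing strict concavity of $x\mapsto G_{LJQ,\beta}(x,y)$ (Lemma~\ref{lem:LJQconcave}), which reduces everything to the three $x$-boundary segments of region~II; the two boundary pieces touching zero ($x=0$ and $x+y=1$) are then handled \emph{analytically} — by studying derivatives of $J$ and $L_\beta$ and showing the boundary restrictions are respectively ``increasing then decreasing'' and ``concave vanishing at the endpoint'' — while only the interior boundary segment $x=\tfrac1{16}$, which is uniformly bounded away from zero, is verified by $\textsc{Partition}_2$. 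Your proposal omits precisely the analytic reduction that is the point of this case.

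Two further steps in your write-up are incorrect even as stated. First, $\|J'\|_\infty$ on $[\tfrac12,1]$ is \emph{not} finite: $J(y)=\sqrt2\,w_0 I(w_0^{-1}(1-y))$, so $J'(y)=-\sqrt2\,I'(w_0^{-1}(1-y))\to-\infty$ as $y\to 1^-$, and hence $y\mapsto y+(2^\beta-1)J(y)$ is \emph{not} increasing near $y=1$; the proposed ``reduce to smallest admissible $y$'' step fails. Second, the final ``upgrade to all $\beta\in[\tfrac12,1]$ by monotonicity of $G_\beta$ in $\beta$'' is unjustified: $G^2_\beta[b_\beta]$ involves $2^\beta$, $L_\beta$, and $Q_\beta$ simultaneously, with no global monotonicity; the paper handles $\beta$ by a tailored argument in each subcase (e.g.\ the log-linearity observation $\partial_\beta f_\beta(y)=2^\beta\widetilde f(y)$ in claim~(1), and monotone-in-$\beta$ observations for the other pieces), not by a blanket appeal to Lemma~\ref{lem:holder}.
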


The proof rests on the following observation.

\begin{lem}\label{lem:LJQconcave}
For every $\beta\in [\frac12,1]$ and $y\in [0, 1]$ the function $x\mapsto G_{LJQ,\beta}(x,y)$ is strictly concave on $[0,\frac14]$.
\end{lem}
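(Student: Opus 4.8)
The plan is to establish Lemma \ref{lem:LJQconcave} by computing $\partial_x^2 G_{LJQ,\beta}(x,y)$ explicitly and showing it is strictly negative on $(0,\tfrac14)$ for all $\beta\in[\tfrac12,1]$ and $y\in[0,1]$. Writing out the $x$-dependence of $G_{LJQ,\beta}(x,y)=y-x+(2^\beta-1)J(y)+L_\beta(x)-2Q_\beta(\tfrac{x+y}{2})$, the linear term $y-x$ and the $y$-only terms drop out under $\partial_x^2$, leaving
\[ \partial_x^2 G_{LJQ,\beta}(x,y) = L_\beta''(x) - \tfrac12 Q_\beta''(\tfrac{x+y}{2}). \]
So the lemma reduces to the pointwise inequality $L_\beta''(x) < \tfrac12 Q_\beta''(\tfrac{x+y}{2})$ for $x\in(0,\tfrac14)$, $y\in[0,1]$.

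First I would record from \eqref{eqn:DDL} and \eqref{eqn:DDQ} that $L_\beta''(x)<0$ on $(0,1)$ (part (1) of Lemma \ref{lem:Lprop}) and $Q_\beta''<0$ on $[0,\tfrac12]$ (part (3) of Lemma \ref{lem:Qbprop}); but mere negativity of both is not enough, so the argument must be quantitative. The key point is that $L_\beta''(x)\to -\infty$ as $x\to 0^+$ while it is the \emph{least} negative near $x=\tfrac14$, whereas $\tfrac12 Q_\beta''$ is bounded. The cleanest route: since $x\in[0,\tfrac14]$ and $y\in[0,1]$ we have the argument $\tfrac{x+y}{2}\in[0,\tfrac58]$, but when $\tfrac{x+y}{2}>\tfrac12$ we are outside the domain where $Q_\beta$ models the profile; however $Q_\beta''$ is still a genuine (decreasing, by \eqref{eqn:DDDQ} since $\alpha_1$ may be negative—careful here) linear function, so it suffices to bound $\tfrac12 Q_\beta''$ from below by a constant $-C_\beta$ on $[0,\tfrac58]$ and show $L_\beta''(x) < -C_\beta$ on $(0,\tfrac14]$. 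Because $L_\beta''$ is most negative near $0$ and we only need it bounded \emph{above} by $-C_\beta$, the real content is at $x=\tfrac14$: it suffices to prove $L_\beta''(\tfrac14) < \tfrac12\inf_{z\in[0,5/8]} Q_\beta''(z)$ together with monotonicity of $L_\beta''$ in $x$ near $\tfrac14$. Using Lemma \ref{lem:Lprop}(4), $L_\beta^{(4)}<0$ on $(0,\tfrac12]$, so $L_\beta''$ is concave; combined with $L_\beta'''>0$ on $(0,e^{-\sqrt3/2})\supset(0,\tfrac14)$ (Lemma \ref{lem:Lprop}(3)), $L_\beta''$ is strictly increasing on $[0,\tfrac14]$, hence $L_\beta''(x)\le L_\beta''(\tfrac14)$ there, so it is enough to handle the single point $x=\tfrac14$.

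At $x=\tfrac14$ I would reduce to a one-variable-in-$\beta$ inequality. One has $L_\beta''(\tfrac14) = -\beta(\log 2)^{-\beta}\cdot 4\cdot(\log 4)^{\beta-2}(1-\beta+\log 4)$, an explicit elementary function of $\beta$; monotonicity in $\beta$ (Lemma \ref{lem:Lprop}(2) gives $L_\beta''(\tfrac14)$ decreasing in $\beta$ since $\tfrac14<e^{-1}$) lets one replace $L_\beta''(\tfrac14)$ by its value at $\beta=\tfrac12$, which is a concrete negative number. Similarly $Q_\beta''(z) = -4\alpha_0 - 16\alpha_1 z$ with $\alpha_0=2^{2+\beta}-5$, $\alpha_1 = 3-2^{1+\beta}$; its infimum over $z\in[0,\tfrac58]$ is attained at an endpoint depending on the sign of $\alpha_1$, and using Lemma \ref{lem:Qbprop}(3) ($Q_\beta''$ decreasing in $\beta$ on $[0,\tfrac12]$, and one checks the same one-variable bound suffices on $[0,\tfrac58]$) one again reduces to $\beta=1$ or $\beta=\tfrac12$ and a finite numerical comparison. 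The comparison $L_{1/2}''(\tfrac14) < \tfrac12 Q_\beta''(z)$ uniformly is then a concrete inequality between explicit constants that can be checked by hand or, if desired, tagged with ({\tiny\faCar}) via $\textsc{Partition}_1$ in $\beta$. I expect the main obstacle to be bookkeeping the case $\tfrac{x+y}{2}\in[\tfrac12,\tfrac58]$ cleanly—making sure the bound on $Q_\beta''$ used there is still valid and matches the constants—and confirming the direction of monotonicity of $Q_\beta''$ when $\alpha_1$ changes sign at $\beta=\log_2(\tfrac32)$; everything else is routine once $L_\beta''$ is shown increasing on $[0,\tfrac14]$.
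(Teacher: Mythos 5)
Your computation of $\partial_x^2 G_{LJQ,\beta}(x,y) = L_\beta''(x) - \tfrac12 Q_\beta''(\tfrac{x+y}{2})$ is correct, and your observation that $L_\beta''$ is increasing on $(0,\tfrac14]$ (via Lemma \ref{lem:Lprop}(3)--(4)) is a genuine part of the right argument and matches the paper. However, the decoupling step is a genuine gap. You propose to prove $L_\beta''(\tfrac14) < \tfrac12\inf_{z\in[0,5/8]} Q_\beta''(z)$, and you flag the sign change of $\alpha_1$ at $\beta=\log_2(\tfrac32)$ as a concern but expect it to be routine bookkeeping. It is not: the inequality is false. When $\alpha_1=3-2^{1+\beta}<0$, $Q_\beta''(z)=-4\alpha_0-16\alpha_1 z$ is increasing in $z$, so the infimum over $z\in[0,\tfrac58]$ is $Q_\beta''(0)=-4\alpha_0$. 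At $\beta=1$ this gives $\tfrac12 Q_1''(0)=-6$, while $L_1''(\tfrac14)=-4/\log 2\approx -5.77$, so $L_1''(\tfrac14)>\tfrac12\inf_z Q_1''(z)$ and your proposed inequality fails (it already fails for $\beta$ slightly below $1$). The root cause is that decoupling lets you take $x=\tfrac14$ in the $L''$ term but $z=\tfrac{x+y}{2}=0$ in the $Q''$ term, which corresponds to the impossible combination $x=\tfrac14$ and $x=y=0$ simultaneously. When $\alpha_1<0$ the worst $y$ is $y=0$, but then $z=\tfrac{x}{2}$ is still tied to $x$; plugging $x=\tfrac14$ gives $z=\tfrac18$, not $z=0$, and $L_1''(\tfrac14)-\tfrac12 Q_1''(\tfrac18)=-5.77+5=-0.77<0$ works fine.

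The paper instead keeps the variables coupled: it writes $\partial_x^2 G_{LJQ,\beta}=L_\beta''(x)+2\alpha_0+4\alpha_1(x+y)$ and notes (for $\alpha_1>0$) that this is jointly increasing in $(x,y)$, reducing to the single corner $(x,y)=(\tfrac14,1)$. For $\alpha_1<0$ the expression is decreasing in $y$, so the corner changes to $(\tfrac14,0)$, and one also needs to check $\partial_x$-monotonicity via $L_\beta'''(x)+4\alpha_1>0$ on $(0,\tfrac14]$ --- a genuinely different sign pattern, not analogous bookkeeping, and not something your plan addresses. Your argument for the $\alpha_1>0$ regime does coincide with the paper's (the infimum of $Q_\beta''$ over $z\in[0,\tfrac58]$ is then at $z=\tfrac58$, attained exactly at $(x,y)=(\tfrac14,1)$, so decoupling incurs no loss there), but the $\alpha_1<0$ case needs to be redone with the coupling retained.
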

\begin{proof}
Compute
\[ \partial_x^2 G_{LJQ,\beta}(x,y) = L''_\beta(x) - \tfrac12 Q_\beta''(\tfrac{x+y}2) \]
By Lemma \ref{lem:Qbprop} this equals
\begin{equation}\label{eqn:LJQlempf1} L''_\beta(x) + 2\alpha_0 + 4\alpha_1 (x+y)
\end{equation}
Recall that $\alpha_1=3-2^{1+\beta}>0$ iff $\beta<\log_2(3/2)$.
Let us only consider the case $\beta<\log_2(3/2)$ -- the argument for the other (less interesting) case is analogous.
By Lemma \ref{lem:Lprop} and since $x\le \frac14< e^{-\sqrt{3}/2}$, the quantity \eqref{eqn:LJQlempf1} is increasing in both $x$ and $y$. Thus it suffices to evaluate
\[\partial_x^2 G_{LJQ,\beta}(\tfrac14,1)=L''_\beta(\tfrac14) - \tfrac12 Q_\beta''(\tfrac58)\le L''_{\frac12}(\tfrac14) -\tfrac12 Q''_{\frac12}(\tfrac58)<-0.6, \]
where we have used that $\beta\mapsto L''_\beta(\tfrac14)$ is decreasing by Lemma \ref{lem:Lprop} and that for $x\in[\tfrac12,1]$, the function $\beta\mapsto Q_\beta''(x)$ is increasing.
\end{proof}

Lemma \ref{lem:LJQconcave} reduces the proof of Proposition \ref{prop:caseLJQ} to checking the claim on the $x$-boundary of the region II. Thus it remains to verify the following three claims:
\begin{enumerate}
\item $G_{LJQ,\beta}(0,y)\ge 0$ for all $y\in [\frac12,1]$
\item $G_{LJQ,\beta}(\tfrac1{16},y)\ge 0$ for all $y\in[\tfrac12,\tfrac34]$
\item $G_{LJQ,\beta}(1-y,y)\ge 0$ for all $y\in [\tfrac34,1]$
\end{enumerate}

\begin{proof}[Proof of (1)]
Let
\[ f_\beta(y)=G_{LJQ,\beta}(0,y) = y + (2^\beta-1) J(y) - 2Q_\beta(\tfrac{y}2). \]
Differentiating in $\beta$ we see that $\partial_\beta f_\beta(y)$ takes the form $2^\beta \widetilde{f}(y)$ for some function $\widetilde{f}(y)$. Thus, for a fixed $y$, the quantity $f_\beta(y)$ is either monotone increasing or monotone decreasing in $\beta$. Thus, for all $\beta\in[\frac12, 1]$,
\[ f_\beta(y)\ge \min(f_{\frac12}(y), f_1(y)). \]
Thus we may assume $\beta\in\{\tfrac{1}{2},1\}$ without loss of generality. We give details only for the case $\beta=\frac12$, the case $\beta=1$ being very similar.
Using $JJ''=-2$ we see
\[ f_{\frac12}'(y) = 1 + (\sqrt{2}-1)J'(y) - Q_{\frac12}'(\tfrac{y}2), \]
\[f_{\frac12}''(y) = -2 (\sqrt{2}-1) J(y)^{-1} -\tfrac12 Q''_{\frac12}(\tfrac{y}2)\]
By \eqref{eqn:D4J}, $f_{\frac12}^{(4)}(y)<0$
so $f_{\frac12}''$ is strictly concave on $(\frac12,1)$. Since $f_{\frac12}''(\tfrac{1}{2})=0$ this implies that $f_{\frac12}''$ has at most one zero on $(\tfrac{1}{2},1)$. We evaluate
\begin{equation}
\label{eqn:g_LJQ_11_auto}
f_{\frac12}''(\tfrac{9}{16})>0.05,\quad
f_{\frac12}''(\tfrac{4}{5})<-0.3.
\end{equation}
Thus, $f_{\frac12}''$ has exactly one zero $y_0$ in $(\tfrac{1}{2},1)$ where $f_{\frac12}'$
achieves its unique maximum. Since $f_{\frac12}'$ is increasing on $(\tfrac{1}{2},y_0)$ and decreasing on $(y_0,1)$, and
\begin{equation}
\label{eqn:g_LJQ_13_auto}
f_{\frac12}'(\tfrac{9}{16})>0.05,\quad
f_{\frac12}'(\tfrac{15}{16})<-0.1,
\end{equation}
it follows that $f_{\frac12}'$ has exactly one zero on $(\tfrac{1}{2},1)$. Therefore, $f_{\frac12}$ is increasing on $[\frac12,y_0]$ and decreasing on $[y_0,1]$. Together with $f_{\frac12}(0)=f_{\frac12}(\tfrac{1}{2})=0$, this implies $f_{\frac12}(y)\geq 0$ for $y\in [\frac12,1]$.
\end{proof}

\begin{proof}[Proof of (2)]
A tight lower bound is given by
\[ \underline{g_{LJQ,2}}(\underline{y},\overline{y},\underline{\beta},\overline{\beta}) = \underline{y}-\tfrac{1}{16} + (2^{\underline{\beta}}-1) \underline{J}(\underline{y},\overline{y}) +L_{\underline{\beta}}(\tfrac{1}{16})-2Q_{\overline{\beta}}(\tfrac{1}{32}+ \tfrac{\overline{y}}{2})\]
Running
$\textsc{Partition}_2(\underline{g_{LJQ,2}},[\tfrac{1}{2},\tfrac{3}{4}]\times [\tfrac12,1])$ shows that
\begin{equation}\label{eqn:g_LJQ_2_auto}\auto
G_{LJQ,\beta}(\tfrac1{16},y)>10^{-5}
\end{equation}
for all $y\in [\frac12,\frac34]$ and all $\beta\in [\frac12,1]$.
\end{proof}

\begin{proof}[Proof of (3)]
We need to show that
\[f_\beta(y)=G_{LJQ,\beta}(1-y,y) = 2y-1+(2^\beta-1) J(y)+L_\beta(1-y)-1\ge 0.\]
Observe that this is increasing in $\beta$, so it suffices to consider $\beta=\frac12$.
The function $f_{\frac12}$ is concave on $[\tfrac34,1]$, because
\[f_{\frac12}''(y)= (\sqrt{2}-1) J''(y)+L_{\frac12}''(1-y)\le 0\]
(recall $J''\leq 0$ and Lemma \ref{lem:Lprop}).
Thus it suffices to evaluate at the endpoints: $f_\frac12(1)=0$ and
\begin{equation}\label{eqn:LJQ_II_3_auto}
f_\frac12 (\tfrac{3}{4}) > 0.01.
\end{equation}
\end{proof}

\subsection{\texorpdfstring{Case $LJ$: $0 \leq x \leq \tfrac{1}{4}, \tfrac{1}{2}\leq y\leq 1, 1\leq x+y$}{Case LJ}}
\label{sec:caseLJ}
Note that in this region we have $y\in [\tfrac{3}{4},1]$.
\begin{prop}
For $(x,y)\in [0,\tfrac{1}{4}]\times [\tfrac{3}{4},1]$, $1\leq {x+y}$, and $\beta\in~[\tfrac{1}{2},1]$,
\[y-x + (2^\beta-1) J(y) + L_\beta(x) -2J(\tfrac{x+y}{2})\geq 0.\]
\end{prop}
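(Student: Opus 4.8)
Write $G_{LJ,\beta}(x,y)=y-x+(2^\beta-1)J(y)+L_\beta(x)-2J(\tfrac{x+y}{2})$; this is $G^2_\beta[b_\beta](x,y)$ on the region at hand (as $b_\beta=L_\beta$ on $[0,\tfrac14]$, $b_\beta=J$ on $[\tfrac12,1]$, and $\tfrac{x+y}{2}\ge\tfrac12$ there), so proving $G_{LJ,\beta}\ge0$ yields $G_\beta[b_\beta]\ge0$. Note $x\le\tfrac14$, $y\le1$ and $x+y\ge1$ force $y\in[\tfrac34,1]$ and, for each such $y$, $x\in[1-y,\tfrac14]$. The plan is to reduce to $\beta=\tfrac12$, then to the two boundary curves $x=1-y$ and $x=\tfrac14$ via concavity in $x$. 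For the first reduction, since $J$ is $\beta$-independent, $\partial_\beta G_{LJ,\beta}(x,y)=\log(2)\,2^\beta J(y)+L_\beta(x)\log\log_2(1/x)$, and on the region $J(y)\ge0$, $L_\beta(x)\ge0$, and $\log_2(1/x)\ge2>1$, so $G_{LJ,\beta}$ is nondecreasing in $\beta$ and it suffices to prove $G_{LJ,\frac12}\ge0$.

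Next I would show that for each fixed $y\in[\tfrac34,1]$ the function $x\mapsto G_{LJ,\frac12}(x,y)$ is strictly concave on $[1-y,\tfrac14]$. Using $JJ''=-2$,
\[ \partial_x^2 G_{LJ,\frac12}(x,y)=L_{\frac12}''(x)+J(\tfrac{x+y}{2})^{-1}. \]
By Lemma~\ref{lem:Lprop}(3), $L_{\frac12}'''>0$ on $(0,\tfrac14]$ (since $\tfrac14<e^{-\sqrt3/2}$), so $L_{\frac12}''$ is increasing and $L_{\frac12}''(x)\le L_{\frac12}''(\tfrac14)<-2.7$ via \eqref{eqn:DDL}. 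On the other hand $\tfrac{x+y}{2}\in[\tfrac12,\tfrac58]$, and since $J$ increases on $[\tfrac12,x_0]$ and decreases on $[x_0,1]$ with $x_0<\tfrac58$ (see \eqref{eqn:x0def}), on $[\tfrac12,\tfrac58]$ we have $J\ge\min(J(\tfrac12),J(\tfrac58))>0.49$, hence $J(\tfrac{x+y}{2})^{-1}<2.05$. Thus $\partial_x^2 G_{LJ,\frac12}<0$, so $x\mapsto G_{LJ,\frac12}(x,y)$ attains its minimum over $[1-y,\tfrac14]$ at $x=1-y$ or $x=\tfrac14$.

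It then remains to verify two one-variable inequalities. At $x=1-y$, using $2J(\tfrac12)=1$, one gets $G_{LJ,\frac12}(1-y,y)=2y-1+(\sqrt2-1)J(y)+L_{\frac12}(1-y)-1$, which is exactly the function $f_{\frac12}$ analyzed in the proof of part~(3) of Proposition~\ref{prop:caseLJQ} and shown there to be $\ge0$ on $[\tfrac34,1]$, so this case needs nothing new. At $x=\tfrac14$ we must show
\[ G_{LJ,\frac12}(\tfrac14,y)=y-\tfrac14+(\sqrt2-1)J(y)+\tfrac{\sqrt2}{4}-2J(\tfrac18+\tfrac y2)\ge0,\qquad y\in[\tfrac34,1]. \]
Since $J$ is decreasing on $[\tfrac34,1]$, while the argument $\tfrac18+\tfrac y2\in[\tfrac12,\tfrac58]$ straddles $x_0$, the enclosures from \eqref{eqn:Jenclosure} give the tight lower bound
\[ \underline{g_{LJ}}(\underline y,\overline y)=\underline y-\tfrac14+(\sqrt2-1)J(\overline y)+\tfrac{\sqrt2}{4}-2\,\overline J(\tfrac18+\tfrac{\underline y}{2},\tfrac18+\tfrac{\overline y}{2}), \]
and running $\textsc{Partition}_1(\underline{g_{LJ}},[\tfrac34,1])$ verifies the inequality (with a small positive margin).

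I expect the concavity step to be the crux: individually the two contributions to $\partial_x^2 G_{LJ,\frac12}$ have opposite signs and are of comparable size, so one genuinely needs the quantitative bounds $-L_{\frac12}''\ge2.7$ near $x=\tfrac14$ and $J^{-1}\le2.05$ on $[\tfrac12,\tfrac58]$; these hold with a modest but comfortable margin. After that the proof is routine: one boundary case coincides verbatim with the already-established part~(3) of Proposition~\ref{prop:caseLJQ}, and the other is a single one-dimensional partitioning whose only delicate feature is the small ($\approx0.03$) value of $G_{LJ,\frac12}$ at the corner $(x,y)=(\tfrac14,\tfrac34)$.
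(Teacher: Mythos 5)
Your proof is correct and follows the same high-level scheme as the paper: reduce to $\beta=\tfrac12$ by monotonicity, establish $\partial_x^2$-concavity of $g_{LJ}$ in $x$ on the region, and then verify the two boundary curves $x=1-y$ and $x=\tfrac14$. You depart from the paper at two points, both validly. First, for the concavity step the paper splits into $\tfrac{x+y}{2}\le x_0$ (where $J\ge J(\tfrac12)=\tfrac12$ gives $J^{-1}\le 2$) versus $\tfrac{x+y}{2}\ge x_0$ (where it falls back on $\partial_x^3 g_{LJ}\ge 0$ together with the point evaluation $\partial_x^2 g_{LJ}(\tfrac14,1)\le -0.7$); you instead bound $J^{-1}<2.05$ uniformly on $[\tfrac12,\tfrac58]$ using that $J$ is concave there with $\min(J(\tfrac12),J(\tfrac58))>0.49$, which together with $L_{1/2}''\le L_{1/2}''(\tfrac14)<-2.7$ kills the sign in one line and avoids the case split and the third derivative. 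The numerical input is essentially the same (both require a rigorous enclosure of $J(\tfrac58)$), so this is a genuine streamlining. Second, on the $x=\tfrac14$ boundary the paper proves $y\mapsto g_{LJ}(\tfrac14,y)$ is concave via a slick argument showing $J'/J$ is decreasing (using $(J'/J)'=(-2-(J')^2)/J^2\le 0$) and then checks endpoints; you replace this with a one-dimensional $\textsc{Partition}_1$ on $[\tfrac34,1]$ with a correctly assembled tight lower bound. This is mechanically fine (the margin is $\approx 0.03$ at $y=\tfrac34$), though it is the less illuminating of the two routes. You also correctly observe that the $x+y=1$ boundary is exactly the $f_{1/2}$ of part (3) of Proposition~\ref{prop:caseLJQ}, a redundancy the paper re-derives rather than citing; your shortcut is clean and correct since $Q_\beta(\tfrac12)=J(\tfrac12)=\tfrac12$.
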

\begin{proof}
The left-hand side is increasing in $\beta$, so it suffices to prove the claim for $\beta=\tfrac{1}{2}$.
Denote
\[g_{LJ}(x,y) = y-x + (\sqrt{2}-1) J(y) + L_{\frac{1}{2}}(x) -2J(\tfrac{x+y}{2})\]
We claim that if $(x,y)\in (0,\tfrac{1}{4}]\times [\tfrac{3}{4},1]$ and $1\leq x+y$,
then \[\partial^2_xg_{LJ}(x,y)\leq 0.\]
Using $JJ''=-2$ we have
\[\partial^2_xg_{LJ} (x,y) =
L_{\frac{1}{2}}''(x) +J(\tfrac{x+y}{2})^{-1}.\]
Lemma \ref{lem:Lprop} shows $L''_{1/2}$ is increasing and negative.
Furthermore,
\begin{equation}
\label{eqn:g_LJ_1_auto}
L''_{\tfrac{1}{2}}(\tfrac{1}{4})\leq -2.7.
\end{equation}
If $\tfrac{x+y}{2}\in [\tfrac{1}{2},1-\tfrac{w_0}{2}]$, then $ J(\tfrac{x+y}{2})^{-1}\leq 2$ and hence $\partial^2_xg_{LJ} (x,y)\leq 0$.

If $\tfrac{x+y}{2}\geq 1-\tfrac{w_0}{2}$, it suffices to show that
$\partial^3_xg_{LJ}(x,y) \geq 0$ and
$\partial^2_xg_{LJ} (\tfrac{1}{4},y) \leq 0 $.
To show the latter we first evaluate
\begin{equation}
\label{eqn:g_LJ_2_auto}
\partial^2_xg_{LJ} (\tfrac{1}{4},1) \leq -0.7.
\end{equation}
Since $J^{-1}$ is increasing on $[\tfrac{3}{4},1]$, we have $\partial^2_xg_{LJ} (\tfrac{1}{4},y) \leq 0 $ for all $y\in [\tfrac{3}{4},1]$.
To show $\partial^3_xg_{LJ}\geq 0$ we calculate
\[\partial^3_xg_{LJ}(x,y) = L_{\frac{1}{2}}'''(x) -\tfrac{1}{2}J'(\tfrac{x+y}{2}) J(\tfrac{x+y}{2})^{-2}.\]
Since in the current region $J'(\tfrac{x+y}{2})\leq 0$ and $L'''_{1/2}\ge 0$ by Lemma \ref{lem:Lprop}, the claim follows.

It remains to show non-negativity on the boundary of the region LJ. If $y=1$, we already know $\tfrac{d^2}{dx^2}g_{LJ}(x,1)\leq 0$. Then we evaluate $g_{LJ}(0,1)=0$ and
\begin{equation}
\label{eqn:g_LJ_3_auto}
g_{LJ}(\tfrac{1}{4},1)\geq 0.1,
\end{equation}
which shows $g_{LJ}(x,1)\geq 0$ for all $x\in [0,\tfrac{1}{4}]$.

Next we consider the case $x+y=1$, $x\in [0,\tfrac{1}{4}], y\in [\tfrac{3}{4},1]$ and let
\[g_{LJ}(1-y,y) = 2y+(\sqrt{2}-1) J(y)+L_{\frac{1}{2}}(1-y)-2.\]
We have
\begin{equation}
\label{eqn:g_LJ_4_auto}
g_{LJ}(\tfrac{1}{4},\tfrac{3}{4})\geq 0.02,
\end{equation}
$g_{LJ}(0,1)= 0$, and
\[\partial^2_y g_{LJ}(x,y) = (\sqrt{2}-1) J''(y) + L_{\frac{1}{2}}''(1-y)\leq 0,\]
which implies $g_{LJ}(1-y,y)\geq 0$ for all $y\in [\tfrac{3}{4},1]$.

Finally, we tackle the case $x=\tfrac{1}{4}$, $y\in [\tfrac{3}{4},1]$.
We have
\[f(y)=g_{LJ}(\tfrac{1}{4},y)=y-\tfrac{1}{4}+(\sqrt{2}-1) J(y)+L_{\frac{1}{2}}(\tfrac{1}{4})-2J(\tfrac{y}{2} + \tfrac{1}{8}).\]
Since we already know that this function is non-negative at the endpoints of the interval, it suffices to show $f''(y)\leq 0$ for $y\in [\tfrac{3}{4},1)$.
Using $JJ''=-2$ we calculate
\[ f''(y) =
-2(\sqrt{2}-1) J(y)^{-1}+J(\tfrac{y}{2} + \tfrac{1}{8})^{-1}.\]
It suffices to show
\[J(y)J(\tfrac{y}{2} + \tfrac{1}{8})^{-1}-2(\sqrt{2}-1)\leq 0.\]
At $y=0$, the left-hand side vanishes, so the inequality holds. Thus, it suffices to show that
\[\tfrac{d}{dy} (J(y)J(\tfrac{y}{2} + \tfrac{1}{8})^{-1}) = \frac{J'(y)J(\tfrac{y}{2} + \tfrac{1}{8})-\tfrac{1}{2}J(y)J'(\tfrac{y}{2} + \tfrac{1}{8})}{J(\tfrac{y}{2} + \tfrac{1}{8})^2}\]
is non-positive. For this it suffices to show
\[\frac{2J'(y)}{J(y)} \leq \frac{J'(\tfrac{y}{2} + \tfrac{1}{8})}{J(\tfrac{y}{2} + \tfrac{1}{8})}\]
Since $J\geq 0$ and $J'(y)<0$ on our interval, it suffices to show
\[\frac{J'(y)}{J(y)} \leq \frac{J'(\tfrac{y}{2} + \tfrac{1}{8})}{J(\tfrac{y}{2} + \tfrac{1}{8})}\]
Since $y\geq \tfrac{y}{2}+\tfrac{1}{8}$, it suffices to show $y\mapsto \frac{J'(y)}{J(y)}$ is decreasing.
Indeed,
\[\frac{{d}}{dy} \Big( \frac{J'(y)}{J(y)} \Big )
= \frac{-2-(J'(y))^2}{J(y)^2} \leq 0.\]
Thus, we deduce $f''(y)\leq 0$ as desired.
\end{proof}

\subsection{\texorpdfstring{Case $QJQ$: $\frac14\leq x\leq \frac12\leq y, x+y\leq 1$}{Case QJQ}}
\label{sec:caseQJQ}
Note that the region is contained in the rectangle $[\frac14,\frac12]\times [\frac12,\frac34]$, see Figure \ref{fig:cases}.

\begin{prop}[$QJQ$]
Let $\beta_1=\tfrac12+\tfrac{31}{1024}\approx 0.53$.
For all $(x,y)\in [\frac14,\frac12]\times [\frac12,\frac34]$ and $\beta\in [\tfrac12, \beta_1]$:
\begin{equation}\label{eqn:QJQclaim}
G_{QJQ,\beta}(x,y)=(y-x)^2 + J(y)^2 - (2Q_\beta(\tfrac{x+y}{2})-Q_\beta(x))^2 \geq 0.
\end{equation}
\end{prop}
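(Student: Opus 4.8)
\emph{Overview and Step 1 (reduction in $\beta$).} The plan is to strip away the parameter $\beta$ first and then collapse the two-dimensional problem onto the edge $y=\tfrac12$, where $G_{QJQ,\beta}$ has an explicit factorised form. Throughout write $m=\tfrac{x+y}2$, which ranges over $[\tfrac38,\tfrac58]$. Expanding the cubic gives $Q_\beta(x)=\tfrac23x(1-x)(12x-3)+\tfrac83x(1-x)(1-2x)\,2^{\beta}$, so for fixed $(x,y)$ the quantity $2Q_\beta(m)-Q_\beta(x)$ is \emph{affine} in $2^{\beta}$; hence its square is convex in $2^{\beta}$ and $G_{QJQ,\beta}(x,y)$ is concave in $2^{\beta}$. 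Therefore $G_{QJQ,\beta}(x,y)\ge\min\big(G_{QJQ,1/2}(x,y),\,G_{QJQ,\beta_1}(x,y)\big)$ for $\beta\in[\tfrac12,\beta_1]$, and it suffices to prove \eqref{eqn:QJQclaim} for the two endpoint values $\beta\in\{\tfrac12,\beta_1\}$.

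\emph{Step 2 (monotonicity in $y$).} Fix $\beta\in\{\tfrac12,\beta_1\}$. I would prove
\[\partial_yG_{QJQ,\beta}(x,y)=2(y-x)+2J(y)J'(y)-2\big(2Q_\beta(m)-Q_\beta(x)\big)Q'_\beta(m)>0\]
on the whole rectangle $[\tfrac14,\tfrac12]\times[\tfrac12,\tfrac34]$ by $\textsc{Partition}_2$. A tight lower bound is straightforward to assemble: $J$ is unimodal on $[\tfrac12,1]$ and $J'$ is strictly decreasing with a single sign change at $x_0\approx0.553$, so the enclosures for $J$ and $J'$ from \S\ref{sec:J-I} apply; $Q_\beta$ and $Q'_\beta$ are controlled by their piecewise monotonicity on $[\tfrac14,\tfrac58]$; and $2Q_\beta(m)-Q_\beta(x)>0$ there since $Q_\beta$ is concave and nonnegative. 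Granted this, $G_{QJQ,\beta}(x,y)\ge G_{QJQ,\beta}(x,\tfrac12)$ for all $y\in[\tfrac12,\tfrac34]$, so everything reduces to the edge $y=\tfrac12$.

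\emph{Step 3 (the edge $y=\tfrac12$).} Write $x=\tfrac12-s$ with $s\in[0,\tfrac14]$. Using $J(\tfrac12)^2=\tfrac14$ and that $Q_\beta$ is the cubic with $Q_\beta(\tfrac12)=\tfrac12$, $Q''_\beta(\tfrac12)=-4$, $Q'''_\beta=-16\alpha_1$, one computes $2Q_\beta(\tfrac12-\tfrac s2)-Q_\beta(\tfrac12-s)=\tfrac12+s^2-2\alpha_1s^3$, hence
\[G_{QJQ,\beta}(\tfrac12-s,\tfrac12)=s^2+\tfrac14-\big(\tfrac12+s^2-2\alpha_1s^3\big)^2=s^3\big(2\alpha_1-s+4\alpha_1s^2-4\alpha_1^2s^3\big).\]
Since $\alpha_1=3-2^{1+\beta}>0$ for $\beta\in\{\tfrac12,\beta_1\}\subset[\tfrac12,\log_2\tfrac32)$ and the bracket has derivative $-1+8\alpha_1s-12\alpha_1^2s^2$ with no zero in $[0,\tfrac14]$ (its zeros $\tfrac1{6\alpha_1}$ and $\tfrac1{2\alpha_1}$ both exceed $\tfrac14$), the bracket is decreasing on $[0,\tfrac14]$, so it suffices that it be nonnegative at $s=\tfrac14$, i.e. $\tfrac94\alpha_1-\tfrac14-\tfrac1{16}\alpha_1^2\ge0$. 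This holds exactly when $\alpha_1\ge18-8\sqrt5\approx0.1115$, equivalently $\beta\le\log_2(8\sqrt5-15)-1\approx0.5303$; the dyadic value $\beta_1=\tfrac12+\tfrac{31}{1024}$ sits just below this threshold, which is precisely why it is chosen here. Combining with Step 2 finishes the proof.

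\emph{Main obstacle.} The crux is Step 2. Near the corner $(\tfrac14,\tfrac12)$ the quantity $\partial_yG_{QJQ,\beta}$ is only about $0.05$, and at $(\tfrac12,\tfrac12)$ the function $G_{QJQ,\beta}$ itself vanishes (to third order along $y=\tfrac12$, by Step 3), so both the lower bound and the partition have to be very sharp in that region. Should $\partial_yG_{QJQ,\beta}>0$ fail at some point of the rectangle, the fallback is to excise a small square $\{\tfrac12-\delta\le x\le\tfrac12,\ \tfrac12\le y\le\tfrac12+\delta\}$, bound $G_{QJQ,\beta}$ there by a two-variable Taylor expansion about $(\tfrac12,\tfrac12)$ in the spirit of Lemma \ref{lem:neardiaglowerbd} (the leading term $(J'(\tfrac12)-Q'_\beta(\tfrac12))(y-\tfrac12)$ is nonnegative since $J'(\tfrac12)>Q'_\beta(\tfrac12)$, and on $y=\tfrac12$ one recovers the cubic of Step 3), and run $\textsc{Partition}_2$ on $G_{QJQ,\beta}$ directly on the complement, where it is strictly positive.
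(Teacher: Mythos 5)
Your proposal is correct and follows essentially the same three-step strategy as the paper: reduce $\beta$ to the endpoints $\{\tfrac12,\beta_1\}$, show $\partial_y G_{QJQ,\beta}>0$ on the rectangle via $\textsc{Partition}_2$ to collapse to $y=\tfrac12$, and analyse the resulting cubic. Two small touches where your write-up is actually a bit cleaner than the paper's: you argue concavity of $G_{QJQ,\beta}$ in $2^\beta$ (valid because $2Q_\beta(m)-Q_\beta(x)$ is affine in $2^\beta$, so its square is convex in $2^\beta$, and this does not require knowing its sign), whereas the paper invokes monotonicity in $\beta$, which does need the extra fact that $2Q_\beta(m)-Q_\beta(x)>0$; and your substitution $x=\tfrac12-s$ yields the tidy factorisation $s^3(2\alpha_1-s+4\alpha_1s^2-4\alpha_1^2s^3)$ together with the explicit threshold $\alpha_1\ge 18-8\sqrt5$, i.e.\ $\beta\le\log_2(8\sqrt5-15)-1$, which explains why the specific dyadic $\beta_1=\tfrac12+\tfrac{31}{1024}$ was chosen (the paper just factors as $\tfrac14(1-2x)^3p_\beta(x)$ and evaluates numerically, remarking only that the conclusion fails for $\beta\ge\beta_1+\tfrac1{1024}$).
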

\begin{rem}
The conclusion fails for $\beta\ge \beta_1+\tfrac1{1024}$.
\end{rem}

From \eqref{eqn:Qbetaderiv} we see that for each fixed $(x,y)$ the left-hand side in \eqref{eqn:QJQclaim} is either monotone increasing or decreasing in $\beta$.
Thus,
\[ G_{QJQ,\beta}(x,y)\ge \min(G_{QJQ,1/2}(x,y),G_{QJQ,1}(x,y)) \]
and it suffices to show the claim for $\beta=\tfrac12$ and $\beta=\beta_1$.
The $y$-derivative of the left-hand side in \eqref{eqn:QJQclaim} is $2$ times
\begin{equation}\label{eqn:gQJQdef}
g_{QJQ,\beta}(x,y)=(y-x) +J(y)J'(y)-(2Q_\beta(\tfrac{x+y}{2})-Q_\beta(x))Q_\beta'(\tfrac{x+y}{2}).
\end{equation}
We begin by showing that this quantity is strictly positive for all $(x,y)\in [\frac14,\frac12]\times [\frac12,\frac34]$ thus reducing to the case $y=\frac12$.
This can be done by $\textsc{Partition}_2$. In order to formulate a tight lower bound we record the monotonicity of the various terms appearing in \eqref{eqn:gQJQdef}:

\begin{enumerate}
\item The function $x\mapsto J(x)J'(x)$ is decreasing on $x\in [\frac12, \frac34]$.
\begin{proof}
\[(JJ')' = (J')^2-2\]
using that $JJ''=-2$.
Also $(J')^2$ is convex (Lemma \ref{lem:Iprimesqconvex}), so it suffices to evaluate $(J')^2-2$ at the endpoints $x=\frac12$ and $x=\frac34$ which shows $(J')^2-2<-1<0$.
\end{proof}
\item The quantity $2Q_\beta(\frac{x+y}2)-Q_\beta(x)$ is positive, increasing in $y$ and decreasing in $x$.
\begin{proof}
Recall Lemma \ref{lem:Qbprop}. First,
$2Q_\beta(\frac{x+y}2)-Q_\beta(x)\ge Q_\beta(y)>0$ follows since $Q_\beta$ is concave.
The quantity is increasing in $y$ since $Q_\beta'>0$ on $[0,\frac12]$. Finally, the $x$-derivative is
\[ Q'_\beta(\tfrac{x+y}{2})-Q'_\beta(x) = Q_\beta''(\xi) \tfrac{y-x}{2}<0 \]
where $\xi$ is a value in $[x,(x+y)/2]$.
\end{proof}
\item The function $x\mapsto Q_\beta'(x)$ is decreasing and positive on $[0,\frac12]$ by Lemma \ref{lem:Qbprop}.
\end{enumerate}

Therefore, a tight lower bound of $g_{QJQ,\beta}$ is given by
\[ \underline{g_{QJQ,\beta}}(\underline{x},\overline{x},\underline{y},\overline{y}) = \underline{y}-\overline{x} + J(\underline{y})J'(\underline{y}) - (2Q_\beta(\tfrac{\underline{x}+\overline{y}}2)-Q_\beta(\underline{x}))Q'_\beta(\tfrac{\underline{x}+\underline{y}}2). \]
Calling $\textsc{Partition}_2(\underline{g_{QJQ,\beta}},[\frac14,\frac12]\times[\frac12,\frac34])$ for $\beta\in\{\frac12,\beta_1\}$
shows that
\begin{equation}\label{eqn:g_QJQ_auto}\auto g_{QJQ,\beta}>10^{-5}
\end{equation}
on $[\frac14,\frac12]\times [\frac12,\frac34]$.
To finish the proof it now suffices to show \eqref{eqn:QJQclaim} for $y=\frac12$, that is
\[(\tfrac{1}{2}-x)^2 + \tfrac{1}{4} - (2Q_\beta(\tfrac{x}{2} + \tfrac{1}{4})-Q_\beta(x))^2 \ge 0 \]
for $x\in [\frac14,\frac12]$.
The left-hand side is a polynomial in $x$ that factors as
\[ \tfrac14(1-2x)^3 p_\beta(x), \]
where
\[ p_\beta(x)= (18- 3\cdot 2^{3 + \beta}+
2^{3 + 2 \beta})x^3 + (2^{5 + \beta}-3\cdot 2^{2 + 2 \beta}- 21)x^2\]\[ + (8+3\cdot 2^{1 + 2 \beta}-7\cdot 2^{1 + \beta})x + 2 - 2^{2 \beta} \]
Plugging in $\beta=\beta_1,\frac12$ one sees that the coefficients of $x,x^2,x^3$ are positive, so $p_\beta$ is an increasing function. Finally, at $x=\frac14$ one computes
\[p_\beta(\tfrac14)>10^{-4}>0\]
for $\beta\in \{\beta_1,\frac12\}$ (actually $p_\beta(\tfrac14)$ is decreasing in $\beta$) .

\subsection{\texorpdfstring{Case $QJ$: $\frac14\le x\le \frac12\le y\le 1$, $x+y\ge 1$}{Case QJ}}
\label{sec:caseQJ}
We distinguish two cases, see Figure \ref{fig:qj}.
The near-diagonal triangle I is the most critical: here it is again necessary to move away from $\beta=\frac12$ (or include a constant $c<1$).
Note that in this region $G^1_\beta[J](x,y)$ is increasing in $\beta$.

\begin{figure}[htb]
\centering
\begin{tikzpicture}[scale=8]
\draw (1/2,1/2)--(1/2,1)--(1/4,1)--(1/4,3/4)--cycle;
\draw (3/8,5/8)--(1/2,5/8);
\node at (1/4,1/2-.05) {\footnotesize $\tfrac{1}{4}$};
\node at (3/8,1/2-.05) {\footnotesize $\tfrac{3}{8}$};
\node at (1/2,1/2-.05) {\footnotesize $\tfrac{1}{2}$};
\node at (1/4-.03,1/2) {\footnotesize $\tfrac{1}{2}$};
\node at (1/4-.03,3/4) {\footnotesize $\tfrac{3}{4}$};
\node at (1/4-.03,5/8) {\footnotesize $\tfrac{5}{8}$};
\node at (1/4-.03,1) {\tiny $1$};
\draw[opacity=.1] (1/4,3/4)--(1/4,1/2)--(1/2,1/2);
\draw[opacity=.1] (3/8,1/2)--(3/8,5/8)--(1/4,5/8);
\node at (3.6/8,4.75/8) {\tiny \hyperref[sec:QJ-I]{I}};
\node at (3/8,6.5/8) {\tiny \hyperref[sec:QJ-II]{II}};
\end{tikzpicture}
\caption{Case $QJ$}
\label{fig:qj}
\end{figure}

\subsubsection{Near diagonal: $3/8\leq x\leq 1/2, 1/2\leq y\leq 5/8, x+y\geq 1$}
This is the triangle I in Figure \ref{fig:qj}.
\label{sec:QJ-I}

\begin{prop}
For $\frac38\le x\le \frac12\le y\le \frac58$ and $\beta\in [\beta_0,1]$,
\[ G^1_{\beta}[b_{\beta_0}](x,y)\ge 0\quad\text{and}\quad G_{\frac12}^1[c_0\cdot b_{\frac12}](x,y)\ge 0. \]
\end{prop}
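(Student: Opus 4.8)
The plan is to reuse the machinery of Case $J$.I (\S\ref{sec:J-I}): apply the near-diagonal lower bound Lemma~\ref{lem:neardiaglowerbd}, factor out $h^{1/\beta}$ with $h=y-x$, and reduce to showing that the resulting function is bounded below by a positive constant, which is then verified by $\textsc{Partition}_2$ with interval arithmetic.

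First, for the first inequality it suffices to take $\beta=\beta_0$: the only $\beta$-dependent term of $G^1_\beta[b_{\beta_0}]$ is $((y-x)^{1/\beta}+b_{\beta_0}(y)^{1/\beta})^\beta=\|(y-x,\,b_{\beta_0}(y))\|_{1/\beta}$, which is increasing in $\beta$ on $[\tfrac12,1]$ since $\ell^p$-norms decrease in $p$ and $p=1/\beta\ge1$. So we work with $c\cdot b_\beta$ for $(\beta,c)\in\{(\beta_0,1),(\tfrac12,c_0)\}$ throughout. In the triangle $\tfrac38\le x\le\tfrac12\le y\le\tfrac58$, $x+y\ge1$ we have $b_\beta(x)=Q_\beta(x)$, $b_\beta(y)=J(y)$, and --- crucially --- $b_\beta(\tfrac{x+y}2)=J(\tfrac{x+y}2)$, because $x+y\ge1$ forces $\tfrac{x+y}2\ge\tfrac12$ while $\tfrac{x+y}2\le\tfrac9{16}<1$ (also $y-x\le\tfrac14<J(y)$, so in fact $G_\beta=G^1_\beta$ here, though we only need the $G^1$ bound). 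Lemma~\ref{lem:neardiaglowerbd} applied to $B=c\cdot b_\beta$ then gives
\[ G^1_\beta[c\cdot b_\beta](x,x+h)\ \ge\ \beta c^{1-\frac1\beta}J(x+h)^{1-\frac1\beta}h^{\frac1\beta}\ +\ c\bigl[Q_\beta(x)+J(x+h)-2J(x+\tfrac h2)\bigr]\ -\ \tfrac12\beta(1-\beta)c^{1-\frac2\beta}J(x+h)^{1-\frac2\beta}h^{\frac2\beta}. \]

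The key manoeuvre is to expand around the corner $\tfrac12$, where $Q_\beta$ and $J$ agree ($Q_\beta(\tfrac12)=J(\tfrac12)=\tfrac12$), so the constant terms of the bracket cancel. Write $x=\tfrac12-s$, $y=\tfrac12+t$ with $0\le s\le t\le\tfrac18$ (the constraint $x+y\ge1$ becomes $t\ge s$); then $h=s+t$ and $\tfrac{x+y}2=\tfrac12+u$ with $u=\tfrac{t-s}2$. Since $Q_\beta$ is a cubic, $Q_\beta(\tfrac12-s)$ is an explicit polynomial in $s$; Taylor-expanding $J(\tfrac12+t)$, $J(\tfrac12+u)$ at $\tfrac12$ to order $N$ (I expect $N=6$ is needed, as in Case $J$.I, with remainders controlled via $J''=-2J^{-1}$, $J'''=2J'J^{-2}$, $J^{(4)}=-4(1+|J'|^2)J^{-3}$, $\dots$ and enclosures of $J,J'$ on $[\tfrac12,\tfrac58]$ analogous to \eqref{eqn:Jenclosure}) and using the identity $t-2u=s$, the bracket becomes $c\bigl(J'(\tfrac12)-Q_\beta'(\tfrac12)\bigr)s$ plus terms quadratic and higher in $(s,t)$; the linear term is $\ge0$ since $Q_\beta'(\tfrac12)=2-\tfrac13 2^{2+\beta}\le Q_{1/2}'(\tfrac12)<J'(\tfrac12)$. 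Dividing the whole lower bound by $h^{1/\beta}$ produces a function $g_{QJ,1,\beta,c}(s,t)$; at $s=0$, $h=t\to0$ its value is the positive constant $\tfrac1c-c$ for $\beta=\tfrac12$ (this is exactly where $c_0<1$ enters) and $\beta\,2^{\frac1\beta-1}$ for $\beta=\beta_0$, and one checks it stays bounded below by a small positive constant on the whole region. Assembling an explicit tight lower bound $\underline{g_{QJ,1,\beta,c}}$ from the polynomial part, the Taylor polynomials with enclosed coefficients, the remainder bounds for $J^{(N)}$ and the bound on $E_1$ from Lemma~\ref{lem:neardiaglowerbd}, and running $\textsc{Partition}_2(\underline{g_{QJ,1,\beta,c}},\{0\le s\le t\le\tfrac18\})$ for $(\beta,c)\in\{(\beta_0,1),(\tfrac12,c_0)\}$ completes the proof.

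The main obstacle is that this is a genuinely critical case: at $\beta=\tfrac12$, $c=1$ the two-point inequality degenerates to an equality at the corner $(\tfrac12,\tfrac12)$ to leading order, so the margin afforded by $c_0=0.997$ (respectively by $\beta_0>\tfrac12$) is minuscule. Consequently the Taylor expansion must be carried far enough, and the interval enclosures of $J$, $J'$ and $J^{(N)}$ kept tight enough, that $\textsc{Partition}_2$ can still certify strict positivity; producing a clean, not-too-lossy tight lower bound $\underline{g_{QJ,1,\beta,c}}$ --- in particular one that correctly captures the interplay near $s=0$ between the positive linear-in-$s$ term and the $h^{1/\beta}$-versus-quadratic competition --- is the delicate point, exactly as in \S\ref{sec:J-I}.
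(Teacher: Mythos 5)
Your approach is correct in outline but takes a genuinely different route from the paper's. The paper rewrites the inequality as $g_{QJ,\beta,c}(x,y)=(y-x)^{1/\beta}+c^{1/\beta}J(y)^{1/\beta}-c^{1/\beta}(2J(\tfrac{x+y}{2})-Q_\beta(x))^{1/\beta}\ge 0$, then shows by $\textsc{Partition}_2$ that $-\beta\partial_x g_{QJ,\beta,c}>10^{-5}$ on the whole rectangle $[\tfrac14,\tfrac12]\times[\tfrac12,\tfrac58]$, so the minimum in $x$ is at $x=\tfrac12$; but $g_{QJ,\beta,c}(\tfrac12,y)\ge 0$ is exactly $G^1_\beta[cJ](\tfrac12,y)\ge 0$, which is already Proposition~\ref{prop:caseJ}. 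In other words, the paper isolates the degeneracy entirely in Case~$J$.I and only verifies here a strict, non-degenerate monotonicity statement with a comfortable margin. You instead attack the 2D region directly: apply Lemma~\ref{lem:neardiaglowerbd}, Taylor-expand both $Q_\beta$ and $J$ around the corner $\tfrac12$ (noting the cancellations $Q_\beta(\tfrac12)=J(\tfrac12)$, $Q_\beta''(\tfrac12)=J''(\tfrac12)=-4$, and the favorable sign $J'(\tfrac12)>Q_\beta'(\tfrac12)$), divide by $h^{1/\beta}$, and run $\textsc{Partition}_2$ on the resulting function of $(s,t)$. Your analysis of the limiting value at the corner ($\tfrac1c-c$ for $\beta=\tfrac12$, $\beta\,2^{1/\beta-1}$ for $\beta=\beta_0$) is correct, and the order-$6$ expansion with enclosed remainders would indeed be needed; but this essentially re-does the delicate near-degenerate analysis of \S\ref{sec:J-I} in a slightly harder mixed $Q/J$ setting, whereas the paper's monotonicity reduction reuses that work for free. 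One small caveat: the statement carries the implicit constraint $x+y\ge 1$ (inherited from the case decomposition, see the subsection header), which you correctly incorporate as $s\le t$; without it $b_{\beta}(\tfrac{x+y}{2})$ would be $Q_\beta$, not $J$, and the expansion would be different.
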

By monotonicity in $\beta$, it suffices to consider $\beta=\beta_0$ to show the first part of the claim.
Let us consider the equivalent expression
\[ g_{QJ,\beta,c}(x,y)=(y-x)^{\frac1\beta} +c^{\frac1\beta} J(y)^{\frac1\beta}-c^{\frac1\beta}(2 J(\tfrac{x+y}{2})-Q_\beta(x))^{\frac1\beta}.\]
(Observe that $2J(\tfrac{x + y}2) - Q_\beta(x)>0$.)
We claim that $\partial_x g_{QJ,\beta}(x,y)\leq 0$ for each $y$. Calculate
\[-\beta \partial_x g_{QJ,\beta,c}(x,y) = (y-x)^{\frac{1}{\beta}-1}+c^{\frac1\beta}(2J(\tfrac{x+y}{2})-Q_\beta(x))^{\frac1\beta-1} (J'(\tfrac{x+y}{2})-Q'_\beta(x))\]
It suffices to show that this is positive.
Recalling Lemma \ref{lem:Qbprop} and the enclosures for $J, |J'|$ (see \eqref{eqn:Jenclosure}), a tight lower bound of this expression can be given by
\[ \underline{g_{QJ,1,\beta,c}}(\underline{x},\overline{x},\underline{y},\overline{y})
=(\underline{y}-\overline{x})^{\frac1\beta-1}+c^{\frac1\beta}(2\underline{J}(\tfrac{\underline{x}+\underline{y}}2,\tfrac{\overline{x}+\overline{y}}2)-Q_\beta(\overline{x}))^{\tfrac1\beta-1} J'(\tfrac{\overline{x}+\overline{y}}2)\mathbf{1}_{\overline{x}<x_0}
\]
\[ - c^{\frac1\beta}(2\overline{J}(\tfrac{\underline{x}+\underline{y}}2,\tfrac{\overline{x}+\overline{y}}2)-Q_\beta(\underline{x}))^{\tfrac1\beta-1} \overline{|J'|}(\tfrac{\underline{x}+\underline{y}}2,\tfrac{\overline{x}+\overline{y}}2)\mathbf{1}_{\text{not}\;\overline{x}<x_0} \]
\[ - c^{\frac1\beta} (2\overline{J}(\tfrac{\overline{x}+\underline{y}}2,\tfrac{\overline{x}+\overline{y}}2)-Q_\beta(\underline{x}))^{\tfrac1\beta-1}Q'_\beta(\underline{x}). \]
Running $\textsc{Partition}_2(\underline{g_{QJ,1,\beta,c}},[\frac14,\frac12]\times [\frac12,\frac58])$ shows $(\beta,c)\in \{(\beta_0,1),(\frac12,c_0)\}$,
\begin{equation}\label{eqn:g_QJ_1_auto}\auto
-\beta\partial_x g_{QJ,\beta,c} > 10^{-5}
\end{equation}
on this region.
Thus it only remains to check that
$g_{QJ,\beta,c}(\tfrac12, y)\ge 0$
for all $y\in [\tfrac12,\tfrac58]$ and $(\beta,c)\in \{(\beta_0,1),(\frac12,c_0)\}$, but this is equivalent to showing $G^1_\beta[c J](\frac12,y)\ge 0$ for these values, which already follows from Proposition \ref{prop:caseJ}.

\subsubsection{Far from diagonal: $\tfrac14\leq x\leq \tfrac12, \frac58\leq y\le 1$, $x+y\ge 1$}
\label{sec:QJ-II}
We will show the following claim that is stronger than required:
\begin{prop}
For all $x\in [\frac14, \frac12]$ and $y\in[\frac58,1]$,
\begin{equation}\label{eqn:g_QJ_2_auto}\auto
g_{QJ,2}(x,y)= ((y-x)^2 +J(y)^2)^{1/2}+Q_{\frac12}(x)-2J(\tfrac{x+y}{2})>10^{-7}.
\end{equation}
\end{prop}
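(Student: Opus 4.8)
The plan is to verify \eqref{eqn:g_QJ_2_auto} by the computer-assisted dyadic partitioning method of \S\ref{sec:auto}, just as in the preceding cases. No reduction via logarithmic or higher-order derivatives appears necessary here: a naive tight lower bound for $g_{QJ,2}$ on the rectangle $R=[\tfrac14,\tfrac12]\times[\tfrac58,1]$ should already suffice. The first step is to collect the monotonicity of the constituent terms on $R$. Since $\tfrac58>\tfrac12$, we have $y-x>0$ throughout $R$; since $\tfrac58>x_0$ (recall \eqref{eqn:x0def}), the function $J$ is nonnegative and decreasing on $[\tfrac58,1]$, so $J(y)^2$ is decreasing in $y$ there; and $Q_{\frac12}$ is increasing on $[0,\tfrac12]$ by Lemma \ref{lem:Qbprop}. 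As $t\mapsto t^{1/2}$ is increasing, these facts give
\[ ((y-x)^2+J(y)^2)^{1/2}\ge \big((\underline y-\overline x)^2+J(\overline y)^2\big)^{1/2}, \quad Q_{\frac12}(x)\ge Q_{\frac12}(\underline x) \]
whenever $(\underline x,\underline y)\le (x,y)\le(\overline x,\overline y)$ inside $R$.

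For the remaining term $-2J(\tfrac{x+y}{2})$ I would use the interval enclosure $\overline J$ from \eqref{eqn:Jenclosure} (with $x_0$ replaced by its floating-point enclosure $[0.552,0.553]$ supplied by \eqref{eqn:x0def}); this is legitimate because on $R$ the argument $\tfrac{x+y}{2}$ ranges over $[\tfrac7{16},\tfrac34]$, which lies strictly inside $(1-w_0,1)$ where $J$ is defined and unimodal with its unique maximum at $x_0$. Assembling these estimates, a tight lower bound for $g_{QJ,2}$ is
\[ \underline{g_{QJ,2}}(\underline x,\overline x,\underline y,\overline y)=\big((\underline y-\overline x)^2+J(\overline y)^2\big)^{1/2}+Q_{\frac12}(\underline x)-2\,\overline J\big(\tfrac{\underline x+\underline y}{2},\tfrac{\overline x+\overline y}{2}\big). \]
The final step is to run $\textsc{Partition}_2(\underline{g_{QJ,2}},[\tfrac14,\tfrac12]\times[\tfrac58,1])$; this returns an admissible partition and so certifies $g_{QJ,2}>10^{-7}$ on $R$. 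As throughout the paper, the evaluation is performed in FLINT/Arb interval arithmetic and the resulting partition is recorded in the ancillary file.

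The one delicate point I anticipate is that $g_{QJ,2}$ comes quite close to $0$ near the corner $(x,y)=(\tfrac12,\tfrac58)$: a back-of-the-envelope evaluation there gives a value only on the order of $10^{-3}$, so $\textsc{Partition}_2$ will have to subdivide comparatively finely in that neighborhood. Nevertheless $g_{QJ,2}$ is smooth and strictly positive on the whole closed rectangle $R$, so by Fact \ref{fact:admissiblepartition} the procedure is guaranteed to terminate successfully once $maxDepth$ is large enough (and $maxDepth=12$ will comfortably suffice). Unlike Case $J$.I in \S\ref{sec:J-I}, there is here no competition with the precise constants $\beta_0,c_0$ — the exponent is fixed at $\tfrac12$ and no constant $c<1$ is needed — so the plain enclosure above already leaves a healthy margin, and I do not expect any genuine obstacle.
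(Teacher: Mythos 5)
Your proposal is correct and reproduces the paper's proof essentially verbatim: the same tight lower bound $\underline{g_{QJ,2}}(\underline x,\overline x,\underline y,\overline y)=((\underline y-\overline x)^2+J(\overline y)^2)^{1/2}+Q_{\frac12}(\underline x)-2\,\overline J(\tfrac{\underline x+\underline y}{2},\tfrac{\overline x+\overline y}{2})$ followed by the same call to $\textsc{Partition}_2$ on $[\tfrac14,\tfrac12]\times[\tfrac58,1]$. The extra monotonicity justifications you supply are the ones implicitly used in the paper, so the two arguments coincide.
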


\begin{proof}
A tight lower bound is given by
\[ \underline{g_{QJ,2}}(\underline{x},\overline{x},\underline{y},\overline{y}) = ((\underline{y}-\overline{x})^2 + J(\overline{y})^2)^{1/2} + Q_{\frac12}(\underline{x})-2\overline{J}(\tfrac{\underline{x}+\underline{y}}{2},\tfrac{\overline{x}+\overline{y}}{2}). \]
Running $\textsc{Partition}_2(\underline{g_{QJ,2}},[\tfrac14,\tfrac12]\times [\tfrac58,1])$ shows the claim.
\end{proof}

\section{Proof of the Poincar\'e inequality}\label{sec:poincare}

Every Boolean-valued function $f$ can be written as $f=\mathbf{1}_A$ for some $A\subset \{0,1\}^n$.
Then $\mathbf{E} f=|A|=2^{-n}\#A$ and
\[ \|f-\mathbf{E} f\|^p_p = |A|^{p} (1-|A|) + |A| (1-|A|)^{p}. \]
On the other side of the inequality,
\begin{equation}\label{eqn:twosidedgradient}
\|\nabla f\|_p^p = 2^{-p} ( \mathbf{E} h_A^{p/2} + \mathbf{E} h_{A^c}^{p/2}).
\end{equation}
Set $p=2\beta$.
If $\mathbf{E}h_A^{\beta}\ge B(|A|)$ for some function $B$, then
\[\|\nabla f\|_p^p \geq 2^{-2\beta_0}(B(|A|)+B(1-|A|)).\]
Thus Theorem \ref{thm:poincare} follows from \eqref{eqn:actualisoperimbeta0} and \eqref{eqn:actualisoperimhalf} if we show the following.

\begin{prop}\label{prop:poincarepenult}
For all $x\in [0,1]$, $\beta\in [\frac12,\beta_0]$:
\[ G_{P,\beta}(x)= 2^{-2\beta}( b_\beta(x) + b_\beta(1-x)) - x^{2\beta}(1-x) - x(1-x)^{2\beta} \ge 0. \]
\end{prop}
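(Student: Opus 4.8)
The plan is to reduce Proposition~\ref{prop:poincarepenult} to a finite collection of one-variable estimates that can be verified either by elementary calculus or by \textsc{Partition}$_1$. First I would exploit the symmetry $x\leftrightarrow 1-x$: the function $G_{P,\beta}$ is invariant under this reflection, so it suffices to treat $x\in[0,\tfrac12]$. On this range I would split according to the piecewise definition \eqref{eqn:bbdef} of $b_\beta$, so that $b_\beta(x)$ is given by $L_\beta$ on $[0,\tfrac14]$ and by $Q_\beta$ on $[\tfrac14,\tfrac12]$, while $b_\beta(1-x)=J(1-x)$ throughout (since $1-x\in[\tfrac12,1]$). This gives two subintervals to handle.

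Next, I would reduce the two-parameter problem (in $x$ and $\beta\in[\tfrac12,\beta_0]$) to $\beta\in\{\tfrac12,\beta_0\}$ wherever possible. For the terms $x^{2\beta}(1-x)+x(1-x)^{2\beta}$ one checks that for fixed $x\in(0,1)$ this is convex in $\beta$ (its $\beta$-derivative is a sum of terms of the form $2x^{2\beta}(1-x)\log x$, and differentiating again keeps a fixed sign structure on $[0,\tfrac12]$), while the prefactor $2^{-2\beta}$ is log-linear; combined with the monotonicity of $b_\beta$ in $\beta$ from Lemmas~\ref{lem:Lprop}, \ref{lem:Qbprop} (and $J$ not depending on $\beta$), one concludes that for fixed $x$ the quantity $G_{P,\beta}(x)$ is either monotone in $\beta$ or can be bounded below by $\min(G_{P,1/2}(x),G_{P,\beta_0}(x))$. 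Since $\beta_0-\tfrac12$ is tiny, in practice it is cleanest to prove the bound for $\beta=\beta_0$ and $\beta=\tfrac12$ separately, using $c_0=0.997$ from \eqref{eqn:actualisoperimhalf} is \emph{not} needed here because we are working directly with $b_\beta$, not $c_0 b_\beta$ — but note that the conclusion of Proposition~\ref{prop:poincarepenult} for $\beta=\tfrac12$ only yields the weaker constant $0.997$ in Theorem~\ref{thm:poincare}, consistent with the statement.

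For the actual positivity, on each subinterval I would introduce the explicit tight lower bound obtained by combining the enclosure \eqref{eqn:Jenclosure} for $J(1-x)$ (recalling $J$ is decreasing on $[x_0,1]$, hence on the relevant range once $1-x\ge x_0$, with a small exceptional piece near $1-x=\tfrac12$), the monotonicity of $L_\beta$ resp.\ $Q_\beta$ on $[0,\tfrac14]$ resp.\ $[\tfrac14,\tfrac12]$, and the obvious monotonicity of $x^{2\beta}(1-x)+x(1-x)^{2\beta}$; then run \textsc{Partition}$_1$ on $[0,\tfrac14]$ and on $[\tfrac14,\tfrac12]$ for each $\beta\in\{\tfrac12,\beta_0\}$. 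One should check the endpoints by hand: at $x=0$ one has $G_{P,\beta}(0)=2^{-2\beta}(b_\beta(0)+b_\beta(1))=0$, so the inequality is an equality there (corresponding to the trivial function), and similarly $G_{P,\beta}(\tfrac12)=2^{-2\beta}\cdot 2\cdot b_\beta(\tfrac12)-2\cdot(\tfrac12)^{2\beta}\cdot\tfrac12 = 2^{1-2\beta}b_\beta(\tfrac12)-(\tfrac12)^{2\beta}=2^{-2\beta}(2b_\beta(\tfrac12)-1)=0$ since $b_\beta(\tfrac12)=\tfrac12$; this is the equality case $A$ a half-cube claimed in Theorem~\ref{thm:poincare}. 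The main obstacle is the behaviour near $x=0$: there $G_{P,\beta}(x)\sim 2^{-2\beta}L_\beta(x)\sim 2^{-2\beta}x(\log_2(1/x))^\beta$ (the $J(1-x)$ term contributes $J(1-x)=J(1)+O(x^2)\cdot$-type behaviour, actually $\sim$ const$\cdot x\sqrt{\log(1/x)}$ by Corollary~\ref{prop:Jlowerbd} — in fact the $b_\beta(1-x)=J(1-x)$ term is the \emph{dominant} one as $x\to0^+$ since $J(1-x)\sim 2x\sqrt{\log(w_0/x)}$ by \eqref{eqn:Jlowerbd}, which beats the $x^{2\beta}(1-x)$ term) so positivity near $0$ is comfortable but requires the quantitative lower bound \eqref{eqn:Jlowerbd} rather than a crude enclosure; \textsc{Partition}$_1$ alone would not reach $x=0$ because of the logarithmic singularity in the derivative of $L_\beta$, so I would peel off a tiny interval $[0,\varepsilon_0]$ and there use $b_\beta(1-x)\ge J(1-x)\ge x\sqrt{\log(w_0/x)}$ together with $x^{2\beta}(1-x)+x(1-x)^{2\beta}\le x^{2\beta}+x\le 2x$ (valid on $[0,\tfrac12]$ since $2\beta\ge1$) to get $G_{P,\beta}(x)\ge 2^{-2\beta}x\sqrt{\log(w_0/x)}-2x\cdot(\text{const})$, wait — the second term also carries $2^{-2\beta}$? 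No: re-examining, $G_{P,\beta}(x)=2^{-2\beta}(b_\beta(x)+b_\beta(1-x))-(x^{2\beta}(1-x)+x(1-x)^{2\beta})$, so on $[0,\varepsilon_0]$ one needs $2^{-2\beta}\cdot x\sqrt{\log(w_0/x)}\ge x^{2\beta}(1-x)+x(1-x)^{2\beta}$, and since the right side is $\le x^{2\beta}+x(1-x)^{2\beta}\le x^{2\beta-1}\cdot x + x = x(1+x^{2\beta-1})\le 2x$ (for $x\le\tfrac12$, using $2\beta-1\ge0$), and $x\sqrt{\log(w_0/x)}/x=\sqrt{\log(w_0/x)}\to\infty$, this holds once $\varepsilon_0$ is small enough; choosing e.g.\ $\varepsilon_0=2^{-20}$ and checking $2^{-2\beta_0}\sqrt{\log(w_0 2^{20})}>2$ numerically completes that piece. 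On the complementary compact intervals $[\varepsilon_0,\tfrac14]$ and $[\tfrac14,\tfrac12]$, \textsc{Partition}$_1$ with the tight lower bound described above finishes the proof.
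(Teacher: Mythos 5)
Your overall plan — symmetry to $[0,\tfrac12]$, splitting according to the pieces of $b_\beta$, handling a neighborhood of $x=0$ analytically, and finishing the rest with $\textsc{Partition}_1$ — is close to the paper's strategy, but there are three concrete gaps.

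\textbf{The partition on $[\tfrac14,\tfrac12]$ cannot work as stated.} You observe, correctly, that $G_{P,\beta}(\tfrac12)=0$. But $\textsc{Partition}_1$ can only certify \emph{strict} positivity (Fact~\ref{fact:admissiblepartition}), so it necessarily fails on any interval that contains a zero of the function. This is exactly the same obstruction you already identified at $x=0$, and it recurs at $x=\tfrac12$. The paper's Case~III avoids it by instead proving $-\partial_x G_{P,\beta}(x)=g_{P,3,\beta}(x)>0$ on $[\tfrac14,\tfrac12]$ via $\textsc{Partition}_1$, and then using $G_{P,\beta}(\tfrac12)=0$ plus monotonicity. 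You would need a similar derivative-sign argument (or a factorization that removes the zero) near $x=\tfrac12$.

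\textbf{The numerics near $x=0$ do not close.} After dropping the $L_\beta(x)$ contribution, your bound requires $2^{-2\beta_0}\sqrt{\log(w_0/\varepsilon_0)}\ge 2$. At $\varepsilon_0=2^{-20}$ one has $\log(w_0\cdot 2^{20})\approx 13.75$, so $2^{-2\beta_0}\sqrt{\log(w_0\cdot 2^{20})}\approx 0.4996\cdot 3.71\approx 1.85<2$, and the check fails. Keeping both $L_\beta(x)$ and $J(1-x)$ (as the paper does: $G_{P,\beta}(x)\ge x\bigl(2^{-2\beta_0}\sqrt{\log_2(1/x)}+2^{-2\beta_0}\sqrt{\log(w_0/x)}-2\bigr)$) makes the cutoff $1/64$ suffice. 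If you insist on using only $J$, you need $\varepsilon_0\lesssim 2^{-24}$, which in turn pushes $\textsc{Partition}_1$ on $[\varepsilon_0,\tfrac14]$ to depth around $20$ because $G_{P,\beta}(\varepsilon_0)=O(\varepsilon_0)$ — well beyond $maxDepth=12$. Factoring out $x$ first (as the paper effectively does by working with $g_{P,1}$) sidesteps this.

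\textbf{The reduction of $\beta\in[\tfrac12,\beta_0]$ to $\beta\in\{\tfrac12,\beta_0\}$ is not justified.} For it you would need $G_{P,\beta}(x)$ to be concave (or at least quasi-concave) in $\beta$, but the first term works against you: on $[0,\tfrac14]$ one has $2^{-2\beta}L_\beta(x)=x\bigl(\tfrac14\log_2(1/x)\bigr)^{\beta}$ and $2^{-2\beta}J(1-x)$, both exponential hence \emph{convex} in $\beta$, while the subtracted term $x^{2\beta}(1-x)+x(1-x)^{2\beta}$ is also convex in $\beta$ and enters with a minus sign. The sum is neither convex nor concave, so taking the min over the two endpoint values of $\beta$ is not a valid lower bound. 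The paper instead replaces each term by a $\beta$-uniform bound (e.g.\ $2^{-2\beta}\ge 2^{-2\beta_0}$, $L_\beta\ge L_{1/2}$, $Q'_\beta\le Q'_{1/2}$, $x^{2\beta}(1-x)+x(1-x)^{2\beta}\le 2x(1-x)$), which is cleaner and what you should do as well.

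Your side remarks — that the $c_0=0.997$ loss enters only through \eqref{eqn:actualisoperimhalf} and not through this proposition, and that the equality at $x=\tfrac12$ corresponds to the half-cube — are correct.
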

\begin{rem}
The conclusion holds for all $\beta\in[\tfrac12,1]$ and this can be proved by the same methods.
\end{rem}

\begin{proof}[Proof of Proposition \ref{prop:poincarepenult}]
Since $G_{P,\beta}(x)=G_{P,\beta}(1-x)$ it suffices to show this for $x\in [0,\tfrac12]$.
Notice that $G_{P,\beta}$ vanishes at $x=0,\tfrac12,1$.

\subsubsection*{Case I: $x\in [0,\tfrac1{64}]$}
We need to show
\[2^{-2\beta}( L_\beta(x) + J(1-x)) - x^{2\beta}(1-x) - x(1-x)^{2\beta} \ge 0\]
It is clear that this inequality holds asymptotically as $x\to 0^+$ and this is not sensitive to the value of $\beta$.
Using $\beta\in[\frac12,\beta_0]$ and Corollary \ref{prop:Jlowerbd}, the left hand side is
\begin{equation*}
\ge 2^{-2\beta_0}(L_{\frac12}(x) + J(1-x)) - 2x(1-x)
\end{equation*}
\[ \ge x (2^{-2\beta_0} \sqrt{\log_2(1/x)} + 2^{-2\beta_0} \sqrt{\log(w_0/x)}- 2) = x\cdot g_{P,1}(x), \]
Notice that $g_{P,1}$ is a decreasing function of $x$ and one can evaluate
\begin{equation}\label{eqn:g_P_1_auto}
g_{P,1}(\tfrac1{64})>0.2
\end{equation}
Thus, $G_{P,\beta}(x)>0.2\, x$ for all $x\in [0,\tfrac1{64}]$ and $\beta\in[\frac12,\beta_0]$.

\subsubsection*{Case II: $x\in [\tfrac1{64},\tfrac14]$}
Note that $x\mapsto J(1-x)$ is increasing on $x\in [\frac1{64},\frac14]$ because $x_0<1-\tfrac14$.
Thus a tight lower bound for $G_{P,\beta}(x)$ is given by
\[\underline{g_{P,2}}(\underline{x},\overline{x})= 2^{-2\beta_0}(L_{\frac12}(\underline{x}) + J(1-\underline{x})) - 2\overline{x}(1-\underline{x}).\]
Running $\textsc{Partition}_1(\underline{g_{P,2}},[\tfrac1{64},\tfrac14])$ shows that
\begin{equation}\label{eqn:g_P_2_auto}\auto
G_{P,\beta}(x)>10^{-4}
\end{equation}
for all $x\in [\frac1{64},\frac14]$ and $\beta\in[\frac12,\beta_0]$.

\subsubsection*{Case III: $x\in [\tfrac14,\tfrac12]$}
We need to show
\[2^{-2\beta}( Q_\beta(x) + J(1-x)) - x^{2\beta}(1-x) - x(1-x)^{2\beta} \ge 0.\]
Since the left-hand side equals $0$ at $x=\tfrac12$ it will suffice to show that the function on the left is decreasing in $x$, i.e. that
\[ g_{P,3,\beta}(x)=-\partial_x G_{P,\beta}(x)>0\]
for all $x\in [\frac14,\frac12]$, $\beta\in[\frac12,\beta_0]$.
Calculate
\[ g_{P,3,\beta}(x)=-2^{-2\beta} Q'_\beta(x)+2^{-2\beta} J'(1-x) + 2\beta x^{2\beta-1} (1-x) - x^{2\beta} \]
\[ + (1-x)^{2\beta} - 2\beta x(1-x)^{2\beta-1}. \]
$J'(1-x)$ is decreasing for $x\in [0, \tfrac12]$, positive if $1-x<x_0$ and negative if $1-x>x_0$.
Also,
$Q'_\beta(x)$ is decreasing in $x$, positive and decreasing in $\beta$ by Lemma \ref{lem:Qbprop}.
A tight lower bound for $g_{P,3,\beta}$ is therefore given by
\[ \underline{g_{P,3}}(\underline{x},\overline{x}) = - 2^{-1} Q'_{\frac12}(\underline{x}) + 2^{-2\beta_0} J'(1-\underline{x}) \mathbf{1}_{1-\underline{x}<x_0} \]
\[- 2^{-1}\overline{|J'|}(1-\overline{x},1-\underline{x})\mathbf{1}_{\text{not}\;1-\underline{x}<x_0} \]
\[ + \underline{x}^{2\beta_0-1}(1-\overline{x}) - \overline{x} + (1-\overline{x})^{2\beta_0}-2\beta_0\,\overline{x} \]
Running $\textsc{Partition}_1(\underline{g_{P,3}},[\frac14,\frac12])$ shows
\begin{equation}\label{eqn:g_P_3_auto}\auto
g_{P,3}(x) > 0.001,
\end{equation}
for all $x\in[\frac14,\frac12]$ which finishes the proof of Proposition \ref{prop:poincarepenult}.
\end{proof}

\end{document}